\newtheorem{theorem}{Theorem}[section]
\newtheorem{remark}{Remark}[section]
\newtheorem{lemma}[theorem]{Lemma}
\newenvironment{proof}[1][Proof]{\noindent\textbf{#1.} }{\ \rule{0.5em}{0.5em}}
\begin{document}

\title{On indefinite Kirchhoff-type equations under the combined effect of
linear and superlinear terms}
\date{}
\author{Juntao Sun$^{a}$\thanks{%
E-mail address: jtsun@sdut.edu.cn(J. Sun)}, Kuan-Hsiang Wang$^{b}$\thanks{%
E-mail address: khwang0511@gmail.com (K.-H. Wang)}, Tsung-fang Wu$^{b}$%
\thanks{%
E-mail address: tfwu@nuk.edu.tw (T.-F. Wu)} \\
{\footnotesize $^a$\emph{School of Mathematics and Statistics, Shandong
University of Technology, Zibo, 255049, P.R. China }}\\
{\footnotesize $^{b}$\emph{Department of Applied Mathematics, National
University of Kaohsiung, Kaohsiung 811, Taiwan }}}
\maketitle

\begin{abstract}
We investigate a class of Kirchhoff type equations involving a combination
of linear and superlinear terms as follows:
\begin{equation*}
-\left( a\int_{\mathbb{R}^{N}}|\nabla u|^{2}dx+1\right) \Delta u+\mu
V(x)u=\lambda f(x)u+g(x)|u|^{p-2}u\quad \text{ in }\mathbb{R}^{N},
\end{equation*}%
where $N\geq 3,2<p<2^{\ast }:=\frac{2N}{N-2}$, $V\in C(\mathbb{R}^{N})$ is a
potential well with the bottom $\Omega :=int\{x\in \mathbb{R}^{N}\ |\
V(x)=0\}$. When $N=3$ and $4<p<6$, for each $a>0$ and $\mu $ sufficiently
large, we obtain that at least one positive solution exists for $%
0<\lambda\leq\lambda _{1}(f_{\Omega}) $ while at least two positive
solutions exist for $\lambda _{1}(f_{\Omega })< \lambda<\lambda
_{1}(f_{\Omega})+\delta_{a}$ without any assumption on the integral $%
\int_{\Omega }g(x)\phi _{1}^{p}dx$, where $\lambda _{1}(f_{\Omega })>0$ is
the principal eigenvalue of $-\Delta $ in $H_{0}^{1}(\Omega )$ with weight
function $f_{\Omega }:=f|_{\Omega }$, and $\phi _{1}>0$ is the corresponding
principal eigenfunction. When $N\geq 3$ and $2<p<\min \{4,2^{\ast }\}$, for $%
\mu $ sufficiently large, we conclude that $(i)$ at least two positive
solutions exist for $a>0$ small and $0<\lambda <\lambda _{1}(f_{\Omega })$; $%
(ii)$ under the classical assumption $\int_{\Omega }g(x)\phi _{1}^{p}dx<0$,
at least three positive solutions exist for $a>0$ small and $\lambda
_{1}(f_{\Omega })\leq \lambda<\lambda _{1}(f_{\Omega})+\overline{\delta }%
_{a} $; $(iii)$ under the assumption $\int_{\Omega }g(x)\phi _{1}^{p}dx>0$,
at least two positive solutions exist for $a>a_{0}(p)$ and $\lambda^{+}_{a}<
\lambda<\lambda _{1}(f_{\Omega})$ for some $a_{0}(p)>0$ and $\lambda^{+}_{a}\geq0$.
\end{abstract}

\noindent \textbf{Keywords:} Kirchhoff type problem, steep potential well,
eigenvalue problem, mountain pass theory.

\section{Introduction}

In this paper, we investigate the following Kirchhoff type equation:
\begin{equation}
\left\{
\begin{array}{ll}
-\left( a\int_{\mathbb{R}^{N}}|\nabla u|^{2}dx+b\right) \Delta u+\mu
V(x)u=h(x,u) & \text{ in }\mathbb{R}^{N}, \\
u\in H^{1}(\mathbb{R}^{N}), &
\end{array}%
\right.  \label{1-1}
\end{equation}%
where $N\geq 3$, the parameters $a,b,\mu >0$, and the potential $V$
satisfies the following conditions:

\begin{itemize}
\item[$(V1)$] $V\in C(\mathbb{R}^{N})$ with $V(x)\geq 0$ in $\mathbb{R}^{N}$
and there exists $c_{0}>0$ such that the set $\{V<c_{0}\}:=\{x\in \mathbb{R}%
^{N}\ |\ V(x)<c_{0}\}$ is nonempty and has finite positive measure.

\item[$(V2)$] $\Omega :=int\{x\in \mathbb{R}^{N}\ |\ V(x)=0\}$ is nonempty
bounded domain and has a smooth boundary with%
\begin{equation*}
\overline{\Omega }=\{x\in \mathbb{R}^{N}\ |\ V(x)=0\}.
\end{equation*}
\end{itemize}

The Kirchhoff type equation is an extension of the classical D'Alembert wave
equation, namely,
\begin{equation}
u_{tt}-M\left( \int_{D}|\nabla u|^{2}dx\right) \Delta u=h(x,u),  \label{1-3}
\end{equation}%
which was proposed by Kirchhoff \cite{K} to describe the transversal
oscillations of a stretched string, taking into account of the effect of
changes in string length during the vibrations. Here $D$ is a bounded
domain, $u$ denotes the displacement and $h$ is the external force. In
particular, if $M(t)=at+b$, then $b$ denotes the initial tension while $a$
is related to the intrinsic properties of the string (such as Young's
modulus). It is notable that Eq. $(\ref{1-3})$ is often referred to as being
non-local because of the presence of the integral over the domain $D$. About
the solvability of Eq. $(\ref{1-3}),$ we refer to the reader to the papers
\cite{AP,D'A-S,L,P}.

In recent years, the stationary analogue of Eq. $(\ref{1-3})$ with specific
formulations of $M$ and $h$ has been widely studied in bounded domain \cite%
{CKW,D,JS,LLS,SW2,W1} and in unbounded domain \cite%
{AF,C,DPS,FIJ,G,HZ,I,JL,LY,SCWF,SW1,SW3,TC,ZD,ZLW} via variational methods.
Now let us briefly comment some known results releted to our work.

Sun and Wu \cite{SW1}, the first and third authors of the current paper,
studied the existence of ground state solution for Eq. $(\ref{1-1})$ with $%
N=3,$ where $V$ satisfies conditions $(V1)-(V2)$ and the nonlinearity $h$ is
required to be asymptotically linear, asymptotically $3$-linear and
asymptotically $4$-linear at infinity on $u,$ respectively. The proof is
based on mountain pass theorem and the Nehari manifold method. It is worth
noting that the potential $\mu V,$ first introduced by Bartsch and Wang \cite%
{BW}, is usually called the steep potential well whose depth is controlled
by the parameter $\mu $. Later, the corresponding results were further
extended and improved by Jia and Luo \cite{JL} and Zhang and Du \cite{ZD}.

Recently, Sun et al. \cite{SCWF} considered the case of $N\geq 4.$ They
found that when $h$ is superlinear and subcritical on $u$, the geometric
structure of the functional $J$ related to Eq. $(\ref{1-1})$ is known to
have a global minimum and a mountain pass, due to the forth power of the
non-local term. As a result, two positive solutions of Eq. $(\ref{1-1})$ can
be found. After that, Sun and Wu \cite{SW3} showed that when $N=3$ and $%
h(x,u)=g(x)|u|^{p-2}u$ with $2<p<4$, the functional $J$ related to Eq. $(\ref%
{1-1})$ also has a global minimum and a mountain pass. However, an
additional assumption on $V$ and $g$ needs to be required as follows:\newline
$\left( H3\right) $ There exist two numbers $c_{\ast },R_{\ast }>0$ such that%
\begin{equation*}
\left\vert x\right\vert ^{p-2}g(x)\leq c_{\ast }\left[ V\left( x\right) %
\right] ^{4-p}\text{ for all }\left\vert x\right\vert >R_{\ast }.
\end{equation*}

We notice that there seems to be rarely concerned on Kirchhoff type equation
involving a combination of linear and superlinear terms in the existing
literature. We are only aware of the work \cite{ZLW}. Actually, the combined
effect of linear and superlinear terms was first studied by Alama and
Tarantello \cite{AT} in the following indefinite semilinear elliptic
equations in bounded domain:

\begin{equation*}
\left\{
\begin{array}{ll}
-\Delta u=\lambda u+a(x)h(u) & \quad \text{in }D\text{$,$} \\
u=0 & \quad \text{on $\partial D,$}%
\end{array}%
\right.
\end{equation*}%
where $a\in C(\overline{D})$ changes sign in $D$ and $h$ is a nonlinear
function with superquadratic growth both at zero and at infinity. They
concluded that for $\lambda $ in a small right neighborhood of $\lambda _{1}$%
, the first eigenvalue of $-\Delta $ in $H_{0}^{1}(D)$, the condition $%
\int_{D}a(x)\phi _{1}^{p}dx<0$ is necessary and sufficient for existence of
a positive solution, where $\phi _{1}>0$ is is the corresponding principal
eigenfunction. Furthermore, the existence of two positive solutions for $%
\lambda \in (\lambda _{1},\gamma )$, for some $\gamma >\lambda _{1},$ is
also established in \cite{AT}. For more similar results, we refer the reader
to \cite{AG,BZ,CT,HRC}.

Very recently, Zhang et al. \cite{ZLW} extended the analysis to Kirchhoff
type equation with a combination of linear and superlinear terms, namely,
Eq. $(\ref{1-1})$ with $h(x,u)=\lambda f(x)u+g(x)|u|^{p-2}u,$ where $%
2<p<2^{\ast }$ and $f,g$ are both sign-changing in $\mathbb{R}^{N}$. They
illustrated the difference in the solution behavior which arises from the
consideration of the nonlocal and eigenvalue problem effects. By using the
Nehari manifold method and giving an approximation estimate of eigenvalue
problem, they explored the existence and multiplicity of positive solutions
when $\lambda $ lies in the left and right neighborhood of $\lambda
_{1}(f_{\Omega })$, respectively, where $\lambda _{1}(f_{\Omega })$ is the
positive principal eigenvalue of the problem%
\begin{equation}
\left\{
\begin{array}{ll}
-\Delta u=\lambda f_{\Omega }(x)u & \quad \text{in $\Omega ,$} \\
u=0 & \quad \text{on $\partial \Omega ,$}%
\end{array}%
\right.  \label{eb}
\end{equation}%
with $\phi _{1}$ the corresponding positive principal eigenfunction, here $%
\Omega $ is given in condition $(V2)$ and the function $f_{\Omega }$ is a
restriction of $f$ on $\Omega $. Some of the results obtained in \cite{ZLW}
are summarized as follows (also see Figure 1):

\begin{figure}[H]
\centering
\subfigure[$N=3, 4<p<6$ and $\int_{\Omega }g(x)\phi _{1}^{p}dx<0$]{
        \includegraphics[width=2.5in,height=1.6in]{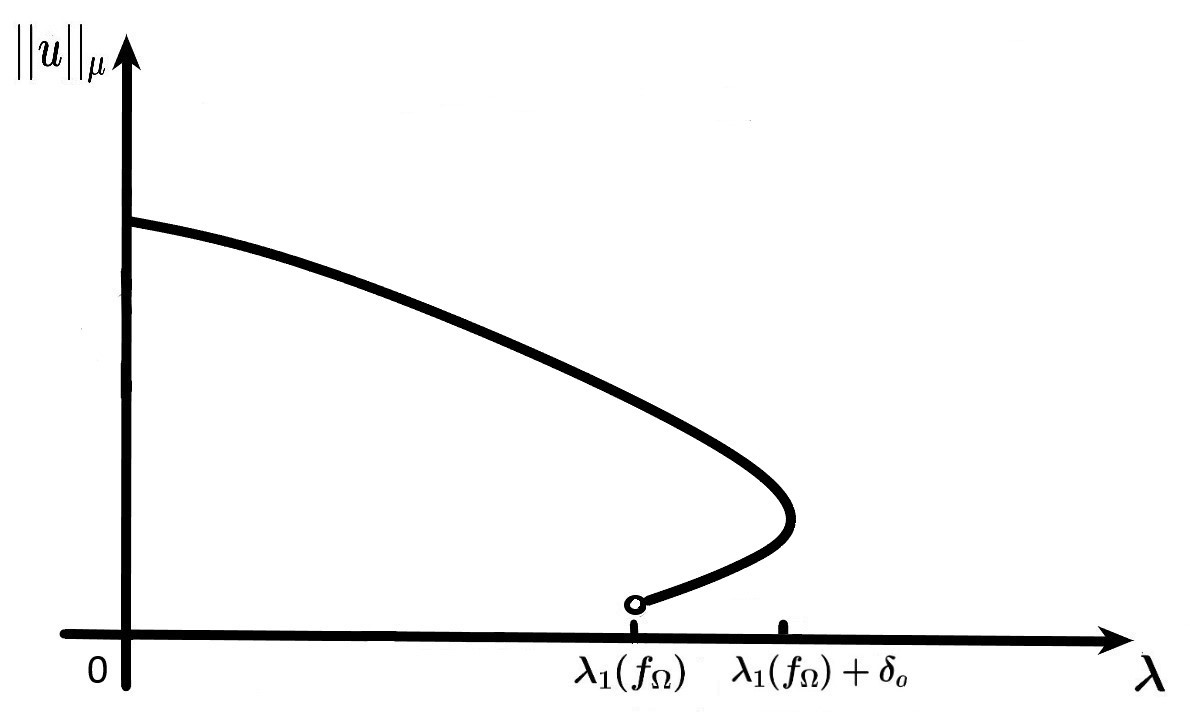}}
\subfigure[$N\geq3$ and $2<p<\min \{4,2^{\ast }\}$]{
        \includegraphics[width=2.5in,height=1.6in]{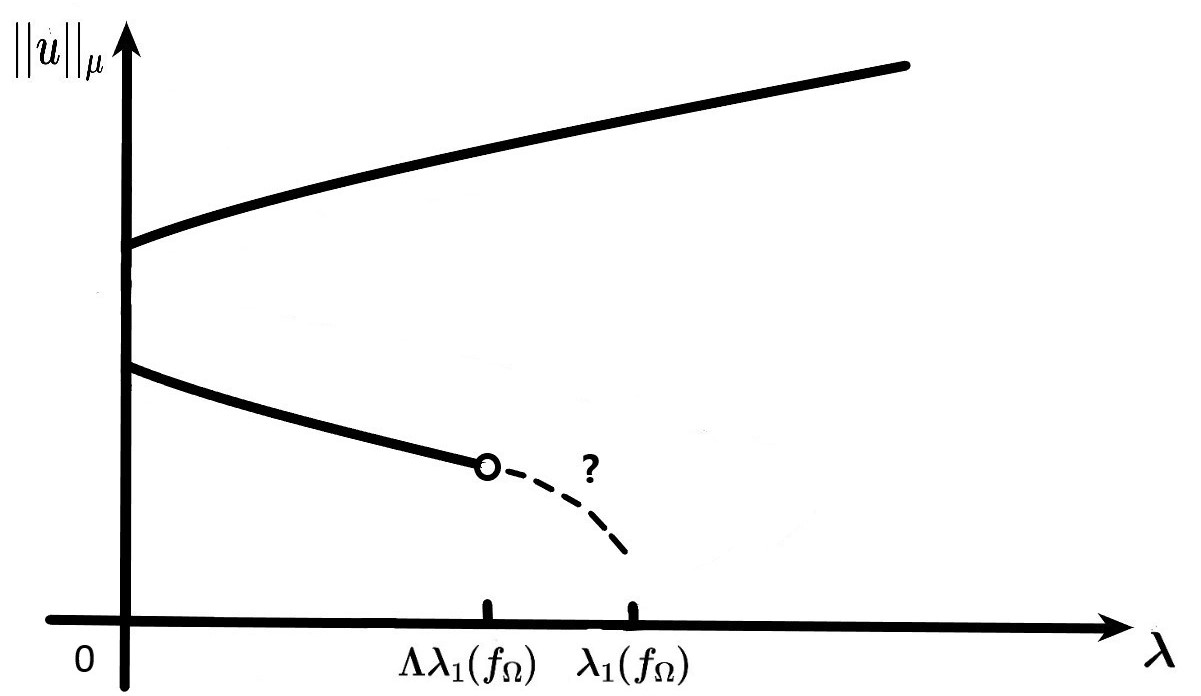}}
\caption{Bifurcation diagrams for result $(i)$ on (a) and for results $%
(ii)-(iii)$ on (b).}
\end{figure}

\begin{itemize}
\item[$(i)$] $N=3$ and $4<p<6:$ if $\int_{\Omega }g(x)\phi _{1}^{p}dx<0$,
then for each $a>0$ there exists a number $\delta _{0}>0$ such that for each
$\lambda _{1}(f_{\Omega })<\lambda <\lambda _{1}(f_{\Omega })+\delta _{0}$,
Eq. $(\ref{1-1})$ admits at least two positive solutions for $\mu $
sufficiently large;

\item[$(ii)$] $N\geq 3$ and $2<p<\min \{4,2^{\ast }\}:$ there exists a
number $a_{0}>0$ such that for each $0<a<a_{0}$ and $0<\lambda <\Lambda
\lambda _{1}(f_{\Omega })$, Eq. $(\ref{1-1})$ admits at least two positive
solutions for $\mu $ sufficiently large, where
\begin{equation*}
\Lambda :=1-2\left( \frac{4-p}{4}\right) ^{2/p}<1\text{ for }2<p<\min
\{4,2^{\ast }\};
\end{equation*}

\item[$(iii)$] $N\geq 3$ and $2<p<\min \{4,2^{\ast }\}:$ there exists a
number $a_{0}>0$ such that for each $0<a<a_{0}$ and $\lambda \geq \Lambda
\lambda _{1}(f_{\Omega })$, Eq. $(\ref{1-1})$ admits at least a positive
solution for $\mu $ sufficiently large.
\end{itemize}

From the results mentioned above, it is very natural for us to raise a
series of interesting questions, such as the following

\begin{itemize}
\item[$(I)$] The condition $\int_{\Omega }g(x)\phi _{1}^{p}dx<0$ appears
necessary in finding two positive solutions when $N=3$ and $4<p<6$ in \cite%
{ZLW}, as well as in the study of local elliptic equations \cite{AT,AG,BZ,CT}%
. Can one obtain the same result as described in \cite{ZLW} without this
condition?

\item[$(II)$] When $N\geq 3$ and $2<p<\min \{4,2^{\ast }\}$ in \cite{ZLW},
the existence of two positive solutions is established only in the range of $%
0<\lambda <\Lambda \lambda _{1}(f_{\Omega }),$ while not including the range
of $\Lambda \lambda _{1}(f_{\Omega })\leq \lambda \leq\lambda _{1}(f_{\Omega
}). $ In view of this, we wonder if two positive solutions can be found when
$\Lambda \lambda _{1}(f_{\Omega })\leq \lambda \leq\lambda _{1}(f_{\Omega
}), $ like that in the case of $0<\lambda <\Lambda \lambda _{1}(f_{\Omega }).
$

\item[$(III)$] It is notable that Zhang et al. \cite{ZLW} only found one
positive solution when $\lambda \geq \Lambda \lambda _{1}(f_{\Omega })$ for $%
N\geq 3$ and $2<p<\min \{4,2^{\ast }\}.$ In other words, they can not
conclude that $\lambda =\lambda _{1}(f_{\Omega })$ is a bifurcation point of
Eq. $(\ref{1-1})$ with positive solutions bifurcating to the right of $%
\lambda _{1}(f_{\Omega }).$ Based on $(II)$, we would like to further probe
into whether there exists a bifurcation phenomenon at the point $\lambda
=\lambda _{1}(f_{\Omega }).$
\end{itemize}

In the present paper, we are very interested in seeking definite answers to
Questions $(I)-(III)$ and establishing the multiplicity of positive
solutions for Eq. $(\ref{1-1})$ with $h(x,u)=\lambda f(x)u+g(x)|u|^{p-2}u$
by using the mountain pass theory and the direct sum decomposition of the
function. Here we wish to point out that the Nehari manifold method used in
\cite{ZLW} is not a good choice in our study. Indeed, for $N=3$ and $4<p<6$,
the condition $\int_{\Omega }g(x)\phi _{1}^{p}dx<0$ is used to ensure that
the Nehari manifold is a natural constraint and can be decomposed into two
nonempty submanifolds. Moreover, for $N\geq 3$ and $2<p<\min \{4,2^{\ast }\}$%
, the filtration of the Nehari manifold is adopted to derive the boundedness
of (PS)-sequence, which is available only for $0<\lambda <\Lambda \lambda
_{1}(f_{\Omega })$. For simplicity, we always assume that $b=1$ in Eq. $(\ref%
{1-1}).$ The problem we consider is thus
\begin{equation}
\left\{
\begin{array}{ll}
-\left( a\int_{\mathbb{R}^{N}}|\nabla u|^{2}dx+1\right) \Delta u+\mu
V(x)u=\lambda f(x)u+g(x)|u|^{p-2}u & \text{ in }\mathbb{R}^{N}, \\
u\in H^{1}(\mathbb{R}^{N}), &
\end{array}%
\right.  \tag{$K_{a,\lambda }^{\mu }$}
\end{equation}%
where $N\geq 3,2<p<2^{\ast }$, the parameters $a,\mu ,\lambda >0$, the
potential $V$ satisfies conditions $(V1)-(V2)$, and the weight functions $%
f,g $ satisfy the following conditions:

\begin{itemize}
\item[$( D1) $] $f\in L^{N/2}(\mathbb{R}^{N})\cap L^{\infty }(\Omega )$ and $%
f^{+}:=\max \{f,0\}\not\equiv 0$ in $\Omega ;$

\item[$(D2) $] $g\in L^{\infty }(\mathbb{R}^{N})$ and $g^{+}\not\equiv 0$ in
$\Omega .$
\end{itemize}

\begin{remark}
\label{R1.1}If $f$ is bounded in $\Omega $ and $|\{x\in \Omega \ |\
f(x)>0\}|>0$, then there exists a sequence of eigenvalues $\{\lambda
_{n}(f_{\Omega })\}$ of Eq. $(\ref{eb})$ with $0<\lambda _{1}(f_{\Omega
})<\lambda _{2}(f_{\Omega })\leq \cdots $ and each eigenvalue being of
finite multiplicity. Denoting the positive principal eigenfunction by $\phi
_{1}$, we have
\begin{equation*}
\lambda _{1}(f_{\Omega })=\int_{\Omega }|\nabla \phi _{1}|^{2}dx=\inf
\left\{ \int_{\Omega }|\nabla u|^{2}dx\ |\ u\in H_{0}^{1}(\Omega
),\int_{\Omega }f_{\Omega }(x)u^{2}dx=1\right\}
\end{equation*}%
and
\begin{equation*}
\lambda _{2}(f_{\Omega })=\inf \left\{ \int_{\Omega }|\nabla u|^{2}dx\ |\
u\in H_{0}^{1}(\Omega ),\int_{\Omega }f_{\Omega }(x)u^{2}dx=1,\int_{\Omega
}\nabla u\nabla \phi _{1}dx=0\right\} .
\end{equation*}
\end{remark}

We now summarize our main results as follows.

\begin{theorem}
\label{T1} Suppose that $N=3,4<p<6$ and conditions $(V1)-(V2),(D1)-(D2)$
hold. Then for each $a>0$, the following statements are true.\newline
$(i)$ For each $0<\lambda \leq \lambda _{1}(f_{\Omega })$, Eq. $%
(K_{a,\lambda }^{\mu })$ has at least one positive solution for $\mu $
sufficiently large.\newline
$(ii)$ There exists a number $\delta _{a}>0$ such that for every $\lambda
_{1}(f_{\Omega })<\lambda <\lambda _{1}(f_{\Omega })+\delta _{a}$, Eq. $%
(K_{a,\lambda }^{\mu })$ has at least two positive solutions for $\mu $
sufficiently large.
\end{theorem}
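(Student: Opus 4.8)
The plan is to realise all solutions as critical points of the energy functional
\[
J_{a,\lambda}^{\mu}(u)=\frac{a}{4}\Big(\int_{\mathbb{R}^{N}}|\nabla u|^{2}dx\Big)^{2}+\frac{1}{2}\|u\|_{\mu}^{2}-\frac{\lambda}{2}\int_{\mathbb{R}^{N}}fu^{2}dx-\frac{1}{p}\int_{\mathbb{R}^{N}}g|u|^{p}dx
\]
on $E_{\mu}:=\{u\in H^{1}(\mathbb{R}^{N}):\int_{\mathbb{R}^{N}}\mu Vu^{2}dx<\infty\}$, $\|u\|_{\mu}^{2}:=\int_{\mathbb{R}^{N}}(|\nabla u|^{2}+\mu Vu^{2})dx$, with $g|u|^{p-2}u$ replaced by $g(u^{+})^{p-1}$ so that a strong-maximum-principle argument promotes any nontrivial critical point to a positive solution. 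Throughout I use the following facts, available from the preliminary analysis or immediate from $(V1)$, $(D1)$, $(D2)$ and Sobolev: for $\mu$ large the embedding $E_{\mu}\hookrightarrow L^{q}(\mathbb{R}^{N})$ is compact for $2\le q<2^{\ast}$, so in particular $u\mapsto\int_{\mathbb{R}^{N}}g|u|^{p}dx$ is weakly continuous; $u\mapsto\int_{\mathbb{R}^{N}}fu^{2}dx$ is weakly continuous on $E_{\mu}$ for every $\mu$ (as $f\in L^{N/2}$); the first constrained eigenvalue $\lambda_{1}(\mu)$ of $\|\cdot\|_{\mu}^{2}$ with weight $f$ increases to $\lambda_{1}(f_{\Omega})$ as $\mu\to\infty$, whence $\|u\|_{\mu}^{2}-\lambda\int fu^{2}\ge(1-\lambda/\lambda_{1}(\mu))\|u\|_{\mu}^{2}$ for all $u$; and $\|u\|_{L^{2}}\le C\|u\|_{\mu}$, $|\int_{\mathbb{R}^{N}}g|u|^{p}dx|\le C\|u\|_{\mu}^{p}$ with $C$ independent of $\mu\ge1$.

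For Part $(i)$, fix $0<\lambda\le\lambda_{1}(f_{\Omega})$ and take $\mu$ large. By $(D2)$ there is $\psi\in H_{0}^{1}(\Omega)\subset E_{\mu}$ with $\int g|\psi|^{p}>0$, so, as $p>4$, $J_{a,\lambda}^{\mu}(t\psi)\to-\infty$ as $t\to+\infty$. Fix $\rho>0$ small so that $J_{a,\lambda}^{\mu}$ is bounded below on $\overline{B}_{\rho}$ and set $m:=\inf_{\overline{B}_{\rho}}J_{a,\lambda}^{\mu}\le0$. If $m<0$ (which happens once $\lambda>\lambda_{1}(\mu)$, where $\|u\|_{\mu}^{2}-\lambda\int fu^{2}$ takes negative values), then, since $\tfrac{a}{4}\|\nabla\cdot\|^{4}+\tfrac12\|\cdot\|_{\mu}^{2}$ is weakly lower semicontinuous while $\int fu^{2}$ and, via the compact embedding, $\int g|u|^{p}$ are weakly continuous, the infimum over the weakly compact ball $\overline{B}_{\rho}$ is attained at an interior point, a nontrivial critical point. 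If $m=0$, then $0$ is a local minimum and $J_{a,\lambda}^{\mu}\ge\alpha>0$ on a small sphere (from $\|u\|_{\mu}^{2}-\lambda\int fu^{2}\ge(1-\lambda/\lambda_{1}(\mu))\|u\|_{\mu}^{2}$ and, when this coefficient vanishes, the same $\|\nabla u\|_{L^{2}}$-splitting used in Part $(ii)$), so the mountain--pass theorem gives a critical point at level $\ge\alpha>0$, again nontrivial. In either case the Palais--Smale condition at the relevant level follows from $J_{a,\lambda}^{\mu}(u_{n})-\tfrac1p\langle(J_{a,\lambda}^{\mu})'(u_{n}),u_{n}\rangle$, in which $p>4$ makes both $a\|\nabla u_{n}\|^{4}$ and $\|u_{n}\|_{\mu}^{2}-\lambda\int fu_{n}^{2}$ appear with positive coefficients, forcing $\{u_{n}\}$ bounded; then the compact embedding kills the lower-order terms, $\|\nabla u_{n}\|^{2}\to t_{0}\ge0$, and strong convergence is recovered, treating $t_{0}=0$ and $t_{0}>0$ separately (for the latter using the limiting equation with coefficient $at_{0}+1$).

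For Part $(ii)$, let $\lambda_{1}(f_{\Omega})<\lambda<\lambda_{1}(f_{\Omega})+\delta_{a}$ with $\delta_{a}>0$ to be fixed, and $\mu$ large. Normalising $\phi_{1}$ so that $\|\nabla\phi_{1}\|^{2}=\lambda_{1}(f_{\Omega})=\|\phi_{1}\|_{\mu}^{2}$ and $\int f\phi_{1}^{2}=1$,
\[
J_{a,\lambda}^{\mu}(t\phi_{1})=\frac{a\lambda_{1}(f_{\Omega})^{2}}{4}\,t^{4}-\frac{\lambda-\lambda_{1}(f_{\Omega})}{2}\,t^{2}-\frac{1}{p}\Big(\int_{\Omega}g\phi_{1}^{p}dx\Big)t^{p}<0
\]
for small $t>0$, \emph{irrespective of the sign of} $\int_{\Omega}g\phi_{1}^{p}dx$. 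The heart of the matter is a barrier estimate: for $\rho^{\ast}>0$ small (depending on $a$) and $\delta_{a}$ then small relative to $a(\rho^{\ast})^{2}$, $J_{a,\lambda}^{\mu}>0$ on $\{\|u\|_{\mu}=\rho^{\ast}\}$. This is proved by splitting according to $s:=\|\nabla u\|_{L^{2}}$: if $\lambda\|f\|_{N/2}S^{-1}s^{2}\le\tfrac12(\rho^{\ast})^{2}$ (with $S$ the Sobolev constant, so $\int fu^{2}\le\|f\|_{N/2}S^{-1}s^{2}$ and $\int g|u|^{p}$ small with $s$), then $\|u\|_{\mu}^{2}-\lambda\int fu^{2}\ge\tfrac12(\rho^{\ast})^{2}$ and $J_{a,\lambda}^{\mu}(u)\ge\tfrac14(\rho^{\ast})^{2}-C(\rho^{\ast})^{p}>0$; for larger $s$ the quartic term $\tfrac{a}{4}s^{4}$ is bounded below by a positive multiple of $a(\rho^{\ast})^{4}$, which beats $\tfrac1p|\int g|u|^{p}|\le C(\rho^{\ast})^{p}$ (negligible since $p>4$, $\rho^{\ast}$ small) and the loss $-\tfrac12(\|u\|_{\mu}^{2}-\lambda\int fu^{2})\le\tfrac{\delta_{a}}{\lambda_{1}(f_{\Omega})}(\rho^{\ast})^{2}$ ($\mu$ large). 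Consequently $\inf_{\overline{B}_{\rho^{\ast}}}J_{a,\lambda}^{\mu}\le J_{a,\lambda}^{\mu}(t\phi_{1})<0$ for small $t$ while $J_{a,\lambda}^{\mu}>0$ on $\{\|u\|_{\mu}=\rho^{\ast}\}$, so, by weak lower semicontinuity and the compactness of Part $(i)$, this infimum is attained at an interior point $u^{(1)}_{a,\lambda}\neq0$: a first positive solution, of negative energy. For the second, pick $\psi\in H_{0}^{1}(\Omega)$ with $\int g|\psi|^{p}>0$ and $T$ large so $J_{a,\lambda}^{\mu}(T\psi)<0$; then $\max\{J_{a,\lambda}^{\mu}(u^{(1)}_{a,\lambda}),J_{a,\lambda}^{\mu}(T\psi)\}<0<\inf_{\|u\|_{\mu}=\rho^{\ast}}J_{a,\lambda}^{\mu}$, and the mountain--pass theorem (with the Palais--Smale condition as in Part $(i)$) yields a critical point $u^{(2)}_{a,\lambda}$ at level $\ge\inf_{\|u\|_{\mu}=\rho^{\ast}}J_{a,\lambda}^{\mu}>0$, necessarily distinct from $u^{(1)}_{a,\lambda}$ (negative energy) and from $0$; positivity follows as before.

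I expect the barrier estimate of Part $(ii)$, together with the calibration of $\rho^{\ast}$ and $\delta_{a}$, to be the main obstacle: near $0$ the graph of $J_{a,\lambda}^{\mu}$ is a shallow well of depth of order $(\lambda-\lambda_{1}(f_{\Omega}))^{2}/a$ wedged between the non-minimum at $0$ and a positive ridge raised by the fourth-order nonlocal term, before $J_{a,\lambda}^{\mu}$ plunges to $-\infty$ along $\psi$; making precise that ``the ridge stays above level $0$ while the well is genuinely interior'' for \emph{all} $4<p<2^{\ast}$ and \emph{without} any sign hypothesis on $\int_{\Omega}g\phi_{1}^{p}dx$ is exactly what replaces the Nehari-manifold decomposition of \cite{ZLW}, and here the direct-sum splitting $E_{\mu}=\mathbb{R}\phi_{1}\oplus W_{\mu}$ (with a coercive quadratic form on $W_{\mu}$ for $\mu$ large, from the eigenvalue asymptotics) and the $\|\nabla u\|_{L^{2}}$-splitting both enter. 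The other, more routine, difficulty is the Palais--Smale condition — in particular the passage to the limit for the nonlocal term — for which $p>4$ (boundedness) and the compactness of $E_{\mu}\hookrightarrow L^{q}$ for $\mu$ large suffice.
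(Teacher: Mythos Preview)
Your barrier estimate in Part $(ii)$ --- splitting according to whether $s:=\|\nabla u\|_{L^{2}}$ is small or comparable to $\rho^{\ast}$ --- is correct and is in fact \emph{simpler} than the paper's route. The paper establishes $\inf_{\|u\|_{\mu}=\rho}J_{a,\lambda}^{\mu}>0$ via the orthogonal decomposition $u=t\phi_{1,\mu}+w$ with $\langle w,\phi_{1,\mu}\rangle_{\mu}=0$, invokes the \emph{second} eigenvalue $\lambda_{2,\mu}(f)$ (a separate lemma on the minimization orthogonal to $\phi_{1,\mu}$) to control the $w$-part, and then proves the nontrivial algebraic bound $\|u\|_{D^{1,2}}^{4}\ge\tfrac{1}{16}\|u\|_{\mu}^{4}-\tfrac{137}{8}\|w\|_{\mu}^{4}$ from several Young-type estimates on cross terms. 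Your argument bypasses both the second eigenvalue and that lemma: when $s$ is small, $\int fu^{2}\le\|f\|_{N/2}S^{-2}s^{2}$ is small and the quadratic part alone gives the barrier; when $s^{2}\gtrsim(\rho^{\ast})^{2}$, the quartic $\tfrac{a}{4}s^{4}\gtrsim a(\rho^{\ast})^{4}$ absorbs both $\int g|u|^{p}\le C(\rho^{\ast})^{p}$ (negligible since $p>4$) and the deficit $\tfrac12(\lambda\int fu^{2}-\|u\|_{\mu}^{2})\lesssim\tfrac{\delta_{a}}{\lambda_{1}(f_{\Omega})}(\rho^{\ast})^{2}$. This is a genuine simplification and yields the same calibration $\delta_{a}\sim a(\rho^{\ast})^{2}$.

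There is, however, one real gap. Your claim that $E_{\mu}\hookrightarrow L^{q}(\mathbb{R}^{N})$ is compact for $2\le q<2^{\ast}$ and $\mu$ large is \emph{false} under $(V1)$--$(V2)$ alone: take $V\equiv0$ on $B_{1}$ and $V\equiv1$ elsewhere, and translate a fixed bump to infinity; the $\|\cdot\|_{\mu}$-norm stays bounded for every fixed $\mu$ but no $L^{q}$-subsequence converges. Consequently $u\mapsto\int_{\mathbb{R}^{N}}g|u|^{p}dx$ is \emph{not} weakly continuous on $E_{\mu}$, and your direct-minimization step (``the infimum over the weakly compact ball is attained'') does not go through as written; the same defect undermines the last step of your $(PS)$-argument (``the compact embedding kills the lower-order terms''). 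The paper avoids this entirely: it never claims weak continuity of the $g$-term, obtains a $(PS)$-sequence in the ball via Ekeland's variational principle, and then appeals to a compactness lemma for bounded $(PS)$-sequences (quoted from \cite{ZLW}) whose proof uses the equation structure and the largeness of $\mu$ rather than a compact embedding. Replace your direct minimization by Ekeland plus that lemma and your argument is complete. A minor related point: in your $(PS)$-boundedness step, positivity of the coefficient in front of $\|u_{n}\|_{\mu}^{2}-\lambda\int fu_{n}^{2}$ is not enough by itself when $\lambda>\lambda_{1,\mu}(f)$ (which occurs even at $\lambda=\lambda_{1}(f_{\Omega})$); you need the extra observation $\int fu_{n}^{2}\le\|f\|_{N/2}S^{-2}\|\nabla u_{n}\|^{2}$, after which the quartic absorbs the negative quadratic via Young --- exactly the paper's estimate $(\ref{4-1})$--$(\ref{4-2})$.
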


\begin{remark}
\label{R1} $(i)$ Theorem \ref{T1} $(ii)$ give an answer to Question $(I).$%
\newline
$(ii)$ In fact, in the proof of Theorem \ref{T1} $(ii)$ one can see that the
length of right neighborhood of $\lambda _{1}(f_{\Omega }),$ i.e. $\delta
_{a}$ can be given explicitly by
\begin{equation*}
\delta _{a}=\min \left\{ a^{\frac{p-2}{p-4}}C_{1},C_{2}\right\} ,
\end{equation*}%
where $C_{1},C_{2}>0$. It means that $\delta _{a}$ depends on the parameter $%
a$ when $a$ is small while not depending on it when $a$ is large.
\end{remark}

Let us consider the following problem:
\begin{equation*}
\Gamma _{p}:=\sup \left\{ \frac{\int_{\Omega }g(x)|u|^{p}dx}{\left(
\int_{\Omega }|\nabla u|^{2}dx\right) ^{p/2}}\ |\ u\in H_{0}^{1}(\Omega
)\diagdown \{0\},\int_{\Omega }f_{\Omega }(x)u^{2}dx\geq 0\right\} .
\end{equation*}%
Under conditions $(D1)-(D2)$, we can choose a function $\varphi \in
H_{0}^{1}(\Omega )$ such that $\int_{\Omega }f_{\Omega }(x)\varphi ^{2}dx>0$
and $\int_{\Omega }g(x)|\varphi |^{p}dx>0$ (see \cite[Proposition 6.2]{CC}
for more details). Then, it is easy to deduce that $0<\Gamma _{p}<\infty $
by conditions $(V2),(D2)$ and Sobolev inequality. Now we set
\begin{equation*}
a_{0}(p)=2(p-2)\left( 4-p\right) ^{\frac{4-p}{p-2}}\left( \frac{\Gamma _{p}}{%
p}\right) ^{\frac{2}{p-2}}\text{ for }2<p<\min \{4,2^{\ast }\}.
\end{equation*}

\begin{theorem}
\label{T2} Suppose that $N\geq 3,2<p<\min \{4,2^{\ast }\}$ and conditions $%
(V1)-(V2),(D1)-(D2)$ hold. In addition, for $N=3$, we also assume that
condition $(H3)$ is satisfied. Then for each $0<a<a_{0}(p)$, the following
statements are ture.\newline
$(i)$ For each $0<\lambda <\lambda _{1}(f_{\Omega })$, Eq. $(K_{a,\lambda
}^{\mu })$ has at least two positive solutions for $\mu $ sufficiently large.%
\newline
$(ii)$ If $\int_{\Omega }g(x)\phi _{1}^{p}dx<0$, then we have\newline
$(ii-1)$ for $\lambda =\lambda _{1}(f_{\Omega })$, Eq. $(K_{a,\lambda }^{\mu
})$ has at least two positive solutions for $\mu $ sufficiently large;%
\newline
$(ii-2)$ there exists $\overline{\delta }_{a}>0$ such that for every $%
\lambda _{1}(f_{\Omega })<\lambda <\lambda _{1}(f_{\Omega })+\overline{%
\delta }_{a}$, Eq. $(K_{a,\lambda }^{\mu })$ has at least three positive
solutions for $\mu $ sufficiently large.
\end{theorem}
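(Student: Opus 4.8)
The plan is to work on the Hilbert space
\[
E_{\mu}:=\Big\{u\in H^{1}(\mathbb{R}^{N})\ :\ \int_{\mathbb{R}^{N}}\mu V(x)u^{2}\,dx<\infty\Big\},\qquad \|u\|_{\mu}^{2}:=\int_{\mathbb{R}^{N}}\big(|\nabla u|^{2}+\mu V(x)u^{2}\big)\,dx,
\]
with the associated $C^{1}$ energy functional
\[
J(u)=\frac{a}{4}\Big(\int_{\mathbb{R}^{N}}|\nabla u|^{2}\,dx\Big)^{2}+\frac12\|u\|_{\mu}^{2}-\frac{\lambda}{2}\int_{\mathbb{R}^{N}}f u^{2}\,dx-\frac1p\int_{\mathbb{R}^{N}}g|u|^{p}\,dx,
\]
and to replace $g|u|^{p-2}u$ and $fu$ by $g(u^{+})^{p-1}$ and $fu^{+}$, so that every nontrivial critical point of $J$ is, by the strong maximum principle, a positive solution of $(K_{a,\lambda}^{\mu})$. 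Since $2<p<\min\{4,2^{\ast}\}$, the quartic nonlocal term dominates, so for each fixed $a>0$ the functional $J$ is coercive and weakly lower semicontinuous, and, for $\mu$ sufficiently large, the steep--well compactness (see \cite{BW,SW1}, and, when $N=3$, \cite{SW3} for the role of $(H3)$) gives the Palais--Smale condition at every level together with a global minimizer. The first task is then to show $c_{0}:=\inf_{E_{\mu}}J<0$. Testing with $t\varphi$, where $\varphi\in H_{0}^{1}(\Omega)$ satisfies $\int_{\Omega}f_{\Omega}\varphi^{2}\,dx\ge0$ and $\int_{\Omega}g|\varphi|^{p}\,dx>0$ and is extended by $0$, one is led to minimize over $t>0$ the function $\tfrac{a}{4}A^{2}t^{4}+\tfrac{B}{2}t^{2}-\tfrac{C}{p}t^{p}$, with $A=\|\nabla\varphi\|_{2}^{2}$, $B=\|\nabla\varphi\|_{2}^{2}-\lambda\int_{\Omega}f_{\Omega}\varphi^{2}\,dx\ge0$ and $C=\int_{\Omega}g|\varphi|^{p}\,dx>0$; this minimum is negative precisely when $a$ is below an explicit threshold, and optimizing over $\varphi$ in the worst case ($\lambda$ small, so $B\to A$) identifies that threshold with $a_{0}(p)$ through $\Gamma_{p}$. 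Hence for $0<a<a_{0}(p)$ and all $\lambda$ in the relevant ranges we obtain a first positive solution $u_{0}$ with $J(u_{0})=c_{0}<0$, and, since $c_{0}\le-\epsilon_{0}<0$ uniformly for $\lambda$ near $\lambda_{1}(f_{\Omega})$, the norm $\|u_{0}\|_{\mu}$ is bounded away from $0$.

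For part $(i)$, since $\lambda<\lambda_{1}(f_{\Omega})$ and $\mu$ is large the steep--well estimate gives $\|u\|_{\mu}^{2}-\lambda\int_{\mathbb{R}^{N}}fu^{2}\,dx\ge c_{\lambda}\|u\|_{\mu}^{2}$ with $c_{\lambda}>0$, so $J(u)\ge\tfrac{c_{\lambda}}{2}\|u\|_{\mu}^{2}-C'\|u\|_{\mu}^{p}>0$ for small $\|u\|_{\mu}>0$ (because $p>2$); thus $0$ is a strict local minimum, and the mountain pass theorem applied to the pair $\{0,u_{0}\}$ — the $(PS)$ condition holding by the coercivity and compactness above — produces a second positive solution $u_{1}$ at a level $c_{\mathrm{MP}}>0$, so $u_{1}\ne u_{0}$ (distinct energies) and $u_{1}\ne0$. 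For part $(ii\text{-}1)$, at $\lambda=\lambda_{1}(f_{\Omega})$ the quadratic form $\|u\|_{\mu}^{2}-\lambda_{1}(f_{\Omega})\int_{\mathbb{R}^{N}}fu^{2}\,dx$ is only positive semidefinite, degenerating along $\phi_{1}$; but on that axis $J(t\phi_{1})=\tfrac{a}{4}\lambda_{1}(f_{\Omega})^{2}t^{4}-\tfrac1p t^{p}\int_{\Omega}g\phi_{1}^{p}\,dx>0$ for $t\ne0$ because $\int_{\Omega}g\phi_{1}^{p}\,dx<0$, and using the spectral gap below $\lambda_{2}(f_{\Omega})$ to control the complementary directions (and the sign condition to absorb cross terms) one still gets that $0$ is a strict local minimum; the same mountain pass argument then yields a second positive solution.

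For part $(ii\text{-}2)$, with $\lambda_{1}(f_{\Omega})<\lambda<\lambda_{1}(f_{\Omega})+\overline{\delta}_{a}$, the origin is no longer a local minimum, since
\[
J(t\phi_{1})=\frac{a}{4}\lambda_{1}(f_{\Omega})^{2}t^{4}-\frac{\lambda-\lambda_{1}(f_{\Omega})}{2}t^{2}+\frac1p t^{p}\Bigl|\int_{\Omega}g\phi_{1}^{p}\,dx\Bigr|<0
\]
for small $t>0$. Using the direct sum decomposition $E_{\mu}=\mathbb{R}\phi_{1}\oplus W$ and minimizing $J$ over a small ball around $0$ (in the spirit of \cite{AT} and the references therein), the sign condition $\int_{\Omega}g\phi_{1}^{p}\,dx<0$ together with the quartic term keeps $J$ bounded below and makes it bend back upward off the $\phi_{1}$--axis, so the infimum of $J$ over that ball is negative and attained at an interior point $v_{0}$, a second positive solution; taking $\overline{\delta}_{a}$ small makes $\|v_{0}\|_{\mu}$ small, hence $v_{0}\ne u_{0}$. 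Finally the mountain pass theorem applied to the two local minima $v_{0}$ and $u_{0}$ gives a positive solution $u_{1}$ with $J(u_{1})>\max\{J(v_{0}),J(u_{0})\}$, hence distinct from both, so that $(K_{a,\lambda}^{\mu})$ has three positive solutions.

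The step I expect to be the main obstacle is the construction of the bifurcating solution $v_{0}$ in $(ii\text{-}2)$ and the proof that it is a genuine critical point, distinct from $u_{0}$, with the mountain pass level lying strictly above $J(v_{0})$: because of the nonlocal quartic term the reduction onto the $\phi_{1}$--axis is a nonlinear, nonlocal minimization rather than the purely quadratic one used in the local problem \cite{AT}, so one must show carefully that the constrained minimum lies in the interior of the ball (uniformly as $\lambda\downarrow\lambda_{1}(f_{\Omega})$) and that $v_{0}$ is a strict local minimizer. A secondary point is the sharp identification of the threshold $a_{0}(p)$, i.e. matching the optimal value of $\min_{t>0}\big(\tfrac{a}{4}A^{2}t^{4}+\tfrac{B}{2}t^{2}-\tfrac{C}{p}t^{p}\big)$ with $\Gamma_{p}$, and keeping track throughout of how large $\mu$ must be (in terms of $a,\lambda,p$, and via $(H3)$ when $N=3$) for the compactness and coercivity estimates to hold.
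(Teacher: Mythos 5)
Your proposal follows essentially the same route as the paper: coercivity of $J_{a,\lambda}^{\mu}$ from the quartic nonlocal term (Lemma \ref{L5}), a global minimizer at negative energy, a mountain-pass solution at positive energy obtained from positivity of $J_{a,\lambda}^{\mu}$ on a small sphere (via the decomposition $u=t\phi_{1,\mu}+w$, the spectral gap $\lambda_{2,\mu}(f)$, and the sign condition when $\lambda\geq\lambda_{1}(f_{\Omega})$, as in Lemma \ref{L2}), and, for $(ii$-$2)$, a third solution as a local minimizer inside that sphere, with $a_{0}(p)$ identified exactly as you describe. The one caveat is your claim in $(ii$-$1)$ that $0$ is a strict local minimum at $\lambda=\lambda_{1}(f_{\Omega})$: for finite $\mu$ this is false, since $\lambda_{1,\mu}(f)<\lambda_{1}(f_{\Omega})$ forces $J_{a,\lambda}^{\mu}(t\phi_{1,\mu})<0$ for small $t$; what survives (and is all the mountain pass theorem needs) is that the infimum of $J_{a,\lambda}^{\mu}$ on the sphere $\Vert u\Vert_{\mu}=\overline{\rho}_{a,\lambda}$ stays positive, because the small negative quadratic contribution $-|\theta_{1,\mu}|\Vert u\Vert_{\mu}^{2}$ is absorbed by the $|t|^{p}$ term produced by $\int_{\Omega}g(x)\phi_{1}^{p}dx<0$, which is precisely the estimate $(\ref{4.8})$ in the paper.
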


\begin{remark}
\label{R2} $(i)$ Theorem \ref{T2} answers Questions $(II)$ and $(III).$%
\newline
$(ii)$ Similar to Remark \ref{R1} $(ii)$, $\overline{\delta }_{a}$ can also
be given explicitly by%
\begin{equation*}
\overline{\delta }_{a}=\min \left\{ a^{-\frac{p-2}{4-p}}C_{3},C_{4}\right\} ,
\end{equation*}%
where $C_{3},C_{4}>0$. It shows that $\overline{\delta }_{a}$ does not
depend on $a$ for $a$ small while depending on it for $a$ large.
\end{remark}

To prove Theorems \ref{T1} and \ref{T2}, we need to study the mountain pass
geometry of the functional related to Eq. $(K_{a,\lambda }^{\mu })$ when $%
\lambda $ lies in a right neighborhood of $\lambda _{1,\mu }(f)$ by
decomposing each $u\in X$ (defined later) into the sum of a function in span$%
\{\phi _{1,\mu }\}$ and a function in $\{\text{span}\{\phi _{1,\mu
}\}\}^{\perp }$, where $\lambda _{1,\mu }(f)$ is the positive principal
eigenvalue of the problem
\begin{equation*}
-\Delta u+\mu V(x)u=\lambda f(x)u\quad \text{in $X,$}
\end{equation*}%
and $\phi _{1,\mu }$ is the corresponding positive principal eigenfunction.
Then using the approximation estimate gives
\begin{equation*}
\lambda _{1,\mu }(f)\rightarrow \lambda _{1}^{-}(f_{\Omega })\ \text{as $\mu
\rightarrow \infty $},
\end{equation*}%
which can help us obtain the mountain pass geometry in a right neighborhood
of $\lambda _{1}(f_{\Omega })$. In the case of $N=3$ and $4<p<6$, the
non-local term in Eq. $(K_{a,\lambda }^{\mu })$ can ensure the mountain pass
geometry for $\lambda $ in a right neighborhood of $\lambda _{1}(f_{\Omega
}) $ without any assumption on the integral $\int_{\Omega }g(x)\phi
_{1}^{p}dx$. However, when $N\geq 3$ and $2<p<\min \{4,2^{\ast }\}$, we need
to add the condition $\int_{\Omega }g(x)\phi _{1}^{p}dx<0$ to ensure the
mountain pass geometry for $\lambda $ in a right neighborhood of $\lambda
_{1}(f_{\Omega }).$

Next, we consider the case of $\int_{\Omega }g(x)\phi _{1}^{p}dx>0.$ Let
\begin{equation}
\lambda _{a}^{+}=\lambda _{1}(f_{\Omega })-(4-p)\left( \frac{%
\int_{\Omega}g(x)\phi _{1}^{p}dx}{p}\right) ^{\frac{2}{4-p}}\left( \frac{%
2(p-2)}{a\lambda _{1}^{2}(f_{\Omega })}\right) ^{\frac{p-2}{4-p}}.
\label{lam:a+}
\end{equation}%
Note that $0\leq \lambda _{a}^{+}<\lambda _{1}(f_{\Omega })$ for each $a\geq
a_{0}(p)$. Then we have the following result.

\begin{theorem}
\label{T3} Suppose that $N\geq 3,2<p<\min \{4,2^{\ast }\}$ and conditions $%
(V1)-(V2),(D1)-(D2)$ hold. In addition, for $N=3$, we also assume that
condition $(H3)$ is satisfied. If $\int_{\Omega }g(x)\phi _{1}^{p}dx>0$,
then for each $a\geq a_{0}(p)$ and $\lambda _{a}^{+}<\lambda <\lambda
_{1}(f_{\Omega }),$ Eq. $(K_{a,\lambda }^{\mu })$ has at least two positive
solutions for $\mu $ sufficiently large.
\end{theorem}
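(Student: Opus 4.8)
\noindent The plan is to work with the energy functional $J_{a,\lambda }^{\mu }$ associated to $(K_{a,\lambda }^{\mu })$ on the space $X$ (the $\mu $-dependent weighted Sobolev space used throughout the paper) and to exhibit two distinct positive critical points: a local minimizer near the origin and a mountain-pass point above it. Since we are in the regime $2<p<\min\{4,2^{\ast }\}$ with the local term $u$ being \emph{subcritical} relative to the quartic nonlocal term, the functional has the same qualitative shape as in Theorem~\ref{T2}$(i)$: because $\lambda <\lambda _{1}(f_{\Omega })$, the quadratic part $\tfrac12\int(|\nabla u|^{2}+\mu V u^{2})-\tfrac{\lambda }{2}\int f u^{2}$ is coercive/positive-definite on $X$ for $\mu $ large (using $\lambda _{1,\mu }(f)\to \lambda _{1}(f_{\Omega })$ as $\mu \to\infty $, which is the approximation estimate the authors already invoke), so $0$ is a strict local minimum and $J_{a,\lambda }^{\mu }$ is bounded below on a small ball. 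First I would establish the mountain-pass geometry: the $p<4$ growth of $g(x)|u|^{p-2}u$ means the functional dips below zero along suitable rays, so there is $e\in X$ with $J_{a,\lambda }^{\mu }(e)<0$, and the standard min-max value $c_{a,\lambda }^{\mu }>0$ is well-defined.

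\noindent The second step, and the point where the hypothesis $a\geq a_{0}(p)$ together with the lower bound $\lambda >\lambda _{a}^{+}$ must be used, is to locate a \emph{second} solution as a genuine local minimizer of $J_{a,\lambda }^{\mu }$ with strictly negative energy, distinct from $0$. Here the role of $\Gamma _{p}$ and $a_{0}(p)$ becomes transparent: evaluating $J_{a,\lambda }^{\mu }$ along the direction of $\phi _{1}$ (extended by $0$ off $\Omega $, which is admissible since $\overline{\Omega }\subset X$) and optimizing in the scaling parameter $t$, the condition $\int_{\Omega }g(x)\phi _{1}^{p}\,dx>0$ makes the leading competition
\[
\frac{t^{2}}{2}\bigl(\lambda _{1}(f_{\Omega })-\lambda \bigr)\int_{\Omega }f_{\Omega }\phi _{1}^{2}\,dx+\frac{a t^{4}}{4}\Bigl(\int_{\Omega }|\nabla \phi _{1}|^{2}\,dx\Bigr)^{2}-\frac{t^{p}}{p}\int_{\Omega }g\phi _{1}^{p}\,dx
\]
between a quadratic barrier of height $\propto (\lambda _{1}(f_{\Omega })-\lambda )$, a quartic term weighted by $a$, and a $p$-subcritical well; a direct calculus computation shows this attains a strictly negative minimum precisely when $a$ and $\lambda $ satisfy the quantitative inequality encoded in $\lambda _{a}^{+}$ (indeed $\lambda _{a}^{+}$ is defined exactly so that $\lambda >\lambda _{a}^{+}$ forces the negative dip; when $a=a_{0}(p)$ this forces $\lambda _{a}^{+}=0$, consistent with the stated $0\le \lambda _{a}^{+}<\lambda _{1}(f_{\Omega })$). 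One then argues that the infimum of $J_{a,\lambda }^{\mu }$ over $X$ (or over a suitable closed ball) is negative and attained, giving the first positive solution $u_{1}$, while the mountain-pass construction based over $u_{1}$ yields a second critical point $u_{2}$ at level $c_{a,\lambda }^{\mu }>J_{a,\lambda }^{\mu }(u_{1})$, hence $u_{2}\neq u_{1}$.

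\noindent The recurring technical obstacle—shared with Theorems~\ref{T1} and~\ref{T2}—is the Palais--Smale condition: because of the nonlocal term the usual splitting argument must control the weak limit of the gradient energies $\int|\nabla u_{n}|^{2}$, and because $X$ depends on $\mu $ one needs the steep-well compactness (the embedding $X\hookrightarrow L^{q}(\mathbb{R}^{N})$ being compact for $\mu $ large on the relevant range of $q$, which rests on $(V1)$ and, for $N=3$, on the auxiliary condition $(H3)$ to handle the behavior of $g$ at infinity). I would verify boundedness of (PS)$_{c}$ sequences directly from the $p<4$ structure: the quartic leading term dominates, so $J_{a,\lambda }^{\mu }(u_{n})-\tfrac14\langle (J_{a,\lambda }^{\mu })'(u_{n}),u_{n}\rangle $ controls $\|u_{n}\|$ once $\mu $ is large enough that the quadratic form is positive definite—this is cleaner than the Nehari-filtration device of \cite{ZLW} and is exactly why the authors switch methods. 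Passing to the limit then produces a nonzero critical point; positivity follows by the usual trick of replacing the nonlinearity with its positive-part truncation and invoking the maximum principle, noting $g^{+}\not\equiv 0$ on $\Omega $. Finally, as in the earlier theorems, letting $\mu \to\infty $ is not needed for a fixed large $\mu $: the whole argument is carried out at one fixed sufficiently large $\mu $, with the approximation $\lambda _{1,\mu }(f)\to\lambda _{1}(f_{\Omega })$ only used to guarantee that such a $\mu $ exists for the given $\lambda <\lambda _{1}(f_{\Omega })$.
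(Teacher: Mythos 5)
Your overall strategy coincides with the paper's: the paper proves Theorem \ref{T3} by combining Lemma \ref{L3} (mountain pass geometry obtained exactly as you describe, by testing $J_{a,\lambda }^{\mu }$ along $t\phi _{1}$ and observing that $J_{a,\lambda }(t_{a}\phi _{1})=\tfrac{t_{a}^{2}}{2}(\lambda _{a}^{+}-\lambda )<0$ for $\lambda >\lambda _{a}^{+}$, with the small-ball positivity coming from $\lambda <\lambda _{1}(f_{\Omega })$ and $\lambda _{1,\mu }(f)\to \lambda _{1}^{-}(f_{\Omega })$) with the two-critical-point argument of Theorem \ref{T2}$(i)$: one solution at a positive mountain-pass level and one at a negative level obtained via Ekeland's principle on $\{\Vert u\Vert _{\mu }\geq \widehat{\rho }_{a,\lambda }\}$. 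Your identification of the roles of $a_{0}(p)$, $\Gamma _{p}$ and $\lambda _{a}^{+}$ is exactly the content of Lemma \ref{L3}.

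There is, however, one concrete misstep: you claim that boundedness of $(PS)_{c}$-sequences follows because ``the quartic leading term dominates, so $J_{a,\lambda }^{\mu }(u_{n})-\tfrac14\langle (J_{a,\lambda }^{\mu })'(u_{n}),u_{n}\rangle $ controls $\Vert u_{n}\Vert $.'' This combination does not work here: the quartic nonlocal term cancels identically in it, and what remains is
\begin{equation*}
J_{a,\lambda }^{\mu }(u_{n})-\tfrac14\langle (J_{a,\lambda }^{\mu })'(u_{n}),u_{n}\rangle =\tfrac14\Vert u_{n}\Vert _{\mu }^{2}-\tfrac{\lambda }{4}\int_{\mathbb{R}^{N}}f(x)u_{n}^{2}\,dx+\tfrac{p-4}{4p}\int_{\mathbb{R}^{N}}g(x)|u_{n}|^{p}\,dx,
\end{equation*}
in which the last term has the \emph{wrong} sign for $2<p<4$ and grows like $\Vert u_{n}\Vert _{\mu }^{p}$ with $p>2$, so the right-hand side is not bounded below by a coercive function of $\Vert u_{n}\Vert _{\mu }$. (That truncated combination is the right tool in the complementary regime $p>4$ of Theorem \ref{T1}, not here.) The correct mechanism, which the paper isolates as Lemma \ref{L5}, is the coercivity of $J_{a,\lambda }^{\mu }$ itself: $J_{a,\lambda }^{\mu }(u)\geq \tfrac14\Vert u\Vert _{\mu }^{2}-C_{N,a,\lambda }$ for $\mu $ large, obtained by absorbing $\tfrac1p\int g|u|^{p}$ into $\tfrac{a}{4}\Vert u\Vert _{D^{1,2}}^{4}$ plus a fraction of $\mu \int Vu^{2}$ via Young's inequality — and this absorption is nontrivial precisely because $g\in L^{\infty }$ does not decay, which is where the interpolation $(\ref{r1})$ (for $N\geq 4$) and condition $(H3)$ together with the Caffarelli--Kohn--Nirenberg inequality (for $N=3$) enter. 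Since a $(PS)_{c}$-sequence has bounded energy, coercivity then gives the norm bound needed for Lemma \ref{PS-C}. With that substitution your argument closes and reproduces the paper's proof.
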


\begin{figure}[H]
\centering
\subfigure[$\int_{\Omega }g(x)\phi _{1}^{p}dx<0$ and $a<a_{0}(p)$]{
        \includegraphics[width=2.5in,height=1.6in]{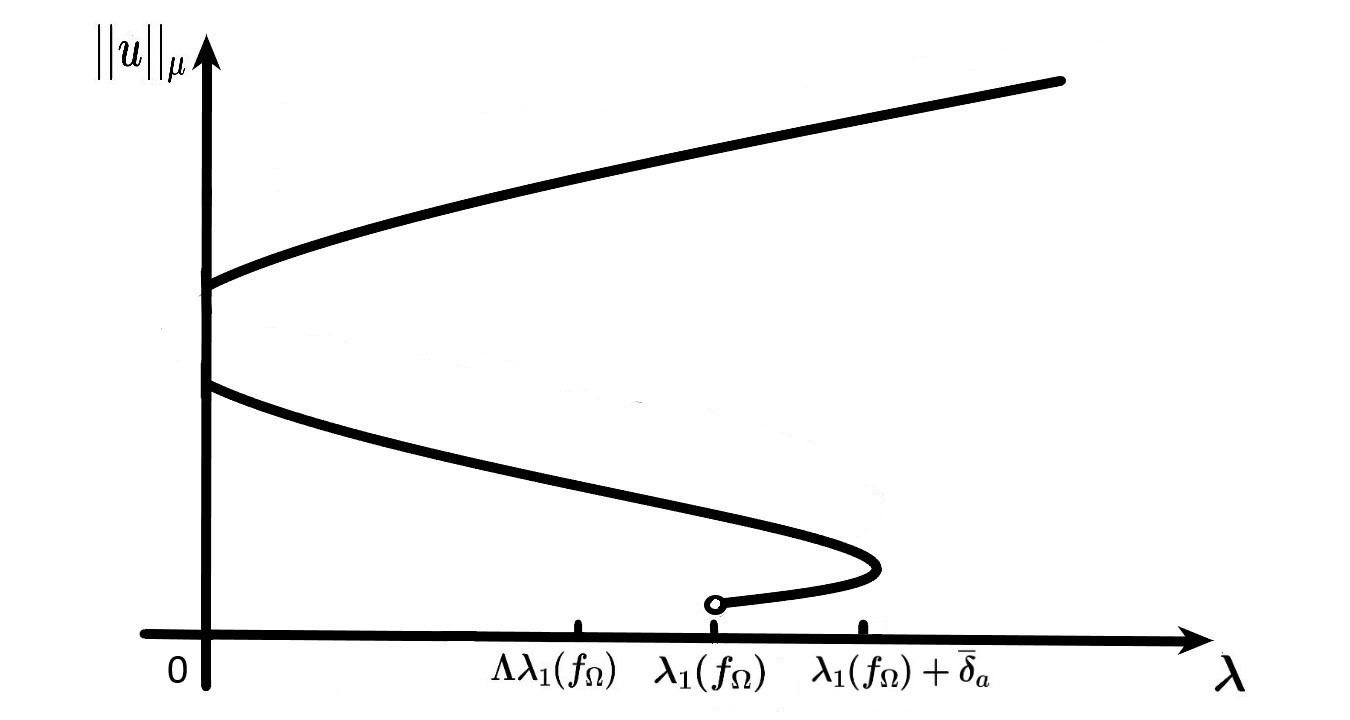}}
\subfigure[$\int_{\Omega }g(x)\phi _{1}^{p}dx>0$ and $a_0(p)<a_1<a_2$]{
        \includegraphics[width=2.5in,height=1.6in]{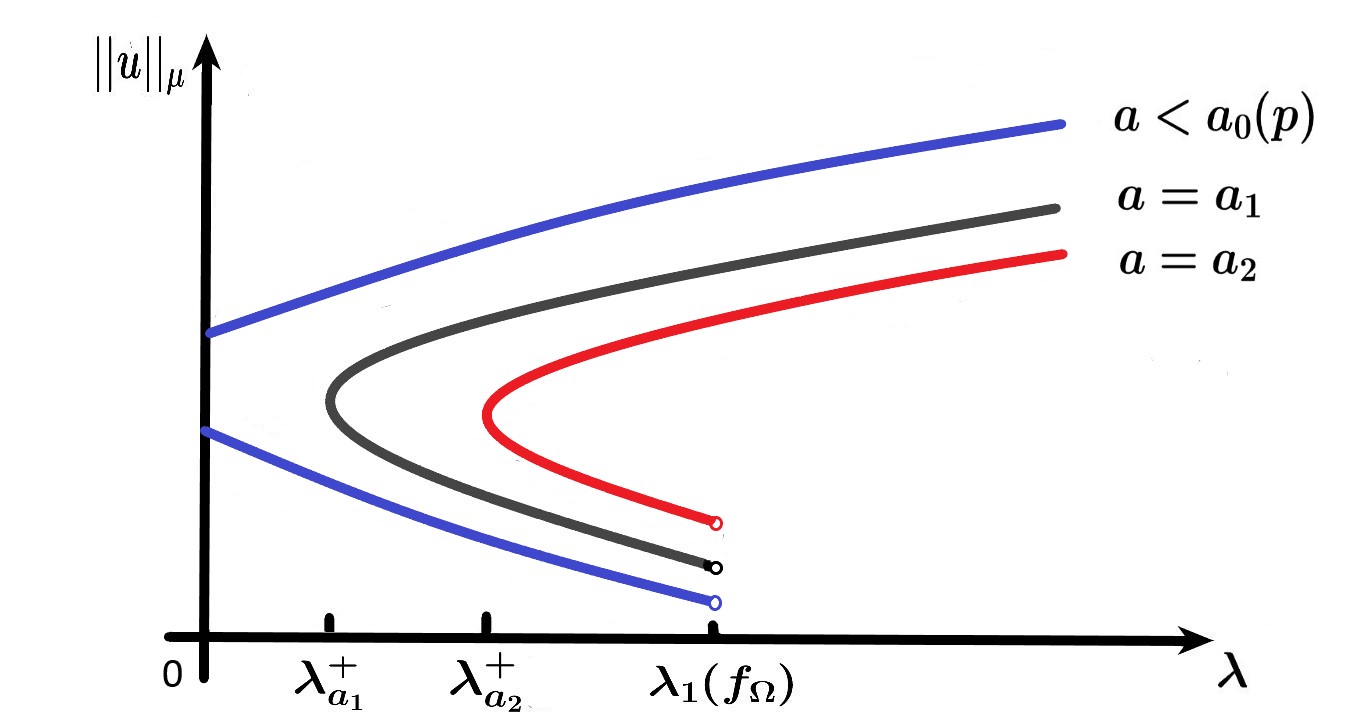}}
\caption{Bifurcation diagrams for Theorem \protect\ref{T2} on (a) and for
Theorems \protect\ref{T2} $(i)$ and \protect\ref{T3} on (b).}
\end{figure}

The results of Theorems \ref{T2} and \ref{T3} are illustrated in Figure 2.
For $a<a_{0}(p)$ assumed in $(a)$ and $(b)$, the turning from the middle to
the upper solution branch occurs in the region $\lambda<0$ demonstrating the
result of Theorem \ref{T2} $(i)$ permitting two positive solutions whenever $%
0<\lambda<\lambda _{1}(f_{\Omega })$. In $(a)$, the upper solution branch
extending passes the bifurcation point $\lambda _{1}(f_{\Omega })$ and the
lower solution branch also continues its extension to the right before
turning at a point $\lambda _{1}(f_{\Omega })+\overline{\delta }_{a}$,
giving two positive solutions whenever $\lambda =\lambda _{1}(f_{\Omega })$
described by Theorem \ref{T2} $(ii-1)$ and three positive solutions whenever
$\lambda _{1}(f_{\Omega })<\lambda <\lambda _{1}(f_{\Omega })+\overline{%
\delta }_{a}$ by Theorem \ref{T2} $(ii-2)$. For $a\geq a_{0}(p)$ assumed in $%
(b)$, the turning from the middle to the upper solution branch occurs at $%
\lambda _{a}^{+}$, thus describing the case of Theorem \ref{T3} for which
two positive solutions are found for $\lambda _{a}^{+}<\lambda<\lambda
_{1}(f_{\Omega })$. Moreover, note that $\lambda _{a}^{+}$ approaches $%
\lambda _{1}(f_{\Omega })$ from the left with increasing values of $a$.
Consequently, the turning point can be seen to edge closer to the
bifurcation point $\lambda _{1}(f_{\Omega })$ as a result of increasing $a$.

The structure of this paper is as follows. After briefly introducing some
technical lemmas in Section 2, we dicuss the mountain pass geometry of the
energy functional in Section 3. We demonstrate proofs of Theorem \ref{T1} in
Section 4 and of Theorems \ref{T2} and \ref{T3} in Section 5, respectively.

\section{Preliminaries}

We denote the following notations which will be used in the paper. Denote by
$\Vert \cdot \Vert _{r}$ the $L^{r}(\mathbb{R}^{N})$-norm for $1\leq r\leq
\infty $. A strong convergence is indicated using $"\rightarrow "$ whereas
the weak convergence $"\rightharpoonup "$. We use $o(1)$ to denote a
quantity that goes to zero as $n\rightarrow \infty $. If we take a
subsequence of a sequence $\{u_{n}\}$ we shall again denote it by $\{u_{n}\}$%
. Let $S$ be the best Sobolev constant for the embedding of $D^{1,2}(\mathbb{%
R}^{N})$ in $L^{2^{\ast }}(\mathbb{R}^{N})$, where $D^{1,2}(\mathbb{R}^{N})$
is the completion of $C_{0}^{\infty }(\mathbb{R}^{N})$ with respect to the
norm $\Vert u\Vert _{D^{1,2}}^{2}=\int_{\mathbb{R}^{N}}|\nabla u|^{2}dx$.

Let
\begin{equation*}
X=\left\{ u\in H^{1}(\mathbb{R}^{N})\ |\ \int_{\mathbb{R}^{N}}V(x)u^{2}dx<%
\infty \right\}
\end{equation*}%
be equipped with the following inner product and norm%
\begin{equation*}
\langle u,v\rangle _{\mu }=\int_{\mathbb{R}^{N}}(\nabla u\nabla v+\mu
V(x)uv)dx\quad \text{and}\quad\Vert u\Vert _{\mu }=\langle u,u\rangle _{\mu
}^{1/2}
\end{equation*}%
for $\mu >0$. By condition $(V1)$, we have
\begin{align*}
\int_{\mathbb{R}^{N}}u^{2}dx& =\int_{\{V\geq
c_{0}\}}u^{2}dx+\int_{\{V<c_{0}\}}u^{2}dx \\
& \leq \frac{1}{\mu c_{0}}\int_{\mathbb{R}^{N}}\mu V(x)u^{2}dx+\frac{%
|\{V<c_{0}\}|^{\frac{2^{\ast }-2}{2^{\ast }}}}{S^{2}}\Vert u\Vert
_{D^{1,2}}^{2},
\end{align*}%
which implies that the embedding $X\hookrightarrow H^{1}(\mathbb{R}^{N})$ is
continuous. Furthermore, for all $2\leq r\leq 2^{\ast }$, it holds%
\begin{align}
\int_{\mathbb{R}^{N}}|u|^{r}dx& \leq \left( \int_{\mathbb{R}%
^{N}}u^{2}dx\right) ^{\frac{2^{\ast }-r}{2^{\ast }-2}}\left( \int_{\mathbb{R}%
^{N}}|u|^{2^{\ast }}dx\right) ^{\frac{r-2}{2^{\ast }-2}}  \notag \\
& \leq \left( \frac{1}{\mu c_{0}}\int_{\mathbb{R}^{N}}\mu V(x)u^{2}dx+\frac{%
|\{V<c_{0}\}|^{\frac{2^{\ast }-2}{2^{\ast }}}}{S^{2}}\Vert u\Vert
_{D^{1,2}}^{2}\right) ^{\frac{2^{\ast }-r}{2^{\ast }-2}}\left( \frac{\Vert
u\Vert _{D^{1,2}}^{2^{\ast }}}{S^{2^{\ast }}}\right) ^{\frac{r-2}{2^{\ast }-2%
}}  \label{r1} \\
& \leq |\{V<c_{0}\}|^{\frac{2^{\ast }-r}{2^{\ast }}}S^{-r}\Vert u\Vert _{\mu
}^{r}  \label{r2}
\end{align}%
for all $\mu \geq \mu _{0}:=S^{2}\left( c_{0}|\{V<c_{0}\}|^{\frac{2^{\ast }-2%
}{2^{\ast }}}\right) ^{-1}$.

Define the energy functional $J_{a,\lambda }^{\mu }:X\rightarrow \mathbb{R}$
by
\begin{equation*}
J_{a,\lambda }^{\mu }(u)=\frac{a}{4}\Vert u\Vert _{D^{1,2}}^{4}+\frac{1}{2}%
\Vert u\Vert _{\mu }^{2}-\frac{\lambda }{2}\int_{\mathbb{R}^{N}}f(x)u^{2}dx-%
\frac{1}{p}\int_{\mathbb{R}^{N}}g(x)|u|^{p}dx.
\end{equation*}%
$J_{a,\lambda }^{\mu }$ is a $C^{1}$ functional with the derivative given by
\begin{eqnarray*}
\langle (J_{a,\lambda }^{\mu })^{\prime }(u),\varphi \rangle &=&a\Vert
u\Vert _{D^{1,2}}^{2}\int_{\mathbb{R}^{N}}\nabla u\nabla \varphi dx+\int_{%
\mathbb{R}^{N}}(\nabla u\nabla \varphi +\mu V(x)u\varphi )dx \\
&&-\lambda \int_{\mathbb{R}^{N}}f(x)u\varphi dx-\int_{\mathbb{R}%
^{N}}g(x)|u|^{p-2}u\varphi dx
\end{eqnarray*}%
for all $\varphi \in X$, where $(J_{a,\lambda }^{\mu })^{\prime }$ denotes
the Fr\'{e}chet derivative of $J_{a,\lambda }^{\mu }.$ One can see that the
critical points of $J_{a,\lambda }^{\mu }$ are corresponding to the
solutions of Eq. $(K_{a,\lambda }^{\mu }).$

In what follows we consider the following eigenvalue problem:
\begin{equation}
-\Delta u+\mu V(x)u=\lambda f(x)u\quad \text{in}\quad X.  \label{eb-mu}
\end{equation}%
In order to find the positive principal eigenvalue of Eq. (\ref{eb-mu}), we
need to solve the following minimization problem:
\begin{equation*}
\min \left\{ \int_{\mathbb{R}^{N}}\left( |\nabla u|^{2}+\mu V(x)u^{2}\right)
dx\ |\ u\in X\text{ and }\int_{\mathbb{R}^{N}}f(x)u^{2}dx=1\right\} .
\end{equation*}%
Denote
\begin{equation}
\lambda _{1,\mu }(f)=\inf \left\{ \int_{\mathbb{R}^{N}}\left( |\nabla
u|^{2}+\mu V(x)u^{2}\right) dx\ |\ u\in X\text{ and }\int_{\mathbb{R}%
^{N}}f(x)u^{2}dx=1\right\} .  \label{1mu}
\end{equation}%
Using condition $(D1)$ and H\"{o}lder inequality gives
\begin{equation*}
\frac{\int_{\mathbb{R}^{N}}\left( |\nabla u|^{2}+\mu V(x)u^{2}\right) dx}{%
\int_{\mathbb{R}^{N}}f(x)u^{2}dx}\geq \frac{\Vert u\Vert _{D^{1,2}}^{2}}{%
\Vert f\Vert _{N/2}S^{-2}\Vert u\Vert _{D^{1,2}}^{2}}>0,
\end{equation*}%
which implies that $\lambda _{1,\mu }(f)\geq S^{2}\Vert f\Vert _{N/2}^{-1}>0$%
. Moreover, by condition $(V2)$ one has%
\begin{equation*}
\inf_{u\in X\backslash \{0\}}\frac{\int_{\mathbb{R}^{N}}\left( |\nabla
u|^{2}+\mu V(x)u^{2}\right) dx}{\int_{\mathbb{R}^{N}}f(x)u^{2}dx}\leq
\inf_{u\in H_{0}^{1}(\Omega )\backslash \{0\}}\frac{\int_{\Omega }|\nabla
u|^{2}dx}{\int_{\Omega }f_{\Omega }(x)u^{2}dx},
\end{equation*}%
which indicates that $\lambda _{1,\mu }(f)\leq \lambda _{1}(f_{\Omega })$
for all $\mu >0$. Then the following result is proved.

\begin{lemma}
(\cite[Lemma 3.2]{ZLW}) \label{L-eb-1} For each $\mu >0$ there exists a
positive function $\phi _{1,\mu }\in X$ with $\int_{\mathbb{R}^{N}}f(x)\phi
_{1,\mu }^{2}dx=1$ such that
\begin{equation*}
\lambda _{1,\mu }(f)=\int_{\mathbb{R}^{N}}\left( |\nabla \phi _{1,\mu
}|^{2}+\mu V(x)\phi _{1,\mu }^{2}\right) dx<\lambda _{1}(f_{\Omega }).
\end{equation*}%
Furthermore, it holds%
\begin{equation}
\lambda _{1,\mu }(f)\rightarrow \lambda _{1}^{-}(f_{\Omega })\quad \text{and}%
\quad \phi _{1,\mu }\rightarrow \phi _{1}\ \text{in}\ X\ \text{as }\mu
\rightarrow \infty .  \label{lam1}
\end{equation}
\end{lemma}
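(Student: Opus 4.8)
The plan is to prove the lemma in two stages: first, for each fixed $\mu>0$, attainment of the infimum in \eqref{1mu} together with the strict inequality $\lambda_{1,\mu}(f)<\lambda_1(f_\Omega)$; then, the asymptotics as $\mu\to\infty$. For the first stage I would take a minimizing sequence $\{u_n\}\subset X$ with $\int_{\mathbb{R}^N}f(x)u_n^2\,dx=1$ and $\|u_n\|_\mu^2\to\lambda_{1,\mu}(f)$. Since $\|\cdot\|_\mu$ controls $\|\cdot\|_{D^{1,2}}$ and, via condition $(V1)$ and the Sobolev inequality, the $L^2$-norm, the sequence $\{u_n\}$ is bounded in $X$, so up to a subsequence $u_n\rightharpoonup\phi_{1,\mu}$ in $X$. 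The crucial point is that $u\mapsto\int_{\mathbb{R}^N}f(x)u^2\,dx$ is weakly continuous on $X$: splitting $\mathbb{R}^N=B_R\cup B_R^c$, on $B_R$ one uses the compact embedding $H^1(B_R)\hookrightarrow L^2(B_R)$ together with $f\in L^{N/2}$, while on $B_R^c$ one bounds $\int_{B_R^c}|f|u_n^2\le\|f\|_{L^{N/2}(B_R^c)}\|u_n\|_{2^{\ast}}^2$, which is small uniformly in $n$ for $R$ large because $f\in L^{N/2}(\mathbb{R}^N)$. Hence $\int_{\mathbb{R}^N}f\phi_{1,\mu}^2\,dx=1$, so $\phi_{1,\mu}\neq0$, and by weak lower semicontinuity of $\|\cdot\|_\mu$ together with the definition of the infimum, $\|\phi_{1,\mu}\|_\mu^2=\lambda_{1,\mu}(f)$. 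Replacing $\phi_{1,\mu}$ by $|\phi_{1,\mu}|$ (which changes neither quantity) I may assume $\phi_{1,\mu}\ge0$; then $\phi_{1,\mu}$ solves $-\Delta u+\mu V(x)u=\lambda_{1,\mu}(f)f(x)u$ weakly, and a standard elliptic bootstrap (Brezis--Kato) followed by the Harnack inequality gives $\phi_{1,\mu}>0$ in $\mathbb{R}^N$. Since $H_0^1(\Omega)\hookrightarrow X$ by extension by zero we already have $\lambda_{1,\mu}(f)\le\lambda_1(f_\Omega)$; for strictness, if equality held then the zero-extension of $\phi_1$ would also be a minimizer, hence a nonnegative nontrivial weak solution of the same equation on all of $\mathbb{R}^N$, which would be strictly positive everywhere by the strong maximum principle --- impossible since it vanishes on the nonempty open set $\mathbb{R}^N\setminus\overline{\Omega}$.

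For the second stage, $\lambda_{1,\mu}(f)$ is nondecreasing in $\mu$ and bounded above by $\lambda_1(f_\Omega)$, so it converges to some $L\le\lambda_1(f_\Omega)$; I would take $\mu_n\to\infty$ and set $\phi_n:=\phi_{1,\mu_n}$. From $\|\phi_n\|_{\mu_n}^2=\lambda_{1,\mu_n}(f)\le\lambda_1(f_\Omega)$ it follows that $\{\phi_n\}$ is bounded in $D^{1,2}$, hence in $H^1(\mathbb{R}^N)$, and that $\int_{\mathbb{R}^N}V\phi_n^2\,dx\le\lambda_1(f_\Omega)/\mu_n\to0$. Up to a subsequence $\phi_n\rightharpoonup\phi$ in $H^1(\mathbb{R}^N)$, and weak lower semicontinuity forces $\int_{\mathbb{R}^N}V\phi^2\,dx=0$, i.e. $\phi=0$ a.e. on $\{V>0\}\supseteq\mathbb{R}^N\setminus\overline{\Omega}$; since $\partial\Omega$ is smooth this yields $\phi\in H_0^1(\Omega)$. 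The same weak-continuity argument as above gives $\int_\Omega f_\Omega\phi^2\,dx=\lim_n\int_{\mathbb{R}^N}f\phi_n^2\,dx=1$, so $\phi\not\equiv0$ is admissible for the variational characterization of $\lambda_1(f_\Omega)$, and therefore
\[
\lambda_1(f_\Omega)\le\int_\Omega|\nabla\phi|^2\,dx\le\liminf_{n}\int_{\mathbb{R}^N}|\nabla\phi_n|^2\,dx\le\liminf_{n}\|\phi_n\|_{\mu_n}^2=L\le\lambda_1(f_\Omega).
\]
Hence $L=\lambda_1(f_\Omega)$ and $\phi$ minimizes the Rayleigh quotient, so by simplicity of the positive principal eigenvalue (Remark \ref{R1.1}) and $\phi\ge0$ we get $\phi=\phi_1$; moreover $\int_{\mathbb{R}^N}|\nabla\phi_n|^2\,dx\to\int_\Omega|\nabla\phi_1|^2\,dx$, upgrading the weak $D^{1,2}$-convergence to strong convergence, while $\mu_n\int_{\mathbb{R}^N}V\phi_n^2\,dx=\lambda_{1,\mu_n}(f)-\int_{\mathbb{R}^N}|\nabla\phi_n|^2\,dx\to0$, so that $\|\phi_n-\phi_1\|_{\mu_n}^2=\int_{\mathbb{R}^N}|\nabla(\phi_n-\phi_1)|^2\,dx+\mu_n\int_{\mathbb{R}^N}V\phi_n^2\,dx\to0$. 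A routine subsequence argument removes the dependence on the particular sequence $\{\mu_n\}$, and the strict inequality from the first stage shows $\lambda_1(f_\Omega)$ is approached from below, which gives \eqref{lam1}.

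The main obstacle is the compactness needed to pass to the limit in the constraint $\int_{\mathbb{R}^N}f u^2\,dx=1$ --- that is, the weak continuity of $u\mapsto\int_{\mathbb{R}^N}f u^2\,dx$, which is precisely where $f\in L^{N/2}(\mathbb{R}^N)$ is indispensable (it renders the tail over $B_R^c$ negligible) --- together with the identification, in the second stage, of the weak limit as an element of $H_0^1(\Omega)$, which rests on $\mu_n\int_{\mathbb{R}^N}V\phi_n^2\,dx\to0$ and condition $(V2)$. The strict inequality $\lambda_{1,\mu}(f)<\lambda_1(f_\Omega)$ is the other delicate point; it depends on the positivity of the minimizer and the maximum principle, which is also what rules out the zero-extension of $\phi_1$ being a solution on the whole space.
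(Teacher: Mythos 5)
Your argument is correct, and since the paper does not prove this lemma itself but imports it from \cite[Lemma 3.2]{ZLW}, the natural benchmark is the paper's own proof of the companion Lemma \ref{L-eb-2}, which your proof parallels step by step: the direct method combined with the weak continuity of $u\mapsto\int_{\mathbb{R}^{N}}f(x)u^{2}dx$ (Lemma \ref{L2.13}) to attain the infimum in \eqref{1mu}, and, for $\mu\rightarrow\infty$, the localization of the weak limit into $H_{0}^{1}(\Omega)$ via $\mu_{n}\int_{\mathbb{R}^{N}}V\phi_{n}^{2}dx\leq\lambda_{1}(f_{\Omega})$ (the content of Lemma \ref{L3.1}) followed by comparison with the Rayleigh quotient on $\Omega$. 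Your upgrade from weak to strong $D^{1,2}$-convergence and the identity $\|\phi_{n}-\phi_{1}\|_{\mu_{n}}^{2}=\int_{\mathbb{R}^{N}}|\nabla(\phi_{n}-\phi_{1})|^{2}dx+\mu_{n}\int_{\mathbb{R}^{N}}V\phi_{n}^{2}dx$ (using $V\phi_{1}\equiv0$) are exactly what is needed for \eqref{m-5} later in the paper. The one place where you should be more careful is the strict inequality $\lambda_{1,\mu}(f)<\lambda_{1}(f_{\Omega})$: you invoke the strong maximum principle for $-\Delta u+\mu Vu=\lambda_{1,\mu}(f)f(x)u$, but outside $\Omega$ condition $(D1)$ only provides $f\in L^{N/2}$, which is the borderline exponent for Harnack-type positivity (the classical statements require a potential in $L^{q}_{loc}$ with $q>N/2$). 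What your contradiction actually requires is only weak unique continuation --- the zero-extension of $\phi_{1}$ vanishes on the nonempty open set $\mathbb{R}^{N}\setminus\overline{\Omega}$ and hence cannot be a nontrivial solution --- or, more elementarily, the observation that by Hopf's lemma $\partial\phi_{1}/\partial\nu<0$ on $\partial\Omega$, so the zero-extension fails the weak formulation when tested against functions supported near $\partial\Omega$ and therefore cannot be a minimizer of \eqref{1mu}. With that small repair the proof is complete.
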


Note that we can find the other positive eigenvalues of Eq. (\ref{eb-mu}) by
solving the following problem:
\begin{equation}
\min \left\{ \int_{\mathbb{R}^{N}}\left( |\nabla u|^{2}+\mu V(x)u^{2}\right)
dx\ |\ u\in X,\ \int_{\mathbb{R}^{N}}f(x)u^{2}dx=1\ \text{and }\langle
u,\phi _{1,\mu }\rangle _{\mu }=0\right\} .  \label{m2}
\end{equation}%
Denote
\begin{equation*}
\lambda _{2,\mu }(f)=\inf \left\{ \int_{\mathbb{R}^{N}}\left( |\nabla
u|^{2}+\mu V(x)u^{2}\right) dx\ |\ u\in X,\ \int_{\mathbb{R}%
^{N}}f(x)u^{2}dx=1\ \text{and }\langle u,\phi _{1,\mu }\rangle _{\mu
}=0\right\} .
\end{equation*}%
In order to solve problem $(\ref{m2}),$ we need some known lemmas as follows.

\begin{lemma}
\label{L2.13} (\cite[Lemma 2.13]{W}) If $N\geq 3$ and $f\in L^{N/2}(\mathbb{R%
}^{N})$, the functional $u\mapsto \int_{\mathbb{R}^{N}}f(x)u^{2}dx$ is
weakly continuous on $H^{1}(\mathbb{R}^{N}).$
\end{lemma}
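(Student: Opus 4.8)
The plan is to establish weak sequential continuity directly. Let $u_{n}\rightharpoonup u$ in $H^{1}(\mathbb{R}^{N})$; I want to show $\int_{\mathbb{R}^{N}}f(x)u_{n}^{2}\,dx\rightarrow\int_{\mathbb{R}^{N}}f(x)u^{2}\,dx$. Since weakly convergent sequences are bounded, $\{u_{n}\}$ is bounded in $H^{1}(\mathbb{R}^{N})$, so by the (merely continuous) Sobolev embedding $H^{1}(\mathbb{R}^{N})\hookrightarrow L^{2^{\ast}}(\mathbb{R}^{N})$ there is $M>0$ with $\Vert u_{n}\Vert_{2^{\ast}}\leq M$ for all $n$ and $\Vert u\Vert_{2^{\ast}}\leq M$. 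Throughout I will use the pointwise factorization $u_{n}^{2}-u^{2}=(u_{n}-u)(u_{n}+u)$ together with the elementary bound $|u_{n}^{2}-u^{2}|\leq u_{n}^{2}+u^{2}$.

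The key device is a splitting of the weight that exploits $f\in L^{N/2}(\mathbb{R}^{N})$. Fix $\varepsilon>0$. By density of $C_{c}(\mathbb{R}^{N})$ in $L^{N/2}(\mathbb{R}^{N})$, write $f=f_{1}+f_{2}$ with $f_{1}\in C_{c}(\mathbb{R}^{N})$, $\mathrm{supp}\,f_{1}\subset B_{R}$ for some ball $B_{R}$, and $\Vert f_{2}\Vert_{N/2}<\varepsilon$. For the tail term, Hölder's inequality with the conjugate exponents $N/2$ and $2^{\ast}/2$ (note $\tfrac{2}{N}+\tfrac{2}{2^{\ast}}=1$) gives
\begin{equation*}
\left\vert\int_{\mathbb{R}^{N}}f_{2}(u_{n}^{2}-u^{2})\,dx\right\vert\leq\Vert f_{2}\Vert_{N/2}\left(\Vert u_{n}\Vert_{2^{\ast}}^{2}+\Vert u\Vert_{2^{\ast}}^{2}\right)\leq 2M^{2}\varepsilon.
\end{equation*}
For the compactly supported term I would invoke the compact embedding $H^{1}(\mathbb{R}^{N})\hookrightarrow\hookrightarrow L^{2}(B_{R})$ (Rellich–Kondrachov on the bounded domain $B_{R}$), which yields $u_{n}\rightarrow u$ strongly in $L^{2}(B_{R})$ — a priori along a subsequence, but then for the whole sequence by the usual argument that every subsequence has a further subsequence converging to the same limit. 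Hence, by the Cauchy–Schwarz inequality,
\begin{equation*}
\left\vert\int_{\mathbb{R}^{N}}f_{1}(u_{n}^{2}-u^{2})\,dx\right\vert\leq\Vert f_{1}\Vert_{\infty}\Vert u_{n}-u\Vert_{L^{2}(B_{R})}\Vert u_{n}+u\Vert_{L^{2}(B_{R})}\rightarrow 0,
\end{equation*}
since the last factor stays bounded. Adding the two estimates gives $\limsup_{n}\vert\int_{\mathbb{R}^{N}}f(u_{n}^{2}-u^{2})\,dx\vert\leq 2M^{2}\varepsilon$, and letting $\varepsilon\rightarrow 0$ yields the claim.

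The main obstacle is genuinely a small one: $\mathbb{R}^{N}$ is unbounded, so local compactness of the embedding cannot by itself control the mass of $u_{n}^{2}$ near infinity. This is exactly what the hypothesis $f\in L^{N/2}$ buys us, through the Hölder pairing with $L^{2^{\ast}/2}$ and the uniform Sobolev bound, which make the far-field contribution uniformly small. The only bookkeeping care needed is the subsequence argument in the Rellich step to upgrade from subsequential to full convergence; everything else is routine.
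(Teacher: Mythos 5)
Your proof is correct and complete: the splitting $f=f_{1}+f_{2}$ with $f_{1}\in C_{c}$, the H\"{o}lder pairing of $f_{2}$ against $L^{2^{\ast }/2}$ (using the uniform Sobolev bound on $\Vert u_{n}\Vert _{2^{\ast }}$), and Rellich--Kondrachov on the support of $f_{1}$ is exactly the standard argument for this lemma, which the paper itself does not prove but simply cites from Willem's book, where essentially the same density-plus-local-compactness proof is given.
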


\begin{lemma}
\label{L3.1}(\cite[Lemma 3.1]{ZLW}) Let $\mu _{n}\rightarrow \infty $ as $%
n\rightarrow \infty $ and $\{v_{n}\}\subset X$ with $\Vert v_{n}\Vert _{\mu
_{n}}\leq C_{0}$ for some $C_{0}>0$. Then there exist a subsequence $%
\{v_{n}\}$ and $v_{0}\in H_{0}^{1}(\Omega )$ such that $v_{n}\rightharpoonup
v_{0}$ in $X$ and $v_{n}\rightarrow v_{0}$ in $L^{r}(\mathbb{R}^{N})$ for
all $2\leq r<2^{\ast }.$
\end{lemma}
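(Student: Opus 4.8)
The plan is to pass to a weak limit in $H^{1}(\mathbb{R}^{N})$, use $\mu_{n}\to\infty$ to force that limit into $H_{0}^{1}(\Omega)$, and then promote weak $L^{r}$-convergence to strong $L^{r}$-convergence by a tightness argument resting on $|\{V<c_{0}\}|<\infty$ from $(V1)$. First I would split $C_{0}^{2}\ge\Vert v_{n}\Vert_{\mu_{n}}^{2}=\Vert v_{n}\Vert_{D^{1,2}}^{2}+\mu_{n}\int_{\mathbb{R}^{N}}V(x)v_{n}^{2}\,dx$ into $\Vert v_{n}\Vert_{D^{1,2}}\le C_{0}$ and $\int_{\mathbb{R}^{N}}V(x)v_{n}^{2}\,dx\le C_{0}^{2}/\mu_{n}$, and feed both into the estimate proving $X\hookrightarrow H^{1}(\mathbb{R}^{N})$ (the inequality displayed right after the definition of $X$) to obtain $\Vert v_{n}\Vert_{2}\le C$. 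Thus $\{v_{n}\}$ is bounded in $H^{1}(\mathbb{R}^{N})$, so along a subsequence $v_{n}\rightharpoonup v_{0}$ in $H^{1}(\mathbb{R}^{N})$ for some $v_{0}$, $v_{n}\to v_{0}$ in $L^{r}_{\mathrm{loc}}(\mathbb{R}^{N})$ for every $2\le r<2^{\ast}$ by the compactness of $H^{1}(B_{R})\hookrightarrow L^{r}(B_{R})$, $v_{n}\to v_{0}$ a.e.\ in $\mathbb{R}^{N}$, and $\Vert v_{0}\Vert_{D^{1,2}}\le C_{0}$ by weak lower semicontinuity.

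Next I would show $v_{0}\equiv0$ a.e.\ on $\mathbb{R}^{N}\setminus\overline{\Omega}$. For fixed $c>0$, on $A_{c}:=\{x:V(x)\ge c\}$ one has $\int_{A_{c}}v_{n}^{2}\,dx\le\frac{1}{c}\int_{\mathbb{R}^{N}}V(x)v_{n}^{2}\,dx\le\frac{C_{0}^{2}}{c\,\mu_{n}}\to0$, so Fatou's lemma together with the a.e.\ convergence gives $\int_{A_{c}}v_{0}^{2}\,dx=0$. Since $V\ge0$ (by $(V1)$), condition $(V2)$ gives $\bigcup_{c>0}A_{c}=\{V>0\}=\mathbb{R}^{N}\setminus\overline{\Omega}$, hence $v_{0}=0$ a.e.\ there; as $\partial\Omega$ is smooth (so $|\partial\Omega|=0$), the standard characterization of $H_{0}^{1}$ gives $v_{0}\in H_{0}^{1}(\Omega)$. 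In particular $Vv_{0}^{2}\equiv0$, so $v_{0}\in X$ and the weak convergence above is the asserted convergence in $X$.

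For the strong convergence I would settle the case $r=2$ first and then interpolate. Write $\int_{\mathbb{R}^{N}}|v_{n}-v_{0}|^{2}\,dx=\int_{\{V\ge c_{0}\}}|v_{n}-v_{0}|^{2}\,dx+\int_{\{V<c_{0}\}}|v_{n}-v_{0}|^{2}\,dx$. On $\{V\ge c_{0}\}$ we have $v_{0}=0$ and $\int_{\{V\ge c_{0}\}}v_{n}^{2}\,dx\le C_{0}^{2}/(c_{0}\mu_{n})\to0$. On $E:=\{V<c_{0}\}$, which has finite measure by $(V1)$, the uniform Sobolev bound $\Vert v_{n}\Vert_{2^{\ast}}\le S^{-1}C_{0}$ and H\"{o}lder's inequality give $\int_{E\setminus B_{R}}\big(v_{n}^{2}+v_{0}^{2}\big)\,dx\le C\,|E\setminus B_{R}|^{1-2/2^{\ast}}$, which tends to $0$ as $R\to\infty$ uniformly in $n$, while $v_{n}\to v_{0}$ in $L^{2}(E\cap B_{R})$ for each fixed $R$; hence $\int_{E}|v_{n}-v_{0}|^{2}\,dx\to0$ and $v_{n}\to v_{0}$ in $L^{2}(\mathbb{R}^{N})$. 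Then, for $2<r<2^{\ast}$, the interpolation inequality $\Vert v_{n}-v_{0}\Vert_{r}\le\Vert v_{n}-v_{0}\Vert_{2}^{\theta}\Vert v_{n}-v_{0}\Vert_{2^{\ast}}^{1-\theta}$ with $\theta\in(0,1)$ solving $\frac{1}{r}=\frac{\theta}{2}+\frac{1-\theta}{2^{\ast}}$, combined with the uniform $L^{2^{\ast}}$-bound on $\{v_{n}\}$, finishes the proof. I expect this last paragraph to be the only delicate point: weak $H^{1}(\mathbb{R}^{N})$-convergence alone does not yield strong $L^{2}(\mathbb{R}^{N})$-convergence, and it is precisely the finiteness of $|\{V<c_{0}\}|$ in $(V1)$---rather than merely $V(x)\to\infty$---that supplies the tightness at infinity.
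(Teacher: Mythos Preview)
Your proof is correct. The paper does not supply its own argument for this lemma---it is quoted verbatim from \cite[Lemma~3.1]{ZLW}---so there is no in-paper proof to compare against; your approach (weak $H^{1}$-limit, vanishing of $v_{0}$ on $\{V>0\}$ via $\int Vv_{n}^{2}\le C_{0}^{2}/\mu_{n}\to 0$, tightness on the finite-measure set $\{V<c_{0}\}$, then interpolation) is the standard one and matches the proof in \cite{ZLW}. One minor remark: the step ``the weak convergence above is the asserted convergence in $X$'' deserves a word---weak $H^{1}$-convergence does not automatically imply weak $X$-convergence, but since $\int Vv_{n}^{2}\to 0$ and $Vv_{0}\equiv 0$, Cauchy--Schwarz gives $\int V v_{n}w\to 0=\int V v_{0}w$ for every $w\in X$, which together with weak $H^{1}$-convergence yields $\langle v_{n}-v_{0},w\rangle_{1}\to 0$.
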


Then we have the following result.

\begin{lemma}
\label{L-eb-2}For each $\mu >0$ there exists a function $\phi _{2,\mu }\in X$
with $\int_{\mathbb{R}^{N}}f(x)\phi _{2,\mu }^{2}dx=1$ and $\langle \phi
_{2,\mu },\phi _{1,\mu }\rangle _{\mu }=0$ such that
\begin{equation*}
\lambda _{2,\mu }(f)=\int_{\mathbb{R}^{N}}\left( |\nabla \phi _{2,\mu
}|^{2}+\mu V(x)\phi _{2,\mu }^{2}\right) dx.
\end{equation*}%
Furthermore, it holds
\begin{equation}
\frac{\lambda _{1}(f_{\Omega })+\lambda _{2}(f_{\Omega })}{2}<\lambda
_{2,\mu }(f)\quad \text{for $\mu $ sufficiently large.}  \label{lam2}
\end{equation}
\end{lemma}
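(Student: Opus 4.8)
\textbf{Proof proposal for Lemma~\ref{L-eb-2}.}
The plan is to attack the minimization problem defining $\lambda_{2,\mu}(f)$ by the direct method, working with minimizing sequences, and then to establish the asymptotic lower bound $(\ref{lam2})$ by a contradiction argument that lets $\mu\to\infty$ and invokes the compactness Lemma~\ref{L3.1} together with the eigenvalue characterizations in Remark~\ref{R1.1}. For the first part, fix $\mu>0$ and take a minimizing sequence $\{u_n\}\subset X$ with $\int_{\mathbb{R}^N}f(x)u_n^2\,dx=1$, $\langle u_n,\phi_{1,\mu}\rangle_\mu=0$, and $\|u_n\|_\mu^2\to\lambda_{2,\mu}(f)$. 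Since $\|u_n\|_\mu$ is bounded, up to a subsequence $u_n\rightharpoonup \phi_{2,\mu}$ weakly in $X$ (hence in $H^1(\mathbb{R}^N)$), and the orthogonality condition $\langle\cdot,\phi_{1,\mu}\rangle_\mu=0$ passes to the limit since it is a continuous linear functional. By Lemma~\ref{L2.13}, the map $u\mapsto\int_{\mathbb{R}^N}f(x)u^2\,dx$ is weakly continuous, so $\int_{\mathbb{R}^N}f(x)\phi_{2,\mu}^2\,dx=1$; in particular $\phi_{2,\mu}\neq 0$ and it is admissible. Weak lower semicontinuity of $\|\cdot\|_\mu^2$ then gives $\|\phi_{2,\mu}\|_\mu^2\le \liminf\|u_n\|_\mu^2=\lambda_{2,\mu}(f)$, forcing equality, so $\phi_{2,\mu}$ is a minimizer.

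For the asymptotic estimate $(\ref{lam2})$, I would argue by contradiction: suppose there is a sequence $\mu_n\to\infty$ with $\lambda_{2,\mu_n}(f)\le \tfrac{1}{2}\bigl(\lambda_1(f_\Omega)+\lambda_2(f_\Omega)\bigr)$. Let $v_n:=\phi_{2,\mu_n}$ be the corresponding minimizers, so $\|v_n\|_{\mu_n}^2=\lambda_{2,\mu_n}(f)$ is bounded. By Lemma~\ref{L3.1}, along a subsequence $v_n\rightharpoonup v_0$ in $X$ with $v_0\in H_0^1(\Omega)$ and $v_n\to v_0$ strongly in $L^r(\mathbb{R}^N)$ for all $2\le r<2^\ast$. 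Strong $L^2$-convergence combined with condition $(D1)$ (using H\"older and the $L^{N/2}$ bound on $f$, exactly as in the displayed estimate preceding Lemma~\ref{L-eb-1}) gives $\int_{\mathbb{R}^N}f(x)v_n^2\,dx\to\int_\Omega f_\Omega(x)v_0^2\,dx$, hence $\int_\Omega f_\Omega(x)v_0^2\,dx=1$; in particular $v_0\not\equiv 0$ in $\Omega$. I then want to pass the orthogonality $\langle v_n,\phi_{1,\mu_n}\rangle_{\mu_n}=0$ to a limit of the form $\int_\Omega\nabla v_0\nabla\phi_1\,dx=0$: since $\phi_{1,\mu_n}\to\phi_1$ in $X$ by Lemma~\ref{L-eb-1} and $\|v_n\|_{\mu_n}$ is bounded, the $\mu V$-part of the inner product is controlled (for $v_n$ this piece is bounded by $\|v_n\|_{\mu_n}^2$, and for $\phi_{1,\mu_n}$ it tends to a finite limit while $\phi_1$ itself vanishes on the complement of $\overline\Omega$), which together with weak convergence yields $\int_\Omega\nabla v_0\nabla\phi_1\,dx=0$. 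Thus $v_0$ is admissible in the variational characterization of $\lambda_2(f_\Omega)$ from Remark~\ref{R1.1}, so $\int_\Omega|\nabla v_0|^2\,dx\ge\lambda_2(f_\Omega)$.

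Finally, weak lower semicontinuity gives
\begin{equation*}
\int_\Omega|\nabla v_0|^2\,dx\le\liminf_{n\to\infty}\|v_n\|_{\mu_n}^2=\liminf_{n\to\infty}\lambda_{2,\mu_n}(f)\le\frac{\lambda_1(f_\Omega)+\lambda_2(f_\Omega)}{2},
\end{equation*}
which contradicts $\int_\Omega|\nabla v_0|^2\,dx\ge\lambda_2(f_\Omega)>\tfrac{1}{2}(\lambda_1(f_\Omega)+\lambda_2(f_\Omega))$ (strict because $\lambda_1(f_\Omega)<\lambda_2(f_\Omega)$). Hence $(\ref{lam2})$ holds for $\mu$ large.

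The step I expect to be the main obstacle is the passage to the limit in the orthogonality constraint $\langle v_n,\phi_{1,\mu_n}\rangle_{\mu_n}=0$: the inner product $\langle\cdot,\cdot\rangle_{\mu_n}$ itself varies with $n$ and the weight $\mu_n V$ blows up off $\overline\Omega$, so one must argue carefully that the cross terms $\mu_n\int V v_n\phi_{1,\mu_n}\,dx$ vanish in the limit (using Cauchy--Schwarz, $\mu_n\int V v_n^2\le\|v_n\|_{\mu_n}^2$ bounded, and $\mu_n\int V\phi_{1,\mu_n}^2\to 0$ which follows from $\phi_{1,\mu_n}\to\phi_1$ in $X$ and $\phi_1\in H_0^1(\Omega)$) and that $\int\nabla v_n\nabla\phi_{1,\mu_n}\,dx\to\int_\Omega\nabla v_0\nabla\phi_1\,dx$ (weak convergence of $\nabla v_n$ against the strongly convergent $\nabla\phi_{1,\mu_n}$). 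A secondary technical point is verifying $v_0\in H_0^1(\Omega)$ rather than merely $v_0\in H^1$ supported on $\overline\Omega$, but this is exactly the content of Lemma~\ref{L3.1}.
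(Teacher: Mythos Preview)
Your proposal is correct and follows essentially the same route as the paper's own proof: a direct-method argument on the minimizing sequence for the first part, and for $(\ref{lam2})$ a contradiction argument sending $\mu_n\to\infty$, invoking Lemma~\ref{L3.1} to land $v_0\in H_0^1(\Omega)$, and then the variational characterization of $\lambda_2(f_\Omega)$. The paper handles the orthogonality passage exactly as you anticipate, recording $\Vert\phi_{1,\mu_n}-\phi_1\Vert_{\mu_n}\to 0$ (which encodes both $\nabla\phi_{1,\mu_n}\to\nabla\phi_1$ in $L^2$ and $\mu_n\!\int V\phi_{1,\mu_n}^2\to 0$) and combining it with $v_n\rightharpoonup v_0$; your Cauchy--Schwarz splitting of $\langle v_n,\phi_{1,\mu_n}\rangle_{\mu_n}$ is just a more explicit version of the same step. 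One small simplification: for $\int f v_n^2\to\int_\Omega f_\Omega v_0^2$ you can quote Lemma~\ref{L2.13} directly (weak $H^1$-convergence suffices) rather than arguing via strong $L^2$-convergence and H\"older.
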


\begin{proof}
Let $\{u_{n}\}\subset X$ be a minimizing sequence of problem $(\ref{m2})$.
Clearly, it is bounded. Then there exist a subsequence $\{u_{n}\}$ and $\phi
_{2,\mu }\in X$ such that $u_{n}\rightharpoonup \phi _{2,\mu }$ in $X$,
which implies that $\langle \phi _{2,\mu },\phi _{1,\mu }\rangle _{\mu }=0$.
By Lemma \ref{L2.13} and the fact of $X\hookrightarrow H^{1}(\mathbb{R}^{N})$%
, we have
\begin{equation*}
\int_{\mathbb{R}^{N}}f(x)\phi _{2,\mu }^{2}dx=\lim_{n\rightarrow \infty
}\int_{\mathbb{R}^{N}}f(x)u_{n}^{2}dx=1.
\end{equation*}%
We now prove that $u_{n}\rightarrow \phi _{2,\mu }$ in $X$. If it is false,
then
\begin{equation*}
\int_{\mathbb{R}^{N}}(|\nabla \phi _{2,\mu }|^{2}+\mu V(x)\phi _{2,\mu
}^{2})dx<\liminf_{n\rightarrow \infty }\Vert u_{n}\Vert _{\mu }^{2}=\lambda
_{2,\mu }(f),
\end{equation*}%
which is impossible due to the definition of $\lambda _{2,\mu }(f)$. So $%
u_{n}\rightarrow \phi _{2,\mu }$ in $X$ and $\lambda _{2,\mu }(f)=\Vert \phi
_{2,\mu }\Vert _{\mu }^{2}.$

Next, we show that $\frac{1}{2}(\lambda _{1}(f_{\Omega })+\lambda
_{2}(f_{\Omega }))<\lambda _{2,\mu }(f)$ for all $\mu $ sufficiently large.
Suppose on the contrary. Then there exists a sequence $\{\lambda _{2,\mu
_{n}}(f)\}$ such that
\begin{equation*}
\lambda _{2,\mu _{n}}(f)\leq \frac{1}{2}(\lambda _{1}(f_{\Omega })+\lambda
_{2}(f_{\Omega }))\text{ as }n\rightarrow \infty.
\end{equation*}%
Let $v_{n}=\phi _{2,\mu _{n}}$ be the minimizer of $\lambda _{2,\mu
_{n}}(f). $ Then it holds $\int_{\mathbb{R}^{N}}f(x)v_{n}^{2}dx=1,\langle
v_{n},\phi _{1,\mu _{n}}\rangle _{\mu _{n}}=0$ and
\begin{equation}
\Vert v_{n}\Vert _{\mu _{n}}^{2}=\lambda _{2,\mu _{n}}(f)\leq \frac{1}{2}%
(\lambda _{1}(f_{\Omega })+\lambda _{2}(f_{\Omega })).  \label{m-2}
\end{equation}%
By $(\ref{m-2})$ and Lemma \ref{L3.1}, there exist a subsequence $\{v_{n}\}$
and $v_{0}\in H_{0}^{1}(\Omega )$ such that $v_{n}\rightharpoonup v_{0}$ in $%
X$ and $v_{n}\rightarrow v_{0}$ in $L^{r}(\mathbb{R}^{N})$ for all $2\leq
r<2^{\ast }.$ Then we have
\begin{equation}
\int_{\Omega }f_{\Omega }v_{0}^{2}dx=\lim_{n\rightarrow \infty }\int_{%
\mathbb{R}^{N}}f(x)v_{n}^{2}dx=1  \label{m-3}
\end{equation}%
and
\begin{equation}
\int_{\Omega }|\nabla v_{0}|^{2}dx=\int_{\mathbb{R}^{N}}\left( |\nabla
v_{0}|^{2}+V(x)v_{0}^{2}\right) dx\leq \liminf_{n\rightarrow \infty }\Vert
v_{n}\Vert _{\mu _{n}}^{2}\leq \frac{1}{2}(\lambda _{1}(f_{\Omega })+\lambda
_{2}(f_{\Omega })).  \label{m-4}
\end{equation}%
According to $(\ref{lam1})$, we deduce that
\begin{equation}
\Vert \phi _{1,\mu _{n}}-\phi _{1}\Vert _{\mu _{n}}\rightarrow 0\quad \text{%
as}\ n\rightarrow \infty .  \label{m-5}
\end{equation}%
It follows from $(\ref{m-2}),(\ref{m-5})$ and $v_{n}\rightharpoonup v_{0}$
in $X$ that
\begin{equation}
\int_{\Omega }\nabla v_{0}\nabla \phi _{1}dx=\lim_{n\rightarrow \infty
}\langle v_{n},\phi _{1,\mu _{n}}\rangle _{\mu _{n}}=0.  \label{m-6}
\end{equation}%
From $(\ref{m-3})$ and $(\ref{m-6})$, we conclude that $\int_{\Omega
}|\nabla v_{0}|^{2}dx\geq \lambda _{2}(f_{\Omega })$, which is a
contradiction with $(\ref{m-4})$. Consequently, this completes the proof.
\end{proof}

\section{Mountain pass geometry}

Let us start this section by recalling the well-known the mountain pass
theorem \cite{AR} as follows.

\begin{theorem}
\label{T4}Let $E$ be a Banach space, $J\in C^{1}(E,\mathbb{R}),v\in E$ and $%
\rho >0$ be such that $\Vert v\Vert >\rho $ and
\begin{equation*}
b:=\inf_{\Vert u\Vert =\rho }J(u)>J(0)\geq J(v).
\end{equation*}%
If $J$ satisfies the Palais-Smale condition at level $\alpha :=\inf_{\gamma
\in \Gamma }\max_{t\in \lbrack 0,1]}J(\gamma (t))$ with
\begin{equation*}
\Gamma :=\{\gamma \in C([0,1],E)\ |\ \gamma (0)=0,\gamma (1)=v\},
\end{equation*}%
then $\alpha $ is a critical value of $J$ and $\alpha \geq b$.
\end{theorem}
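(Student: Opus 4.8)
The plan is to derive this classical mountain pass theorem from the quantitative deformation lemma together with a connectedness argument, reasoning by contradiction. I would organize the argument into three steps, writing $\{J\le c\}$ for the sublevel sets. First I would record the easy half, $\alpha\ge b$, together with $\Gamma\neq\emptyset$ and $\alpha<\infty$: the straight segment $t\mapsto tv$ lies in $\Gamma$, and continuity of $J$ on its compact image bounds $\alpha$ from above. For any $\gamma\in\Gamma$ the scalar function $t\mapsto\Vert\gamma(t)\Vert$ is continuous with value $0$ at $t=0$ and value $\Vert v\Vert>\rho$ at $t=1$, so by the intermediate value theorem it equals $\rho$ at some $t_\gamma$; hence $\max_{[0,1]}J\circ\gamma\ge J(\gamma(t_\gamma))\ge b$, and taking the infimum over $\Gamma$ gives $\alpha\ge b$. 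In particular $J(0),J(v)<b\le\alpha$, a fact I will use to keep the endpoints of paths fixed under deformation.

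Second, I would convert the Palais–Smale hypothesis into a uniform lower bound on $\Vert J'\Vert$ near level $\alpha$. Assume for contradiction that $\alpha$ is not a critical value. If no such bound existed there would be a sequence $(u_n)$ with $J(u_n)\to\alpha$ and $J'(u_n)\to 0$; this is a Palais–Smale sequence at level $\alpha$, so by hypothesis a subsequence converges to some $u$ with $J(u)=\alpha$ and $J'(u)=0$, contradicting that $\alpha$ is not critical. Thus there are $\bar\varepsilon,\delta>0$ with $\Vert J'(u)\Vert\ge\delta$ whenever $|J(u)-\alpha|\le\bar\varepsilon$; after shrinking $\bar\varepsilon$ I also arrange $\bar\varepsilon<\alpha-\max\{J(0),J(v)\}$, which is positive by the previous paragraph.

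Third, I would invoke the deformation lemma on the band $[\alpha-\bar\varepsilon,\alpha+\bar\varepsilon]$ to obtain $\varepsilon\in(0,\bar\varepsilon)$ and a continuous $\eta\colon[0,1]\times E\to E$ with $\eta(0,\cdot)$ the identity, $\eta(t,u)=u$ whenever $J(u)\notin[\alpha-\bar\varepsilon,\alpha+\bar\varepsilon]$, and $\eta(1,\cdot)$ pushing $\{J\le\alpha+\varepsilon\}$ into $\{J\le\alpha-\varepsilon\}$. By the definition of $\alpha$, choose $\gamma\in\Gamma$ with $\max_{[0,1]}J\circ\gamma<\alpha+\varepsilon$ and set $\tilde\gamma(t):=\eta(1,\gamma(t))$, which is continuous. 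Since $J(\gamma(0))=J(0)$ and $J(\gamma(1))=J(v)$ both lie below $\alpha-\bar\varepsilon$ by Step 2, the endpoints are fixed, so $\tilde\gamma\in\Gamma$; yet $\gamma([0,1])\subset\{J\le\alpha+\varepsilon\}$ forces $\max_{[0,1]}J\circ\tilde\gamma\le\alpha-\varepsilon<\alpha$, contradicting the definition of $\alpha$. Hence $\alpha$ is a critical value, and $\alpha\ge b$ was shown in Step 1.

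The main obstacle is the deformation lemma itself: constructing a locally Lipschitz pseudo-gradient field for $J$ on $\{J'\neq 0\}$ (a partition-of-unity argument exploiting convexity of balls in $E^{*}$), truncating it so the associated flow is globally defined, and checking that the flow decreases $J$ at a definite rate inside the band — this is precisely where the gradient bound from Step 2 is consumed. A route that avoids building the flow is to apply Ekeland's variational principle to the functional $\gamma\mapsto\max_{[0,1]}J\circ\gamma$ on $\Gamma$ equipped with the uniform metric (complete because $E$ is), producing directly a Palais–Smale sequence at level $\alpha$ on which the hypothesis can be used; the delicate point there is showing that an almost-minimizing path yields points where $\Vert J'\Vert$ is small, which needs a careful local perturbation of the path near its maximum set.
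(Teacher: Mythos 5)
Your argument is correct: the intermediate-value step gives $\alpha\ge b>\max\{J(0),J(v)\}$, the Palais--Smale hypothesis upgrades ``no critical point at level $\alpha$'' to a uniform gradient bound on a band, and the deformation lemma (with endpoints fixed because $J(0),J(v)<\alpha-\bar\varepsilon$) pushes an almost-optimal path below $\alpha$, yielding the contradiction. The paper does not prove this statement at all --- it recalls it as the classical Ambrosetti--Rabinowitz mountain pass theorem with a citation --- and your proof is exactly the standard deformation-lemma argument from that classical source, so there is nothing to reconcile.
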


We say that the functional $J_{a,\lambda }^{\mu }$ satisfies Palais-Smale
condition at level $\alpha \in \mathbb{R}$ ($(PS)_{\alpha }$-condition for
short) if any sequence $\{u_{n}\}\subset X$ with $J_{a,\lambda }^{\mu
}(u_{n})\rightarrow \alpha $ and $(J_{a,\lambda }^{\mu })^{\prime
}(u_{n})\rightarrow 0$ has a convergent subsequence. Such sequence is called
a Palais-Smale sequence at $\alpha $ ($(PS)_{\alpha }$-sequence for short).

In what follows, we prove that the functional $J_{a,\lambda }^{\mu }$
satisfies the mountain pass geometry.

\begin{lemma}
\label{L1} Suppose that $N=3,4<p<6$ and conditions $(V1)-(V2),(D1)-(D2)$
hold. Then for each $a>0,$ there exists a number $\delta _{a}>0$ such that
for each $0<\lambda <\lambda _{1}(f_{\Omega })+\delta _{a},$ there exist $%
\rho _{a,\lambda }>0$ and $e_{0}\in H_{0}^{1}(\Omega )$ such that%
\begin{equation*}
\Vert e_{0}\Vert _{\mu }>\rho _{a,\lambda }\text{ and }\inf_{\Vert u\Vert
_{\mu }=\rho _{a,\lambda }}J_{a,\lambda }^{\mu }(u)>0>J_{a,\lambda }^{\mu
}(e_{0})
\end{equation*}%
for $\mu $ sufficiently large.
\end{lemma}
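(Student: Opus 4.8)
The plan is to verify the two halves of the mountain pass geometry separately, treating the local minimum near the origin and the ``descending'' direction $e_0$ in turn, and to organize the estimates so that the restriction $4<p<6$ (hence the quartic nonlocal term $\tfrac{a}{4}\|u\|_{D^{1,2}}^{4}$ dominating the quadratic terms for large $u$) and the convergence $\lambda_{1,\mu}(f)\to\lambda_1^-(f_\Omega)$ from Lemma~\ref{L-eb-1} do the work. First I would establish the local geometry: for $\lambda<\lambda_{1,\mu}(f)$ one has $\int_{\mathbb{R}^N}f(x)u^2\,dx\le \lambda_{1,\mu}(f)^{-1}\|u\|_\mu^2$ by the variational characterization \eqref{1mu}, so that
\begin{equation*}
J_{a,\lambda}^{\mu}(u)\ge \frac{1}{2}\Bigl(1-\frac{\lambda}{\lambda_{1,\mu}(f)}\Bigr)\|u\|_\mu^2-\frac{1}{p}\|g\|_\infty\int_{\mathbb{R}^N}|u|^p\,dx\ge \frac{1}{2}\Bigl(1-\frac{\lambda}{\lambda_{1,\mu}(f)}\Bigr)\|u\|_\mu^2-C\|u\|_\mu^p,
\end{equation*}
using \eqref{r2} for the $L^p$-term (valid since $2<p<2^{\ast}$ and $\mu\ge\mu_0$). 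Since $p>2$, for $\lambda$ strictly below $\lambda_{1,\mu}(f)$ this is positive on a small sphere $\|u\|_\mu=\rho_{a,\lambda}$. The subtlety is that we want this for $\lambda$ up to $\lambda_1(f_\Omega)+\delta_a$, which exceeds $\lambda_{1,\mu}(f)$; here is where I would invoke Lemma~\ref{L-eb-1}: since $\lambda_{1,\mu}(f)\to\lambda_1^-(f_\Omega)$, for any fixed $\lambda<\lambda_1(f_\Omega)$ we get $\lambda<\lambda_{1,\mu}(f)$ once $\mu$ is large, and the sphere estimate goes through for $\mu$ sufficiently large. To also cover $\lambda$ slightly above $\lambda_1(f_\Omega)$ one splits $u=t\phi_{1,\mu}+w$ with $w\in\{\mathrm{span}\,\phi_{1,\mu}\}^{\perp}$ (as announced in the introduction), controls $\int f u^2$ on the orthogonal complement by $\lambda_{2,\mu}(f)^{-1}\|w\|_\mu^2$ with $\lambda_{2,\mu}(f)$ bounded below by $\tfrac12(\lambda_1(f_\Omega)+\lambda_2(f_\Omega))$ via Lemma~\ref{L-eb-2}, and absorbs the bad $t^2$ term using the quartic nonlocal contribution, which near the origin is negligible but whose sign is favorable; the slack between $\lambda_1(f_\Omega)$ and $\lambda_2(f_\Omega)$ yields a genuine $\delta_a>0$.

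Next I would produce $e_0$. The natural choice is $e_0=t_0\phi_1$ (or $t_0\phi_{1,\mu}$) with $t_0$ large, exploiting that on $H_0^1(\Omega)$ the functional reduces to
\begin{equation*}
J_{a,\lambda}^{\mu}(t\phi_1)=\frac{a}{4}t^4\Bigl(\int_\Omega|\nabla\phi_1|^2\,dx\Bigr)^2+\frac{t^2}{2}\Bigl(\int_\Omega|\nabla\phi_1|^2\,dx-\lambda\int_\Omega f_\Omega\phi_1^2\,dx\Bigr)-\frac{t^p}{p}\int_\Omega g(x)\phi_1^p\,dx,
\end{equation*}
and since $\int_\Omega f_\Omega\phi_1^2\,dx=1$ and $\int_\Omega|\nabla\phi_1|^2\,dx=\lambda_1(f_\Omega)$, the quadratic coefficient is $\tfrac12(\lambda_1(f_\Omega)-\lambda)$. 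Because $4<p<6$, the $t^p$ term eventually dominates both the $t^4$ and $t^2$ terms as $t\to\infty$ \emph{regardless of the sign of} $\int_\Omega g(x)\phi_1^p\,dx$ — if that integral is positive it is immediate, and if it is nonpositive one instead picks a test function $\varphi\in H_0^1(\Omega)$ with $\int_\Omega g(x)|\varphi|^p\,dx>0$ and, if needed, $\int_\Omega f_\Omega\varphi^2\,dx>0$ (such $\varphi$ exists by condition $(D2)$ and the remark after the definition of $\Gamma_p$, cf.\ \cite[Proposition 6.2]{CC}), so that $J_{a,\lambda}^{\mu}(t\varphi)\to-\infty$ as $t\to\infty$. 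This is exactly the point emphasized in the introduction: in the range $4<p<6$ the quartic term does not obstruct the mountain pass, so no hypothesis on $\int_\Omega g(x)\phi_1^p\,dx$ is needed. Fix $t_0$ so large that $J_{a,\lambda}^{\mu}(t_0\varphi)<0$ and $\|t_0\varphi\|_\mu>\rho_{a,\lambda}$; note $t_0$ may depend on $a$ and $\lambda$ but that is harmless. Since $\varphi\in H_0^1(\Omega)\subset X$ has $V\equiv0$ on its support, $\|t_0\varphi\|_\mu=\|t_0\varphi\|_{H^1}$ is independent of $\mu$, so the inequality $J_{a,\lambda}^{\mu}(e_0)<0$ persists for all large $\mu$.

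The main obstacle is the uniformity in $\mu$ combined with the requirement that $\delta_a$ not collapse to zero: one must make sure the threshold $\mu$ for which $\lambda<\lambda_{1,\mu}(f)$ (or for which the $\phi_{1,\mu}$-vs-$w$ splitting works with $\lambda_{2,\mu}(f)>\tfrac12(\lambda_1(f_\Omega)+\lambda_2(f_\Omega))$) can be chosen \emph{after} $\delta_a$ is fixed, i.e.\ $\delta_a$ depends only on $a$, $p$, $\|g\|_\infty$, $\Omega$ and the eigenvalue gap, not on $\mu$. I would therefore first define $\delta_a=\min\{a^{(p-2)/(p-4)}C_1,C_2\}$ with $C_1,C_2$ extracted from the two estimates above (the $C_1$-branch coming from balancing the quartic term against the excess $\lambda-\lambda_1(f_\Omega)$ in the $t\phi_{1,\mu}$ direction, which scales like $a$ to the stated power, and the $C_2$-branch from the spectral gap), and only then let $\mu\to\infty$. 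The remaining bookkeeping — that $\rho_{a,\lambda}$ can be taken uniform in $\mu$, that the constants in \eqref{r2} are $\mu$-independent for $\mu\ge\mu_0$, and that $\phi_{1,\mu}\to\phi_1$ in $X$ controls all the cross terms — is routine given Lemmas~\ref{L-eb-1} and \ref{L-eb-2}.
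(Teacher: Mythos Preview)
Your proposal is correct and follows essentially the same route as the paper: case split at $\lambda_1(f_\Omega)$, use the variational characterization of $\lambda_{1,\mu}(f)$ for $\lambda$ below, use the orthogonal splitting $u=t\phi_{1,\mu}+w$ together with Lemma~\ref{L-eb-2} for $\lambda$ slightly above, and take $e_0=t_0\varphi$ with $\int_\Omega g|\varphi|^p>0$ so that $p>4$ forces $J_{a,\lambda}^\mu(t\varphi)\to-\infty$. The one technical step you leave implicit is how the quartic term $\tfrac{a}{4}\|u\|_{D^{1,2}}^4$ (a $D^{1,2}$-norm, not a $\|\cdot\|_\mu$-norm) actually controls $\|u\|_\mu^4$ after the $\mu$-orthogonal split; the paper handles this by expanding $\|t\phi_{1,\mu}+w\|_{D^{1,2}}^4$, estimating the cross terms via Young's inequality and $\|\phi_{1,\mu}\|_{D^{1,2}}^2\to\lambda_1(f_\Omega)$, to obtain $\|u\|_{D^{1,2}}^4\ge\tfrac{1}{16}\|u\|_\mu^4-\tfrac{137}{8}\|w\|_\mu^4$, with the $\|w\|_\mu^4$ error absorbed by the $\Lambda_0\|w\|_\mu^2$ gain on a sufficiently small sphere---this is what produces the second branch $\rho_a\le(32\Lambda_0/137a)^{1/2}$ and hence the $C_2$ in your $\delta_a$.
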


\begin{proof}
First of all, we show that there exists a number $\delta _{a}>0$ such that
for each $0<\lambda <\lambda _{1}(f_{\Omega })+\delta _{a},$ there exists a
number $\rho _{a,\lambda }>0$ such that $\inf_{\Vert u\Vert _{\mu }=\rho
_{a,\lambda }}J_{a,\lambda }^{\mu }(u)>0$ for $\mu $ sufficiently large. Now
we need to separate the proof in two cases as follows.

Case $(i):0<\lambda <\lambda _{1}(f_{\Omega })$. It follows from $(\ref{lam1}%
)$ that
\begin{equation*}
\lambda _{1,\mu }(f)\geq \frac{\lambda _{1}(f_{\Omega })+\lambda }{2}\quad
\text{for $\mu $ sufficiently large,}
\end{equation*}%
which indicates that%
\begin{equation}
\frac{1}{2}\left( 1-\frac{\lambda }{\lambda _{1,\mu }(f)}\right) \geq \frac{1%
}{2}\left( \frac{\lambda _{1}(f_{\Omega })-\lambda }{\lambda _{1}(f_{\Omega
})+\lambda }\right) \quad \text{for $\mu $ sufficiently large.}
\label{lam:1-mu}
\end{equation}%
By $(\ref{r2}),(\ref{1mu})$ and $(\ref{lam:1-mu})$, one has
\begin{equation*}
J_{a,\lambda }^{\mu }(u)\geq \frac{a}{4}\Vert u\Vert _{D^{1,2}}^{4}+\frac{1}{%
2}\left( \frac{\lambda _{1}(f_{\Omega })-\lambda }{\lambda _{1}(f_{\Omega
})+\lambda }\right) \Vert u\Vert _{\mu }^{2}-\frac{\Vert g\Vert _{\infty
}|\{V<c_{0}\}|^{(6-p)/6}}{pS^{p}}\Vert u\Vert _{\mu }^{p}
\end{equation*}%
for $\mu $ sufficiently large. Let
\begin{equation*}
\rho _{\lambda }=\left[ \frac{1}{4}\left( \frac{\lambda _{1}(f_{\Omega
})-\lambda }{\lambda _{1}(f_{\Omega })+\lambda }\right) \frac{pS^{p}}{\Vert
g\Vert _{\infty }|\{V<c_{0}\}|^{(6-p)/6}}\right] ^{1/(p-2)}>0.
\end{equation*}%
Then for all $u\in X$ with $\Vert u\Vert _{\mu }=\rho _{\lambda }$, we have
\begin{equation*}
J_{a,\lambda }^{\mu }(u)\geq \frac{1}{4}\left( \frac{\lambda _{1}(f_{\Omega
})-\lambda }{\lambda _{1}(f_{\Omega })+\lambda }\right) \rho _{\lambda
}^{2}>0,
\end{equation*}%
which indicates that $\inf_{\Vert u\Vert _{\mu }=\rho _{\lambda
}}J_{a,\lambda }^{\mu }(u)>0.$

Case $(ii):\lambda \geq \lambda _{1}(f_{\Omega })$. For each $u\in X$, by
the orthogonal decomposition theorem, there exist $t\in \mathbb{R}$ and $%
w\in X$ with $\langle w,\phi _{1,\mu }\rangle _{\mu }=0$ such that $u=t\phi
_{1,\mu }+w$. Clearly, it holds%
\begin{equation}
\Vert u\Vert _{\mu }^{2}=\lambda _{1,\mu }(f)t^{2}+\Vert w\Vert _{\mu }^{2}.
\label{12}
\end{equation}%
Moreover, we obtain
\begin{equation}
\lambda _{2,\mu }(f)\int_{\mathbb{R}^{3}}f(x)w^{2}dx\leq \Vert w\Vert _{\mu
}^{2}  \label{13}
\end{equation}%
and
\begin{equation}
\lambda _{1,\mu }(f)\int_{\mathbb{R}^{3}}f(x)\phi _{1,\mu }wdx=\int_{\mathbb{%
R}^{3}}(\nabla \phi _{1,\mu }\nabla w+\mu V(x)\phi _{1,\mu }w)dx=0.
\label{14}
\end{equation}%
For the functional $J_{a,\lambda }^{\mu }$, it follows from $(\ref{r2}),(\ref%
{12})-(\ref{14})$ that%
\begin{align}
J_{a,\lambda }^{\mu }(u)& =\frac{a}{4}\Vert u\Vert _{D^{1,2}}^{4}+\frac{1}{2}%
(\lambda _{1,\mu }(f)t^{2}+\Vert w\Vert _{\mu }^{2})  \notag \\
& \text{ \ }-\frac{\lambda }{2}\int_{\mathbb{R}^{3}}(t^{2}f(x)\phi _{1,\mu
}^{2}+2tf(x)\phi _{1,\mu }w+f(x)w^{2})dx-\frac{1}{p}\int_{\mathbb{R}%
^{3}}g(x)|u|^{p}dx  \notag \\
& \geq \frac{a}{4}\Vert u\Vert _{D^{1,2}}^{4}+\frac{1}{2}\left( 1-\frac{%
\lambda }{\lambda _{1,\mu }(f)}\right) \lambda _{1,\mu }(f)t^{2}+\frac{1}{2}%
\left( 1-\frac{\lambda }{\lambda _{2,\mu }(f)}\right) \Vert w\Vert _{\mu
}^{2}  \notag \\
& \text{ \ }-\frac{\Vert g\Vert _{\infty }|\{V<c_{0}\}|^{(6-p)/6}}{pS^{p}}%
\Vert u\Vert _{\mu }^{p}  \notag \\
& \geq \frac{a}{4}\Vert u\Vert _{D^{1,2}}^{4}-|\theta _{1,\mu }|\Vert u\Vert
_{\mu }^{2}+\left( \theta _{2,\mu }-\theta _{1,\mu }\right) \Vert w\Vert
_{\mu }^{2}-\frac{\Vert g\Vert _{\infty }|\{V<c_{0}\}|^{(6-p)/6}}{pS^{p}}%
\Vert u\Vert _{\mu }^{p},  \label{15}
\end{align}%
where
\begin{equation}
\theta _{1,\mu }:=\frac{1}{2}\left( 1-\frac{\lambda }{\lambda _{1,\mu }(f)}%
\right) \quad \text{and}\quad \theta _{2,\mu }:=\frac{1}{2}\left( 1-\frac{%
\lambda }{\lambda _{2,\mu }(f)}\right) .  \label{1-2}
\end{equation}%
Since $\lambda _{1,\mu }(f)<\lambda _{1}(f_{\Omega })$, by $(\ref{lam2})$
one has
\begin{equation}
\theta _{2,\mu }-\theta _{1,\mu }\geq \frac{1}{2}\left( 1-\frac{\lambda
_{1,\mu }(f)}{\lambda _{2,\mu }(f)}\right) \geq \frac{\lambda _{2}(f_{\Omega
})-\lambda _{1}(f_{\Omega })}{2(\lambda _{2}(f_{\Omega })+\lambda
_{1}(f_{\Omega }))}=:\Lambda _{0}  \label{17}
\end{equation}%
for $\mu $ sufficiently large. Moreover, since $\phi _{1,\mu }\rightarrow
\phi _{1}$ in $X$ as $\mu\rightarrow\infty$, we conclude that $\phi _{1,\mu }\rightarrow \phi _{1}$ in
$D^{1,2}(\mathbb{R}^{3})$ as $\mu\rightarrow\infty$, which implies that%
\begin{equation}
\Vert \phi _{1,\mu }\Vert _{D^{1,2}}^{4}\geq \frac{1}{2}\lambda
_{1}^{2}(f_{\Omega })\quad \text{for $\mu $ sufficiently large.}  \label{18}
\end{equation}%
For the non-local term, we deduce that
\begin{equation}
\left\vert 4t^{3}\Vert \phi _{1,\mu }\Vert _{D^{1,2}}^{2}\int_{\mathbb{R}%
^{3}}\nabla \phi _{1,\mu }\nabla wdx\right\vert \leq \frac{3}{4}t^{4}\Vert
\phi _{1,\mu }\Vert _{D^{1,2}}^{4}+4t^{2}\left( \int_{\mathbb{R}^{3}}\nabla
\phi _{1,\mu }\nabla wdx\right) ^{2}+16\Vert w\Vert _{D^{1,2}}^{4}
\label{16}
\end{equation}%
and
\begin{equation}
\left\vert 4t\Vert w\Vert _{D^{1,2}}^{2}\int_{\mathbb{R}^{3}}\nabla \phi
_{1,\mu }\nabla wdx\right\vert \leq 2t^{2}\Vert \phi _{1,\mu }\Vert
_{D^{1,2}}^{2}\Vert w\Vert _{D^{1,2}}^{2}+2\Vert w\Vert _{D^{1,2}}^{4}.
\label{20}
\end{equation}%
Then from $(\ref{18})-(\ref{20})$ it follows that%
\begin{align}
\Vert u\Vert _{D^{1,2}}^{4}& =\Vert t\phi _{1,\mu }+w\Vert
_{D^{1,2}}^{4}\geq \frac{\Vert \phi _{1,\mu }\Vert _{D^{1,2}}^{4}}{4\lambda
_{1,\mu }^{2}(f)}\left( \Vert u\Vert _{\mu }^{2}-\Vert w\Vert _{\mu
}^{2}\right) ^{2}-17\Vert w\Vert _{D^{1,2}}^{4}  \notag \\
& \geq \frac{1}{16}\Vert u\Vert _{\mu }^{4}-\frac{137}{8}\Vert w\Vert _{\mu
}^{4}.  \label{21}
\end{align}%
Combining $(\ref{15}),(\ref{17})$ with $(\ref{21})$, for $\mu $ sufficiently
large one has
\begin{align*}
J_{a,\lambda }^{\mu }(u)& \geq \frac{a}{64}\Vert u\Vert _{\mu }^{4}-\frac{137%
}{32}a\Vert w\Vert _{\mu }^{4}-|\theta _{1,\mu }|\Vert u\Vert _{\mu
}^{2}+\Lambda _{0}\Vert w\Vert _{\mu }^{2}-\frac{\Vert g\Vert _{\infty
}|\{V<c_{0}\}|^{(6-p)/6}}{pS^{p}}\Vert u\Vert _{\mu }^{p} \\
& =-|\theta _{1,\mu }|\Vert u\Vert _{\mu }^{2}+\Vert u\Vert _{\mu
}^{4}\left( \frac{a}{64}-\frac{\Vert g\Vert _{\infty }|\{V<c_{0}\}|^{(6-p)/6}%
}{pS^{p}}\Vert u\Vert _{\mu }^{p-4}\right) +\Vert w\Vert _{\mu }^{2}\left(
\Lambda _{0}-\frac{137}{32}a\Vert w\Vert _{\mu }^{2}\right) .
\end{align*}%
This implies that there exists a number%
\begin{equation}
\rho _{a}=\min \left\{ \left( \frac{apS^{p}}{128\Vert g\Vert _{\infty
}|\{V<c_{0}\}|^{(6-p)/6}}\right) ^{1/(p-4)},\left( \frac{32\Lambda _{0}}{137a%
}\right) ^{1/2}\right\}   \label{22}
\end{equation}%
such that for all $u\in X$ with $\Vert u\Vert _{\mu }=\rho _{a},$%
\begin{equation*}
J_{a,\lambda }^{\mu }(u)\geq -|\theta _{1,\mu }|\rho _{a}^{2}+\frac{a}{128}%
\rho _{a}^{4}.
\end{equation*}%
Thus, we deduce that
\begin{equation*}
J_{a,\lambda }^{\mu }(u)\geq \frac{a}{256}\rho _{a}^{4}>0
\end{equation*}%
for each $\lambda _{1,\mu }(f)\leq \lambda <\lambda _{1,\mu }(f)+\delta
_{a,\mu }$, where
\begin{equation}
\delta _{a,\mu }:=\frac{\lambda _{1,\mu }(f)}{128}a\rho _{a}^{2}.  \label{23}
\end{equation}%
So, according to Case $(i)-(ii),$ for each $a>0$ and $0<\lambda <\lambda
_{1,\mu }(f)+\delta _{a,\mu }$, we have
\begin{equation*}
\inf_{\Vert u\Vert _{\mu }=\rho _{a,\lambda }}J_{a,\lambda }^{\mu }(u)>0%
\text{ for }\mu \text{ sufficiently large,}
\end{equation*}%
where%
\begin{equation*}
\rho _{a,\lambda }:=\left\{
\begin{array}{ll}
\rho _{\lambda } & \text{ for }0<\lambda <\lambda _{1}(f_{\Omega }), \\
\rho _{a} & \text{ for }\lambda _{1}(f_{\Omega })\leq \lambda <\lambda
_{1}(f_{\Omega })+\delta _{a,\mu }.%
\end{array}%
\right.
\end{equation*}%
Set
\begin{equation*}
\delta _{a}=\min \left\{ a^{\frac{p-2}{p-4}}C_{1},C_{2}\right\} >0,
\end{equation*}%
where%
\begin{equation*}
C_{1}:=\frac{\lambda _{1}(f_{\Omega })}{4}\left( \frac{1}{128}\right) ^{%
\frac{p-2}{p-4}}\left( \frac{pS^{p}}{\Vert g\Vert _{\infty
}|\{V<c_{0}\}|^{(6-p)/6}}\right) ^{2/(p-4)}
\end{equation*}%
and%
\begin{equation*}
C_{2}:=\frac{\lambda _{1}(f_{\Omega })(\lambda _{2}(f_{\Omega })-\lambda
_{1}(f_{\Omega }))}{4384(\lambda _{2}(f_{\Omega })+\lambda _{1}(f_{\Omega }))%
}.
\end{equation*}%
Then by $(\ref{22})-(\ref{23}),$ we obtain that for $\mu $ sufficiently
large,%
\begin{equation*}
\lambda _{1}(f_{\Omega })+\delta _{a}\leq \lambda _{1,\mu }(f)+2\delta
_{a}\leq \lambda _{1,\mu }(f)+\delta _{a,\mu }.
\end{equation*}%
Hence, for each $a>0$ and $0<\lambda <\lambda _{1}(f_{\Omega })+\delta _{a}$
it holds
\begin{equation*}
\inf_{\Vert u\Vert _{\mu }=\rho _{a,\lambda }}J_{a,\lambda }^{\mu }(u)>0%
\text{ for }\mu \text{ sufficiently large.}
\end{equation*}%
Next, we show that there exists $e_{0}\in H_{0}^{1}(\Omega )$ such that $%
\Vert e_{0}\Vert _{\mu }>\rho _{a,\lambda }$ and $J_{a,\lambda }^{\mu
}(e_{0})<0$. Owing to condition $(D2)$, we can take $\varphi \in
H_{0}^{1}(\Omega )$ such that $\int_{\mathbb{R}^{3}}g(x)|\varphi |^{p}dx>0$.
Then for any $t>0$, we have%
\begin{equation*}
J_{a,\lambda }^{\mu }(t\varphi )=\frac{1}{2}\left( \Vert \varphi \Vert _{\mu
}^{2}-\lambda \int_{\mathbb{R}^{3}}f(x)\varphi ^{2}dx\right) t^{2}+\frac{a}{4%
}\Vert \varphi \Vert _{D^{1,2}}^{4}t^{4}-\frac{\int_{\mathbb{R}%
^{3}}g(x)|\varphi |^{p}dx}{p}t^{p}.
\end{equation*}%
This implies that there exists $t_{0}>0$ such that $\Vert t_{0}\varphi \Vert
_{\mu }>\rho _{a,\lambda }$ and $J_{a,\lambda }^{\mu }(t_{0}\varphi )<0$.
Consequently, we complete the proof.
\end{proof}

\begin{lemma}
\label{L2} Suppose that $N\geq 3,2<p<\min \{4,2^{\ast }\}$ and conditions $%
(V1)-(V2),(D1)-(D2)$ hold. Then for each $0<a<a_{0}(p),$ we have the
following results.\newline
$(i)$ For each $0<\lambda <\lambda _{1}(f_{\Omega }),$ there exist a number $%
\overline{\rho }_{a,\lambda }>0$ and $e_{0}\in H_{0}^{1}(\Omega )$ such that
\begin{equation}
\Vert e_{0}\Vert _{\mu }>\overline{\rho }_{a,\lambda }\text{ and }%
\inf_{\Vert u\Vert _{\mu }=\overline{\rho }_{a,\lambda }}J_{a,\lambda }^{\mu
}(u)>0>J_{a,\lambda }^{\mu }(e_{0})  \label{24}
\end{equation}%
for $\mu $ sufficiently large.\newline
$(ii)$ If $\int_{\Omega }g(x)\phi _{1}^{p}dx<0,$ then there exists a number $%
\overline{\delta }_{a}>0$ such that for each $\lambda _{1}(f_{\Omega })\leq
\lambda <\lambda _{1}(f_{\Omega })+\overline{\delta }_{a},$ there exist $%
\overline{\rho }_{a,\lambda }>0$ and $e_{0}\in H_{0}^{1}(\Omega )$ such that
$(\ref{24})$ holds for $\mu $ sufficiently large.
\end{lemma}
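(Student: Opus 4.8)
We must produce, for $\mu$ large, a radius $\overline{\rho}_{a,\lambda}>0$ with $\inf_{\Vert u\Vert_{\mu}=\overline{\rho}_{a,\lambda}}J^{\mu}_{a,\lambda}>0$ and a point $e_{0}\in H_{0}^{1}(\Omega)$ with $\Vert e_{0}\Vert_{\mu}>\overline{\rho}_{a,\lambda}$ and $J^{\mu}_{a,\lambda}(e_{0})<0$. The point $e_{0}$ is built the same way in $(i)$ and $(ii)$. Since $0<a<a_{0}(p)$, a direct computation gives $\tfrac{p}{4-p}\bigl(\tfrac{a(4-p)}{2(p-2)}\bigr)^{(p-2)/2}<\Gamma_{p}$, so by the definition of $\Gamma_{p}$ we may fix $\varphi\in H_{0}^{1}(\Omega)$ with $\int_{\Omega}f_{\Omega}\varphi^{2}dx\geq0$, $\int_{\Omega}g|\varphi|^{p}dx>0$ and $\int_{\Omega}g|\varphi|^{p}dx\,\bigl(\int_{\Omega}|\nabla\varphi|^{2}dx\bigr)^{-p/2}>\tfrac{p}{4-p}\bigl(\tfrac{a(4-p)}{2(p-2)}\bigr)^{(p-2)/2}$. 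As $\varphi\in H_{0}^{1}(\Omega)$ gives $\Vert t\varphi\Vert_{\mu}^{2}=t^{2}\int_{\Omega}|\nabla\varphi|^{2}dx$ and $\int_{\mathbb{R}^{N}}f\varphi^{2}dx=\int_{\Omega}f_{\Omega}\varphi^{2}dx$, for $t>0$
\[
J^{\mu}_{a,\lambda}(t\varphi)=\frac{a}{4}\Bigl(\int_{\Omega}|\nabla\varphi|^{2}dx\Bigr)^{2}t^{4}+\frac{B}{2}t^{2}-\frac{\int_{\Omega}g|\varphi|^{p}dx}{p}\,t^{p},\qquad B:=\int_{\Omega}|\nabla\varphi|^{2}dx-\lambda\int_{\Omega}f_{\Omega}\varphi^{2}dx.
\]
Using the variational characterisation of $\lambda_{1}(f_{\Omega})$ (Remark~\ref{R1.1}) one has $0<B\leq\int_{\Omega}|\nabla\varphi|^{2}dx$ for $0<\lambda<\lambda_{1}(f_{\Omega})$ (case $(i)$), and also for $\lambda\in[\lambda_{1}(f_{\Omega}),\lambda_{1}(f_{\Omega})+\overline{\delta}_{a})$ once $\overline{\delta}_{a}$ is taken small (case $(ii)$: $\varphi$ is not a multiple of $\phi_{1}$ since $\int_{\Omega}g|\varphi|^{p}dx>0>\int_{\Omega}g\phi_{1}^{p}dx$, hence $\int_{\Omega}|\nabla\varphi|^{2}dx/\int_{\Omega}f_{\Omega}\varphi^{2}dx>\lambda_{1}(f_{\Omega})$ strictly). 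Minimising $t\mapsto\tfrac{a}{4}(\int_{\Omega}|\nabla\varphi|^{2}dx)^{2}t^{4-p}+\tfrac{B}{2}t^{2-p}$ over $t>0$ and substituting $B\leq\int_{\Omega}|\nabla\varphi|^{2}dx$, the chosen inequality on $\varphi$ is exactly what makes $\min_{t>0}J^{\mu}_{a,\lambda}(t\varphi)<0$; the set $\{t>0:J^{\mu}_{a,\lambda}(t\varphi)<0\}$ is a bounded interval whose right endpoint is bounded below by a positive number uniform in the admissible range of $\lambda$. Picking $t_{0}$ in that interval with $t_{0}\Vert\varphi\Vert_{D^{1,2}}$ exceeding the eventual $\overline{\rho}_{a,\lambda}$, we set $e_{0}:=t_{0}\varphi$.

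\emph{Case $(i)$, positivity on a sphere.} Since $0<\lambda<\lambda_{1}(f_{\Omega})$ and $\lambda_{1,\mu}(f)\to\lambda_{1}^{-}(f_{\Omega})$ by Lemma~\ref{L-eb-1}, for $\mu$ large $\tfrac12\bigl(1-\tfrac{\lambda}{\lambda_{1,\mu}(f)}\bigr)\geq\Lambda_{1}:=\tfrac12\cdot\tfrac{\lambda_{1}(f_{\Omega})-\lambda}{\lambda_{1}(f_{\Omega})+\lambda}>0$. Combining $\int_{\mathbb{R}^{N}}fu^{2}dx\leq\Vert u\Vert_{\mu}^{2}/\lambda_{1,\mu}(f)$ (immediate from $(\ref{1mu})$) with $(\ref{r2})$ for $r=p$ and $\tfrac{a}{4}\Vert u\Vert_{D^{1,2}}^{4}\geq0$, we obtain $J^{\mu}_{a,\lambda}(u)\geq\Lambda_{1}\Vert u\Vert_{\mu}^{2}-C_{g}\Vert u\Vert_{\mu}^{p}$ with $C_{g}:=\Vert g\Vert_{\infty}|\{V<c_{0}\}|^{(2^{\ast}-p)/2^{\ast}}/(pS^{p})$; as $p>2$, this is positive on the whole punctured ball $\{0<\Vert u\Vert_{\mu}<\rho_{\ast}\}$ with $\rho_{\ast}:=(\Lambda_{1}/C_{g})^{1/(p-2)}$. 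Taking any $\overline{\rho}_{a,\lambda}\in(0,\rho_{\ast})$, the inequality $J^{\mu}_{a,\lambda}(e_{0})<0$ automatically forces $\Vert e_{0}\Vert_{\mu}\geq\rho_{\ast}>\overline{\rho}_{a,\lambda}$, which completes $(i)$.

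\emph{Case $(ii)$, positivity on a sphere.} Assume $\int_{\Omega}g\phi_{1}^{p}dx<0$ and $\lambda\in[\lambda_{1}(f_{\Omega}),\lambda_{1}(f_{\Omega})+\overline{\delta}_{a})$. Now $\lambda\geq\lambda_{1}(f_{\Omega})>\lambda_{1,\mu}(f)$, so $\theta_{1,\mu}<0$ and the previous estimate degenerates along $\phi_{1,\mu}$; we argue along the lines of Case $(ii)$ of Lemma~\ref{L1}. Decomposing $u=t\phi_{1,\mu}+w$ with $\langle w,\phi_{1,\mu}\rangle_{\mu}=0$ and running the computation leading to $(\ref{15})$ but keeping the nonlinear term unbounded, and using $(\ref{13})$ and $(\ref{17})$, we get for $\mu$ large
\[
J^{\mu}_{a,\lambda}(u)\ \geq\ \frac{a}{4}\Vert u\Vert_{D^{1,2}}^{4}-|\theta_{1,\mu}|\Vert u\Vert_{\mu}^{2}+\Lambda_{0}\Vert w\Vert_{\mu}^{2}-\frac{1}{p}\int_{\mathbb{R}^{N}}g|u|^{p}dx.
\]
Here the hypothesis $\int_{\Omega}g\phi_{1}^{p}dx<0$ is used: since $\phi_{1,\mu}\to\phi_{1}$ in $X$ (Lemma~\ref{L-eb-1}), $\int_{\mathbb{R}^{N}}g\phi_{1,\mu}^{p}dx\to\int_{\Omega}g\phi_{1}^{p}dx<0$, so $\int_{\mathbb{R}^{N}}g\phi_{1,\mu}^{p}dx\leq-\kappa<0$ for $\mu$ large, while $\bigl|\int_{\mathbb{R}^{N}}g(|u|^{p}-|t\phi_{1,\mu}|^{p})dx\bigr|\leq C(|t|^{p-1}\Vert w\Vert_{\mu}+\Vert w\Vert_{\mu}^{p})$ by $\bigl||x+y|^{p}-|x|^{p}\bigr|\leq C(|x|^{p-1}|y|+|y|^{p})$, H\"older's inequality, $(\ref{r2})$ and the boundedness of $\Vert\phi_{1,\mu}\Vert_{p}$. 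Absorbing $|t|^{p-1}\Vert w\Vert_{\mu}$ by Young's inequality into $\tfrac{\kappa}{2p}|t|^{p}$ plus a multiple of $\Vert w\Vert_{\mu}^{p}$, and then all $\Vert w\Vert_{\mu}^{p}$-terms into $\Lambda_{0}\Vert w\Vert_{\mu}^{2}$ for $\Vert w\Vert_{\mu}$ small, we reach $J^{\mu}_{a,\lambda}(u)\geq-|\theta_{1,\mu}|\Vert u\Vert_{\mu}^{2}+\tfrac12\Lambda_{0}\Vert w\Vert_{\mu}^{2}+\tfrac{\kappa}{2p}|t|^{p}$. Fix $\overline{\rho}_{a,\lambda}:=\rho_{0}$ small enough that these absorptions hold on $\{\Vert u\Vert_{\mu}=\rho_{0}\}$ and that $\rho_{0}$ lies below the uniform lower bound from the $e_{0}$-construction. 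On that sphere $\lambda_{1,\mu}(f)t^{2}+\Vert w\Vert_{\mu}^{2}=\rho_{0}^{2}$: if $\Vert w\Vert_{\mu}^{2}\geq\tfrac12\rho_{0}^{2}$ the right side is $\geq(\tfrac14\Lambda_{0}-|\theta_{1,\mu}|)\rho_{0}^{2}$, and if $\lambda_{1,\mu}(f)t^{2}\geq\tfrac12\rho_{0}^{2}$ then $|t|^{p}\geq c\rho_{0}^{p}$ for a $\mu$-independent $c>0$ and the right side is $\geq\tfrac{\kappa c}{2p}\rho_{0}^{p}-|\theta_{1,\mu}|\rho_{0}^{2}$. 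Hence $\inf_{\Vert u\Vert_{\mu}=\rho_{0}}J^{\mu}_{a,\lambda}>0$ provided $|\theta_{1,\mu}|<\eta_{0}:=\min\{\tfrac14\Lambda_{0},\tfrac{\kappa c}{2p}\rho_{0}^{p-2}\}$. Since $|\theta_{1,\mu}|=\tfrac{\lambda-\lambda_{1,\mu}(f)}{2\lambda_{1,\mu}(f)}$ and $\lambda_{1,\mu}(f)\to\lambda_{1}(f_{\Omega})$, taking $\overline{\delta}_{a}$ small (and further shrinking it if needed for $B>0$ above and for $\lambda<\lambda_{2,\mu}(f)$) makes $|\theta_{1,\mu}|<\eta_{0}$ for all $\lambda\in[\lambda_{1}(f_{\Omega}),\lambda_{1}(f_{\Omega})+\overline{\delta}_{a})$ and $\mu$ large, proving $(ii)$.

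\textbf{Main obstacle.} The delicate point is the mountain near the origin in $(ii)$: for $\lambda\geq\lambda_{1}(f_{\Omega})$ the quadratic part of $J^{\mu}_{a,\lambda}$ is nonpositive along $\phi_{1,\mu}$, so positivity on a sphere must be extracted from the higher--order terms — precisely from the positive $\tfrac{\kappa}{2p}|t|^{p}$ contribution that the sign condition $\int_{\Omega}g\phi_{1}^{p}dx<0$ creates in the bad direction. This forces a quantitative match: the admissible window width $\overline{\delta}_{a}$ must be comparable to $\rho_{0}^{p-2}$, while $\rho_{0}$ is simultaneously constrained from above by the error absorptions and by the requirement $\Vert e_{0}\Vert_{\mu}>\rho_{0}$; threading all these inequalities (and checking the claimed uniformity of the $t$-interval on which $J^{\mu}_{a,\lambda}(t\varphi)<0$) is the main bookkeeping to be carried out.
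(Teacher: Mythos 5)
Your proposal is correct and follows essentially the same route as the paper: the same construction of $e_{0}$ from the definition of $\Gamma_{p}$ and the hypothesis $a<a_{0}(p)$, the same direct estimate $J_{a,\lambda }^{\mu }(u)\geq \tfrac12\bigl(\tfrac{\lambda _{1}(f_{\Omega })-\lambda }{\lambda _{1}(f_{\Omega })+\lambda }\bigr)\Vert u\Vert _{\mu }^{2}-C\Vert u\Vert _{\mu }^{p}$ in part $(i)$, and in part $(ii)$ the same decomposition $u=t\phi _{1,\mu }+w$ with the sign condition producing a positive $|t|^{p}$ term, a mean-value/Young absorption of the cross terms, and $\overline{\delta }_{a}$ extracted from $\lambda _{1,\mu }(f)\rightarrow \lambda _{1}^{-}(f_{\Omega })$. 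The only deviations are cosmetic (your ``$J(e_{0})<0$ forces $\Vert e_{0}\Vert _{\mu }\geq \rho _{\ast }$'' shortcut and the case split on the sphere in place of the paper's inequality $(\Vert u\Vert _{\mu }^{2}-\Vert w\Vert _{\mu }^{2})^{p/2}\geq 2^{-(p-2)/2}\Vert u\Vert _{\mu }^{p}-\Vert w\Vert _{\mu }^{p}$), and both are sound.
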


\begin{proof}
$(i)$ It follows from $(\ref{r2}),(\ref{1mu})$ and $(\ref{lam:1-mu})$ that
\begin{eqnarray}
J_{a,\lambda }^{\mu }(u) &\geq &\frac{a}{4}\Vert u\Vert _{D^{1,2}}^{4}+\frac{%
1}{2}\left( 1-\frac{\lambda }{\lambda _{1,\mu }(f)}\right) \Vert u\Vert
_{\mu }^{2}-\frac{\Vert g\Vert _{\infty }|\{V<c_{0}\}|^{(2^{\ast
}-p)/2^{\ast }}}{pS^{p}}\Vert u\Vert _{\mu }^{p}  \notag \\
&\geq &\frac{a}{4}\Vert u\Vert _{D^{1,2}}^{4}+\frac{1}{2}\left( \frac{%
\lambda _{1}(f_{\Omega })-\lambda }{\lambda _{1}(f_{\Omega })+\lambda }%
\right) \Vert u\Vert _{\mu }^{2}-\frac{\Vert g\Vert _{\infty
}|\{V<c_{0}\}|^{(2^{\ast }-p)/2^{\ast }}}{pS^{p}}\Vert u\Vert _{\mu }^{p}.
\label{4-0}
\end{eqnarray}%
Let
\begin{equation}
\overline{\rho }_{a,\lambda }=\min \{\overline{\rho }_{\lambda },\overline{%
\rho }_{a}\}>0,  \label{4.0}
\end{equation}%
where
\begin{equation}
\overline{\rho }_{\lambda }:=\left[ \frac{1}{4}\left( \frac{\lambda
_{1}(f_{\Omega })-\lambda }{\lambda _{1}(f_{\Omega })+\lambda }\right) \frac{%
pS^{p}}{\Vert g\Vert _{\infty }|\{V<c_{0}\}|^{(2^{\ast }-p)/2^{\ast }}}%
\right] ^{1/(p-2)}  \label{4.9}
\end{equation}%
and
\begin{equation}
\overline{\rho }_{a}:=\left( \frac{(p-2)\Gamma _{p}}{ap}\right) ^{1/(4-p)}.
\label{4.2}
\end{equation}%
Then by $(\ref{4-0}),$ for all $u\in X$ with $\Vert u\Vert _{\mu }=\overline{%
\rho }_{a,\lambda }$ one has%
\begin{equation*}
J_{a,\lambda }^{\mu }(u)\geq \frac{1}{4}\left( \frac{\lambda _{1}(f_{\Omega
})-\lambda }{\lambda _{1}(f_{\Omega })+\lambda }\right) \overline{\rho }%
_{a,\lambda }^{2}>0.
\end{equation*}%
Since $a<a_{0}(p)$, there exists $\varphi _{a}\in H_{0}^{1}(\Omega )$ with $%
\int_{\Omega }f_{\Omega }(x)\varphi _{a}^{2}dx\geq 0$ and $\int_{\Omega
}g(x)|\varphi _{a}|^{p}dx>0$ such that
\begin{equation}
a<2(p-2)\left( 4-p\right) ^{\frac{4-p}{p-2}}\left( \frac{\int_{\Omega
}g(x)|\varphi _{a}|^{p}dx}{p\left( \int_{\Omega }|\nabla \varphi
_{a}|^{2}dx\right) ^{p/2}}\right) ^{2/(p-2)}\leq a_{0}(p).  \label{e1}
\end{equation}%
Let
\begin{equation*}
t_{a}=\left( \frac{(2p-4)\Gamma _{p}}{ap}\right) ^{1/(4-p)}\left(
\int_{\Omega }|\nabla \varphi _{a}|^{2}dx\right) ^{-1/2}.
\end{equation*}%
Then by $(\ref{4.0})$ and $(\ref{e1})$, we have $\Vert t_{a}\varphi
_{a}\Vert _{\mu }>\overline{\rho }_{a,\lambda }$ and
\begin{align*}
& J_{a,\lambda }^{\mu }(t_{a}\varphi _{a}) \\
& =\frac{t_{a}^{2}}{2}\left( \int_{\Omega }|\nabla \varphi
_{a}|^{2}dx-\lambda \int_{\Omega }f_{\Omega }\varphi _{a}^{2}dx\right) +%
\frac{a}{4}\left( \int_{\Omega }|\nabla \varphi _{a}|^{2}dx\right)
^{2}t_{a}^{4}-\frac{\int_{\Omega }g(x)|\varphi _{a}|^{p}dx}{p}t_{a}^{p} \\
& =\frac{t_{a}^{2}}{2}\left[ \int_{\Omega }|\nabla \varphi
_{a}|^{2}dx-\lambda \int_{\Omega }f_{\Omega }(x)\varphi _{a}^{2}dx-\frac{%
(4-p)\int_{\Omega }g(x)|\varphi _{a}|^{p}dx}{p}\left( \frac{%
2(p-2)\int_{\Omega }g(x)|\varphi _{a}|^{p}dx}{ap\left( \int_{\Omega }|\nabla
\varphi _{a}|^{2}dx\right) ^{2}}\right) ^{\frac{p-2}{4-p}}\right]  \\
& =\frac{t_{a}^{2}}{2}\left[ \int_{\Omega }|\nabla \varphi _{a}|^{2}dx\left(
1-\frac{a(4-p)}{2(p-2)}\left( \frac{2(p-2)\int_{\Omega }g(x)|\varphi
_{a}|^{p}dx}{ap\left( \int_{\Omega }|\nabla \varphi _{a}|^{2}dx\right) ^{p/2}%
}\right) ^{2/(4-p)}\right) -\lambda \int_{\Omega }f_{\Omega }(x)\varphi
_{a}^{2}dx\right]  \\
& <0.
\end{align*}%
$(ii)$ For each $u\in X$, by the orthogonal decomposition theorem, there
exist $t\in \mathbb{R}$ and $w\in X$ with $\langle w,\phi _{1,\mu }\rangle
_{\mu }=0$ such that $u=t\phi _{1,\mu }+w$. Using the same process in $(\ref%
{15})$ and $(\ref{17})$ gives%
\begin{eqnarray}
J_{a,\lambda }^{\mu }(u) &\geq &\frac{a}{4}\Vert u\Vert
_{D^{1,2}}^{4}-|\theta _{1,\mu }|\Vert u\Vert _{\mu }^{2}+\Lambda _{0}\Vert
w\Vert _{\mu }^{2}  \notag \\
&&-\frac{1}{p}\int_{\mathbb{R}^{N}}g(x)\left\vert t\phi _{1,\mu }\right\vert
^{p}dx-\frac{1}{p}\int_{\mathbb{R}^{N}}g(x)\left( \left\vert t\phi _{1,\mu
}+w\right\vert ^{p}-\left\vert t\phi _{1,\mu }\right\vert ^{p}\right) dx
\label{4.1}
\end{eqnarray}%
for $\mu $ sufficiently large, where $\theta _{1,\mu }$ and $\Lambda _{0}$
are as in $(\ref{1-2})$ and $(\ref{17}),$ respectively. By $(\ref{r2})$ and $%
(\ref{lam1})$, we conclude that $\int_{\mathbb{R}^{N}}g(x)\phi _{1,\mu
}^{p}dx\rightarrow \int_{\Omega }g(x)\phi _{1}^{p}dx$ as $\mu \rightarrow
\infty $, which implies that%
\begin{equation}
\int_{\mathbb{R}^{N}}g(x)\phi _{1,\mu }^{p}dx\leq \frac{1}{2}\int_{\Omega
}g(x)\phi _{1}^{p}dx<0\quad \text{for $\mu $ sufficiently large.}
\label{4.4}
\end{equation}%
By the mean value theorem, there exists $0<\theta <1$ such that%
\begin{equation}
\frac{1}{p}\int_{\mathbb{R}^{N}}g(x)(\left\vert t\phi _{1,\mu }+w\right\vert
^{p}-\left\vert t\phi _{1,\mu }\right\vert ^{p})dx=\int_{\mathbb{R}%
^{N}}g(x)\left\vert t\phi _{1,\mu }+\theta w\right\vert ^{p-2}(t\phi _{1,\mu
}+\theta w)wdx.  \label{4.5}
\end{equation}%
Using Young's inequality and $(\ref{r2})$ leads to%
\begin{eqnarray}
&&\left\vert \int_{\mathbb{R}^{N}}g(x)\left\vert t\phi _{1,\mu }+\theta
w\right\vert ^{p-2}\left( t\phi _{1,\mu }+\theta w\right) wdx\right\vert
\notag \\
&\leq &2^{p-2}\Vert g\Vert _{\infty }\int_{\mathbb{R}^{N}}(|t\phi _{1,\mu
}|^{p-1}+|\theta w|^{p-1})|w|dx  \notag \\
&\leq &2^{p-2}\Vert g\Vert _{\infty }\int_{\mathbb{R}^{N}}\left( \frac{p-1}{p%
}B^{p/(p-1)}|t\phi _{1,\mu }|^{p}+\frac{1}{pB^{p}}|w|^{p}+|w|^{p}\right) dx
\notag \\
&\leq &\frac{2^{p-2}\Vert g\Vert _{\infty }|\{V<c_{0}\}|^{\frac{2^{\ast }-p}{%
2^{\ast }}}}{pS^{p}}\left( B^{p/(p-1)}(p-1)|t|^{p}\Vert \phi _{1,\mu }\Vert
_{\mu }^{p}+\frac{(1+pB^{p})}{B^{p}}\Vert w\Vert _{\mu }^{p}\right)   \notag
\\
&\leq &\frac{\left\vert \int_{\Omega }g(x)\phi _{1}^{p}dx\right\vert }{4p}%
|t|^{p}+\frac{2^{p-2}\Vert g\Vert _{\infty }(1+pB^{p})|\{V<c_{0}\}|^{\frac{%
2^{\ast }-p}{2^{\ast }}}}{pB^{p}S^{p}}\Vert w\Vert _{\mu }^{p}\text{ for }%
\mu \text{ sufficiently large,}  \label{4.6}
\end{eqnarray}%
where%
\begin{equation*}
B:=\left( \frac{S^{p}|\int_{\Omega }g(x)\phi _{1}^{p}dx|}{2^{p}\Vert g\Vert
_{\infty }(p-1)|\{V<c_{0}\}|^{(2^{\ast }-p)/2^{\ast }}\lambda
_{1}^{p/2}(f_{\Omega })}\right) ^{(p-1)/p}.
\end{equation*}%
Thus, it follows from $(\ref{4.1})-(\ref{4.6})$ that
\begin{align}
J_{a,\lambda }^{\mu }(u)& \geq \frac{a}{4}\Vert u\Vert
_{D^{1,2}}^{4}-|\theta _{1,\mu }|\Vert u\Vert _{\mu }^{2}+\Lambda _{0}\Vert
w\Vert _{\mu }^{2}+\frac{\left\vert \int_{\Omega }g(x)\phi
_{1}^{p}dx\right\vert }{2p}|t|^{p}  \notag \\
& \text{ \ }-\frac{1}{p}\int_{\mathbb{R}^{N}}g(x)(|t\phi _{1,\mu
}+w|^{p}-|t\phi _{1,\mu }|^{p})dx  \notag \\
& \geq -|\theta _{1,\mu }|\Vert u\Vert _{\mu }^{2}+\Lambda _{0}\Vert w\Vert
_{\mu }^{2}+\frac{\left\vert \int_{\Omega }g(x)\phi _{1}^{p}dx\right\vert }{%
4p\lambda _{1}^{p/2}(f_{\Omega })}\left( \frac{\Vert u\Vert _{\mu }^{p}}{%
2^{(p-2)/2}}-\Vert w\Vert _{\mu }^{p}\right)   \notag \\
& \text{ \ }-\frac{2^{p-2}\Vert g\Vert _{\infty }(1+pB^{p})|\{V<c_{0}\}|^{%
\frac{2^{\ast }-p}{2^{\ast }}}}{pB^{p}S^{p}}\Vert w\Vert _{\mu }^{p}  \notag
\\
& \geq -|\theta _{1,\mu }|\Vert u\Vert _{\mu }^{2}+\frac{\left\vert
\int_{\Omega }g(x)\phi _{1}^{p}dx\right\vert }{2^{(p+2)/2}p\lambda
_{1}^{p/2}(f_{\Omega })}\Vert u\Vert _{\mu }^{p}  \notag \\
& \text{ \ }+\Vert w\Vert _{\mu }^{2}\left[ \Lambda _{0}-\left( \frac{%
\left\vert \int_{\Omega }g(x)\phi _{1}^{p}dx\right\vert }{4p\lambda
_{1}^{p/2}(f_{\Omega })}+\frac{2^{p-2}\Vert g\Vert _{\infty
}(1+pB^{p})|\{V<c_{0}\}|^{\frac{2^{\ast }-p}{2^{\ast }}}}{pB^{p}S^{p}}%
\right) \Vert w\Vert _{\mu }^{p-2}\right] .  \label{4.7}
\end{align}%
Let%
\begin{equation*}
\overline{\rho }_{a,\lambda }=\min \{\rho _{0},\overline{\rho }_{a}\}\text{
and }\overline{\delta }_{a,\mu }=\frac{|\int_{\Omega }g(x)\phi
_{1}^{p}dx|\lambda _{1,\mu }(f)}{2^{(p+2)/2}p\lambda _{1}^{p/2}(f_{\Omega })}%
\overline{\rho }_{a,\lambda }^{p-2},
\end{equation*}%
where%
\begin{equation*}
\rho _{0}:=\Lambda _{0}^{1/(p-2)}\left( \frac{|\int_{\Omega }g(x)\phi
_{1}^{p}dx|}{4p\lambda _{1}^{p/2}(f_{\Omega })}+\frac{2^{p-2}\Vert g\Vert
_{\infty }(1+pB^{p})|\{V<c_{0}\}|^{(2^{\ast }-p)/2^{\ast }}}{pB^{p}S^{p}}%
\right) ^{-1/(p-2)}
\end{equation*}%
and $\overline{\rho }_{a}$ is as $(\ref{4.2}).$ Then by $(\ref{4.7}),$ for
all $u\in X$ with $\Vert u\Vert _{\mu }=\overline{\rho }_{a,\lambda }$ one
has
\begin{equation}
J_{a,\lambda }^{\mu }(u)\geq -|\theta _{1,\mu }|\overline{\rho }_{a,\lambda
}^{2}+\frac{|\int_{\Omega }g(x)\phi _{1}^{p}dx|}{2^{(p+2)/2}p\lambda
_{1}^{p/2}(f_{\Omega })}\overline{\rho }_{a,\lambda }^{p}\geq \frac{%
|\int_{\Omega }g(x)\phi _{1}^{p}dx|}{2^{(p+4)/2}p\lambda
_{1}^{p/2}(f_{\Omega })}\overline{\rho }_{a,\lambda }^{p}  \label{4.8}
\end{equation}%
for each $\lambda _{1,\mu }(f)\leq \lambda <\lambda _{1,\mu }(f)+\overline{%
\delta }_{a,\mu }$. Set%
\begin{equation*}
\overline{\delta }_{a}=\frac{|\int_{\Omega }g(x)\phi _{1}^{p}dx|}{%
2^{(p+6)/2}p\lambda _{1}^{(p-2)/2}(f_{\Omega })}\overline{\rho }_{a,\lambda
}^{p-2}.
\end{equation*}%
Since $\lambda _{1,\mu }(f)\rightarrow \lambda _{1}^{-}(f_{\Omega })$ as $%
\mu \rightarrow \infty $, we conclude that
\begin{equation}
\lambda _{1}(f_{\Omega })+\overline{\delta }_{a}\leq \lambda _{1,\mu }(f)+2%
\overline{\delta }_{a}\leq \lambda _{1,\mu }(f)+\overline{\delta }_{a,\mu }.
\label{4.3}
\end{equation}%
Hence, it follows from $(\ref{4.8})$ and $(\ref{4.3})$ that for each $%
\lambda _{1}(f_{\Omega })\leq \lambda <\lambda _{1}(f_{\Omega })+\overline{%
\delta }_{a},$
\begin{equation*}
\inf_{\Vert u\Vert _{\mu }=\overline{\rho }_{a,\lambda }}J_{a,\lambda }^{\mu
}(u)>0\text{ for }\mu \text{ sufficiently large.}
\end{equation*}%
Next, repeating the same argument as in $(i)$, we can conclude that there
exists $e_{0}\in H_{0}^{1}(\Omega )$ such that $\Vert e_{0}\Vert _{\mu }>%
\overline{\rho }_{a,\lambda }$ and $J_{a,\lambda }^{\mu }(e_{0})<0$.
Consequently, we complete the proof.
\end{proof}

\begin{lemma}
\label{L3} Suppose that $N\geq 3,2<p<\min \{4,2^{\ast }\}$ and conditions $%
(V1)-(V2),(D1)-(D2)$ hold. If $\int_{\Omega }g(x)\phi _{1}^{p}dx>0$, then
for each $a\geq a_{0}(p)$ and $\lambda _{a}^{+}<\lambda <\lambda
_{1}(f_{\Omega })$, there exist $\widehat{\rho }_{a,\lambda }>0$ and $%
e_{0}\in H_{0}^{1}(\Omega )$ such that
\begin{equation*}
\Vert e_{0}\Vert _{\mu }>\widehat{\rho }_{a,\lambda }\text{ and }\inf_{\Vert
u\Vert _{\mu }=\widehat{\rho }_{a,\lambda }}J_{a,\lambda }^{\mu
}(u)>0>J_{a,\lambda }(e_{0})\text{ for }\mu \text{ sufficiently large,}
\end{equation*}
where $\lambda _{a}^{+}$ is as in $(\ref{lam:a+}).$
\end{lemma}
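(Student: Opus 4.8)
The plan is to follow the two-part template of the proof of Lemma~\ref{L2}~$(i)$ --- a ``positivity on a small sphere'' part and a ``valley'' part --- but in the valley part to test $J_{a,\lambda}^{\mu}$ along the ray generated by the principal eigenfunction $\phi_1$ itself, so that the (now positive) quantity $\int_\Omega g(x)\phi_1^p\,dx$ enters explicitly and the admissible range $\lambda_a^{+}<\lambda<\lambda_1(f_\Omega)$ is dictated by an elementary one-variable minimization.

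For the first part I would observe that, since $0<\lambda<\lambda_1(f_\Omega)$, Lemma~\ref{L-eb-1} gives $\lambda_{1,\mu}(f)\ge\tfrac12(\lambda_1(f_\Omega)+\lambda)$ for $\mu$ large, hence $(\ref{lam:1-mu})$ holds; combining it with $(\ref{r2})$ and $(\ref{1mu})$ exactly as in $(\ref{4-0})$ yields
\begin{equation*}
J_{a,\lambda}^{\mu}(u)\ \ge\ \frac{a}{4}\Vert u\Vert_{D^{1,2}}^{4}+\frac12\left(\frac{\lambda_1(f_\Omega)-\lambda}{\lambda_1(f_\Omega)+\lambda}\right)\Vert u\Vert_{\mu}^{2}-\frac{\Vert g\Vert_\infty|\{V<c_0\}|^{(2^{\ast}-p)/2^{\ast}}}{pS^{p}}\Vert u\Vert_{\mu}^{p}
\end{equation*}
for $\mu$ large. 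Since $p>2$, dropping the nonnegative quartic term shows that, with $\widehat{\rho}_{a,\lambda}:=\overline{\rho}_\lambda$ as in $(\ref{4.9})$, one has $J_{a,\lambda}^{\mu}(u)\ge\tfrac14\big(\tfrac{\lambda_1(f_\Omega)-\lambda}{\lambda_1(f_\Omega)+\lambda}\big)r^{2}>0$ whenever $\Vert u\Vert_\mu=r$ with $0<r\le\widehat{\rho}_{a,\lambda}$. Note that this step uses neither the sign of $\int_\Omega g\phi_1^{p}\,dx$ nor the lower bound $a\ge a_0(p)$.

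For the second part I would test $J_{a,\lambda}^{\mu}$ along $t\phi_1$, $t>0$. Using $\phi_1\in H_0^1(\Omega)$, $V\equiv0$ on $\Omega$, $\int_\Omega|\nabla\phi_1|^2\,dx=\lambda_1(f_\Omega)$ and $\int_{\mathbb{R}^{N}}f\phi_1^{2}\,dx=1$, one obtains the $\mu$-independent expression
\begin{equation*}
J_{a,\lambda}^{\mu}(t\phi_1)=\psi(t):=\frac{t^{2}}{2}\big(\lambda_1(f_\Omega)-\lambda\big)+\frac{a}{4}\lambda_1^{2}(f_\Omega)\,t^{4}-\frac{t^{p}}{p}\int_\Omega g(x)\phi_1^{p}\,dx.
\end{equation*}
Writing $\psi(t)=t^{2}\big(A+Bt^{2}-Ct^{p-2}\big)$ with $A=\tfrac12(\lambda_1(f_\Omega)-\lambda)>0$, $B=\tfrac a4\lambda_1^{2}(f_\Omega)>0$ and $C=\tfrac1p\int_\Omega g\phi_1^{p}\,dx>0$, the fact that $2<p<4$ makes the bracket coercive with a unique interior minimizer $t_\ast=\big(2(p-2)\int_\Omega g\phi_1^{p}\,dx/(ap\lambda_1^{2}(f_\Omega))\big)^{1/(4-p)}$, and a direct computation gives $A+Bt_\ast^{2}-Ct_\ast^{p-2}=\tfrac12(\lambda_a^{+}-\lambda)$. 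Hence for $\lambda_a^{+}<\lambda<\lambda_1(f_\Omega)$ we get $J_{a,\lambda}^{\mu}(t_\ast\phi_1)=\tfrac12 t_\ast^{2}(\lambda_a^{+}-\lambda)<0$. Setting $e_0:=t_\ast\phi_1\in H_0^1(\Omega)$ and recalling that the first part gives $J_{a,\lambda}^{\mu}>0$ on every sphere of radius at most $\widehat{\rho}_{a,\lambda}$, while $J_{a,\lambda}^{\mu}(e_0)<0$ and $e_0\neq0$, we conclude $\Vert e_0\Vert_\mu>\widehat{\rho}_{a,\lambda}$, completing the proof. The hypothesis $a\ge a_0(p)$ is used only to guarantee $\lambda_a^{+}\ge0$, so that the interval $(\lambda_a^{+},\lambda_1(f_\Omega))$ is nonempty and consistent with the standing requirement $\lambda>0$.

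The hard part will be the identity $A+Bt_\ast^{2}-Ct_\ast^{p-2}=\tfrac12(\lambda_a^{+}-\lambda)$: one must evaluate $\psi$ at $t_\ast$, collect the resulting product of powers of $p-2$, $4-p$, $a$, $\lambda_1(f_\Omega)$ and $\int_\Omega g\phi_1^{p}\,dx$ (the exponent relation $\tfrac{2}{4-p}-\tfrac{p-2}{4-p}=1$ is what makes everything cancel), and recognize the outcome as $\tfrac12\big(\lambda_1(f_\Omega)-\lambda_a^{+}\big)$ via the definition $(\ref{lam:a+})$ of $\lambda_a^{+}$. Once this bookkeeping is carried out, the remaining pieces are exactly the estimates already established in the proof of Lemma~\ref{L2}.
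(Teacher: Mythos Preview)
Your proposal is correct and follows essentially the same route as the paper: the sphere positivity via the estimate $(\ref{4-0})$ for $\lambda<\lambda_1(f_\Omega)$, and the valley via the ray $t\mapsto t\phi_1$ with the minimizer $t_\ast$ yielding $J_{a,\lambda}^{\mu}(t_\ast\phi_1)=\tfrac{t_\ast^{2}}{2}(\lambda_a^{+}-\lambda)$. The only cosmetic difference is that the paper takes $\widehat{\rho}_{a,\lambda}=\min\{\overline{\rho}_\lambda,\widehat{\rho}_a\}$ with $\widehat{\rho}_a=\big((p-2)\int_\Omega g\phi_1^{p}/(ap\lambda_1^{p/2}(f_\Omega))\big)^{1/(4-p)}$ and verifies $\Vert t_\ast\phi_1\Vert_\mu=2^{1/(4-p)}\widehat{\rho}_a>\widehat{\rho}_{a,\lambda}$ by direct computation, whereas you take $\widehat{\rho}_{a,\lambda}=\overline{\rho}_\lambda$ and deduce $\Vert e_0\Vert_\mu>\widehat{\rho}_{a,\lambda}$ indirectly from $J_{a,\lambda}^{\mu}(e_0)<0$ together with positivity on the whole punctured ball; both arguments are valid.
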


\begin{proof}
Let%
\begin{equation*}
\widehat{\rho }_{a,\lambda }:=\min \left\{ \overline{\rho }_{\lambda },%
\widehat{\rho }_{a}\right\} ,
\end{equation*}%
where $\overline{\rho }_{\lambda }$ is as in $(\ref{4.9})$ and%
\begin{equation*}
\widehat{\rho }_{a}:=\left( \frac{(p-2)\int_{\Omega }g(x)\phi _{1}^{p}dx}{%
ap\lambda _{1}^{p/2}(f_{\Omega })}\right) ^{1/(4-p)}.
\end{equation*}%
Then, similar to the argument in Lemma \ref{L2}$(i)$, for all $u\in X$ with $%
\Vert u\Vert _{\mu }=\widehat{\rho }_{a,\lambda }$ one has%
\begin{eqnarray*}
J_{a,\lambda }^{\mu }(u) &\geq &\frac{a}{4}\Vert u\Vert _{D^{1,2}}^{4}+\frac{%
1}{2}\left( \frac{\lambda _{1}(f_{\Omega })-\lambda }{\lambda _{1}(f_{\Omega
})+\lambda }\right) \Vert u\Vert _{\mu }^{2}-\frac{\Vert g\Vert _{\infty
}|\{V<c_{0}\}|^{(2^{\ast }-p)/2^{\ast }}}{pS^{p}}\Vert u\Vert _{\mu }^{p} \\
&\geq &\frac{1}{4}\left( \frac{\lambda _{1}(f_{\Omega })-\lambda }{\lambda
_{1}(f_{\Omega })+\lambda }\right) \widehat{\rho }_{a,\lambda }^{2}>0\text{
for }\mu \text{ sufficiently large.}
\end{eqnarray*}%
We set
\begin{equation*}
t_{a}=\left( \frac{2(p-2)\int_{\Omega }g(x)\phi _{1}^{p}dx}{ap\lambda
_{1}^{2}(f_{\Omega })}\right) ^{1/(4-p)}.
\end{equation*}%
Then it holds $\Vert t_{a}\phi _{1}\Vert _{\mu }>\widehat{\rho }_{a,\lambda
} $ and
\begin{align*}
J_{a,\lambda }(t_{a}\phi _{1})& =\frac{\lambda _{1}(f_{\Omega })-\lambda }{2}%
t_{a}^{2}+\frac{a\lambda _{1}^{2}(f_{\Omega })}{4}t_{a}^{4}-\frac{%
\int_{\Omega }g(x)\phi _{1}^{p}dx}{p}t_{a}^{p} \\
& =\frac{t_{a}^{2}}{2}(\lambda _{a}^{+}-\lambda )<0\text{ for }\lambda
>\lambda _{a}^{+}.
\end{align*}%
Consequently, we complete the proof.
\end{proof}

Finally, we state the compactness condition for the functional $J_{a,\lambda
}^{\mu },$ which has been proved in \cite{ZLW}.

\begin{lemma}
\label{PS-C}(\cite[Proposition 2.5]{ZLW}) Suppose that $N\geq 3,2<p<2^{\ast
} $ and conditions $(V1)-(V2),(D1)-(D2)$ hold. In addition, for $N=3$, we
also assume that condition $(H3)$ holds. Let $\alpha \in \mathbb{R}$ and $%
\{u_{n}\}$ be a $(PS)_{\alpha }$-sequence for $J_{a,\lambda }^{\mu }$. If
there exists $d_{0}>0$ such that $\Vert u_{n}\Vert _{\mu }<d_{0},$ then $%
\{u_{n}\}$ strongly converges in $X_{\mu }$ up to subsequence for $\mu $
sufficiently large.
\end{lemma}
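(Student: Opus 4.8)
The plan is to upgrade the bounded $(PS)_{\alpha}$-sequence to a strongly convergent one in two stages: first regain the compactness lost on the unbounded domain by proving strong convergence in $L^{p}(\mathbb{R}^{N})$ for $\mu$ large, and then run a norm-comparison argument on the Hilbert space $(X,\langle\cdot,\cdot\rangle_{\mu})$. Since $\|u_{n}\|_{\mu}<d_{0}$ and $X\hookrightarrow H^{1}(\mathbb{R}^{N})$ continuously, $\{u_{n}\}$ is bounded in $H^{1}(\mathbb{R}^{N})$; passing to a subsequence, $u_{n}\rightharpoonup u$ in $X$ and in $H^{1}(\mathbb{R}^{N})$, $u_{n}\to u$ in $L^{r}_{\mathrm{loc}}(\mathbb{R}^{N})$ for $2\le r<2^{\ast}$, and $u_{n}\to u$ a.e.; moreover the limit $A:=\lim_{n\to\infty}\|u_{n}\|_{D^{1,2}}^{2}$ exists along the subsequence, with $A\ge\|u\|_{D^{1,2}}^{2}$ by weak lower semicontinuity.

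The key step is the global $L^{p}$-convergence. I would split $\mathbb{R}^{N}\setminus B_{R}=\big(\{V\ge c_{0}\}\setminus B_{R}\big)\cup\big(\{V<c_{0}\}\setminus B_{R}\big)$. On the first piece, condition $(V1)$ gives $\int_{\{V\ge c_{0}\}}u_{n}^{2}\,dx\le(\mu c_{0})^{-1}\|u_{n}\|_{\mu}^{2}\le(\mu c_{0})^{-1}d_{0}^{2}$, which, interpolated with the uniform bound on $\|u_{n}\|_{2^{\ast}}$, bounds $\int_{\{V\ge c_{0}\}}|u_{n}|^{p}\,dx$ by a quantity small uniformly in $n$ once $\mu$ is large. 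On the second piece, $|\{V<c_{0}\}\setminus B_{R}|\to0$ as $R\to\infty$ because $\{V<c_{0}\}$ has finite measure, so by Hölder's inequality against $\|u_{n}\|_{2^{\ast}}$ (using $p<2^{\ast}$) the integral $\int_{\{V<c_{0}\}\setminus B_{R}}|u_{n}|^{p}\,dx$ is small uniformly in $n$ for $R$ large. Together with $u_{n}\to u$ in $L^{p}(B_{R})$, this yields $u_{n}\to u$ in $L^{p}(\mathbb{R}^{N})$ for $\mu$ sufficiently large; for $N=3$ the weighted tail $\int_{\mathbb{R}^{N}\setminus B_{R}}g(x)|u_{n}|^{p}\,dx$ is controlled in the same spirit, using condition $(H3)$ to tie the decay of $g$ to that of $V$. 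Consequently $\int g(x)|u_{n}|^{p}\,dx\to\int g(x)|u|^{p}\,dx$ and, using a.e. convergence together with the continuity of $v\mapsto|v|^{p-2}v$ from $L^{p}$ into $L^{p/(p-1)}$, also $\int g(x)|u_{n}|^{p-2}u_{n}\varphi\,dx\to\int g(x)|u|^{p-2}u\varphi\,dx$ for every $\varphi\in X$; meanwhile $\int f(x)u_{n}^{2}\,dx\to\int f(x)u^{2}\,dx$ by Lemma \ref{L2.13} and $\int f(x)u_{n}u\,dx\to\int f(x)u^{2}\,dx$ since that functional is linear in $u_{n}$.

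Finally I would test the relations $\langle(J_{a,\lambda}^{\mu})'(u_{n}),u\rangle\to0$ and $\langle(J_{a,\lambda}^{\mu})'(u_{n}),u_{n}\rangle\to0$ (both hold because $(J_{a,\lambda}^{\mu})'(u_{n})\to0$ in $X^{\ast}$ while $u$ is fixed and $\{u_{n}\}$ is bounded) and pass to the limit. The first gives $aA\|u\|_{D^{1,2}}^{2}+\|u\|_{\mu}^{2}-\lambda\int f(x)u^{2}\,dx-\int g(x)|u|^{p}\,dx=0$, and the second gives $aA^{2}+\lim_{n\to\infty}\|u_{n}\|_{\mu}^{2}-\lambda\int f(x)u^{2}\,dx-\int g(x)|u|^{p}\,dx=0$. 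Subtracting, $\lim_{n\to\infty}\|u_{n}\|_{\mu}^{2}-\|u\|_{\mu}^{2}=aA\big(\|u\|_{D^{1,2}}^{2}-A\big)\le0$, since $a>0$, $A\ge0$, and $A\ge\|u\|_{D^{1,2}}^{2}$; but weak lower semicontinuity forces $\|u\|_{\mu}^{2}\le\lim_{n\to\infty}\|u_{n}\|_{\mu}^{2}$. Hence $\|u_{n}\|_{\mu}\to\|u\|_{\mu}$, which combined with $u_{n}\rightharpoonup u$ in the Hilbert space $(X,\langle\cdot,\cdot\rangle_{\mu})$ gives $u_{n}\to u$ strongly in $X_{\mu}$, as desired.

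The main obstacle is precisely the global $L^{p}$-convergence, i.e. ruling out escape of mass to infinity; this is where the steepness of the well (taking $\mu$ large) and condition $(V1)$ are indispensable, and where $(H3)$ is needed when $N=3$, since otherwise $X\hookrightarrow L^{p}(\mathbb{R}^{N})$ fails to be compact and the $(PS)$-condition may break down. A secondary technical nuisance is that the nonlocal coefficient $\|u_{n}\|_{D^{1,2}}^{2}$ need not converge to $\|u\|_{D^{1,2}}^{2}$; carrying the limit $A$ explicitly through the two testing identities and exploiting the sign of $aA(\|u\|_{D^{1,2}}^{2}-A)$ is what finally closes the argument.
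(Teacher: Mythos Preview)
The paper does not give its own proof of this lemma; it is quoted verbatim from \cite[Proposition~2.5]{ZLW} and no argument is supplied here. So there is nothing in the paper to compare your proposal against line by line. That said, your outline is a correct and standard proof of this compactness result: the splitting of $\mathbb{R}^{N}\setminus B_{R}$ into $\{V\ge c_{0}\}$ (controlled by the steepness parameter $\mu$) and $\{V<c_{0}\}\setminus B_{R}$ (controlled by the finite measure in $(V1)$) recovers $L^{p}$-compactness, and your final norm-comparison via the two testings $\langle(J_{a,\lambda}^{\mu})'(u_{n}),u\rangle$ and $\langle(J_{a,\lambda}^{\mu})'(u_{n}),u_{n}\rangle$, carrying the limit $A=\lim\|u_{n}\|_{D^{1,2}}^{2}$ explicitly, correctly handles the nonlocal coefficient and forces $\|u_{n}\|_{\mu}\to\|u\|_{\mu}$.

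One remark: your tail argument for $L^{p}$-convergence actually works uniformly for all $N\ge 3$ and all $2<p<2^{\ast}$ without invoking $(H3)$; since $g\in L^{\infty}$, the convergence $\int g|u_{n}|^{p}\to\int g|u|^{p}$ follows immediately from $u_{n}\to u$ in $L^{p}$. Your parenthetical that $(H3)$ is used ``in the same spirit'' for $N=3$ is therefore unnecessary in your scheme (and indeed the paper applies this lemma in Theorem~\ref{T1}, where $N=3$ and $4<p<6$, \emph{without} assuming $(H3)$). The hypothesis $(H3)$ in the lemma statement is inherited from \cite{ZLW}, where it is used elsewhere; for the bounded-$(PS)$ compactness itself it is not essential to the approach you wrote down.
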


\section{The Proof of Theorem \protect\ref{T1}}

\textbf{We now prove Theorem \ref{T1}:} By Lemma \ref{L1}, for each $a>0$
and $\mu $ is sufficiently large, there exists $\delta _{a}>0$ such that the
functional $J_{a,\lambda }^{\mu }$ has the mountain pass geometry whenever $%
0<\lambda <\lambda _{1}(f_{\Omega })+\delta _{a}$. Let
\begin{equation*}
\alpha _{\mu }=\inf_{\gamma \in \Gamma }\max_{0\leq s\leq 1}J_{a,\lambda
}^{\mu }(\gamma (s))\quad \text{with}\quad \Gamma =\left\{ \gamma \in
C\left( [0,1],X\right) \ |\ \gamma (0)=0,\gamma (1)=e_{0}\right\} ,
\end{equation*}%
where $e_{0}$ is as in Lemma \ref{L1}. It is clear that
\begin{equation*}
0<\alpha _{\mu }\leq \max_{0\leq s\leq 1}J_{a,\lambda }^{\mu }(se_{0})=:D_{0}
\end{equation*}%
and that $D_{0}$ is independent of $\mu $ due to $e_{0}\in H_{0}^{1}(\Omega
) $. Let $\{u_{n}\}$ be a $(PS)_{\alpha _{\mu }}$-sequence, that is $%
J_{a,\lambda }^{\mu }(u_{n})=\alpha _{\mu }+o(1)$ and $\left( J_{a,\lambda
}^{\mu }\right) ^{\prime }(u_{n})=o(1)$. In fact, since $J_{a,\lambda }^{\mu
}(u_{n})=J_{a,\lambda }^{\mu }(\left\vert u_{n}\right\vert )$ for all $n,$
we may assume that $u_{n}\geq 0.$ Then we have
\begin{align}
\alpha _{\mu }p+1& \geq pJ_{a,\lambda }^{\mu }(u_{n})-\langle \left(
J_{a,\lambda }^{\mu }\right) ^{\prime }(u_{n}),u_{n}\rangle  \notag \\
& =\frac{a(p-4)}{4}\Vert u_{n}\Vert _{D^{1,2}}^{4}+\frac{p-2}{2}\Vert
u_{n}\Vert _{\mu }^{2}-\frac{\lambda (p-2)}{2}\int_{\mathbb{R}%
^{3}}f(x)u_{n}^{2}dx.  \label{4-1}
\end{align}%
Using condition $(D1)$ and Young's inequality gives
\begin{align}
\frac{\lambda (p-2)}{2}\int_{\mathbb{R}^{3}}f(x)u_{n}^{2}dx& \leq \frac{%
\lambda (p-2)}{2}S^{-2}\Vert f\Vert _{3/2}\Vert u_{n}\Vert _{D^{1,2}}^{2}
\notag \\
& \leq \frac{a(p-4)}{4}\Vert u_{n}\Vert _{D^{1,2}}^{4}+\frac{\lambda
^{2}(p-2)^{2}\Vert f\Vert _{3/2}^{2}}{4(p-4)aS^{4}}.  \label{4-2}
\end{align}%
Combining $(\ref{4-1})$ with $(\ref{4-2})$ leads to
\begin{equation*}
D_{0}p+1\geq \alpha _{\mu }p+1\geq \frac{p-2}{2}\Vert u_{n}\Vert _{\mu }^{2}-%
\frac{\lambda ^{2}(p-2)^{2}\Vert f\Vert _{3/2}^{2}}{4(p-4)aS^{4}},
\end{equation*}%
which indicates that there exists $d_{0}>0$ such that $\Vert u_{n}\Vert
_{\mu }<d_{0}$ for $\mu $ sufficiently large. Thus, by Lemma \ref{PS-C}, the
functional $J_{a,\lambda }^{\mu }$ satisfies the $(PS)_{\alpha _{\mu }}$%
-condition. Hence, there exists $0\leq u_{0}^{\left( 1\right) }\in X$ such
that $J_{a,\lambda }^{\mu }(u_{0}^{\left( 1\right) })=\alpha _{\mu }$ and $%
(J_{a,\lambda }^{\mu })^{\prime }(u_{0}^{\left( 1\right) })=0$ for $\mu $
sufficiently large, this implies that $u_{0}^{\left( 1\right) }$ is a
nontrivial nonnegative solution of Eq. $\left( K_{a,\lambda }^{\mu }\right)
. $ The strong Maximum Principle implies that $u_{0}^{\left( 1\right) }>0$
in $\mathbb{R}^{3}.$ Therefore, the proof of part $(i)$ is completed.

To prove part $(ii)$, we consider the infimum of $J_{a,\lambda }^{\mu }$ on
the closed ball $B_{\rho _{a,\lambda }}:=\{u\in X\ |\ \Vert u\Vert _{\mu
}\leq \rho _{a,\lambda }\}$ with $\rho _{a,\lambda }$ being as in Lemma \ref%
{L1}. Note that $\rho _{a,\lambda }$ is independent of $\mu $ for $\lambda
_{1}(f_{\Omega })<\lambda <\lambda _{1}(f_{\Omega })+\delta _{a}$. Set
\begin{equation*}
\beta _{\mu }=\inf_{\Vert u\Vert _{\mu }\leq \rho _{a,\lambda }}J_{a,\lambda
}^{\mu }(u).
\end{equation*}%
Let
\begin{equation*}
J_{a,\lambda }^{\mu }(t\phi _{1})=-\frac{\lambda -\lambda _{1}(f_{\Omega })}{%
2}t^{2}+\frac{a\lambda _{1}^{2}(f_{\Omega })}{4}t^{4}-\frac{\int_{\mathbb{R}%
^{3}}g(x)\phi _{1}^{p}dx}{p}t^{p}\text{ for }t>0.
\end{equation*}%
Then for each $\lambda >\lambda _{1}(f_{\Omega })$, there exists $t_{0}>0$
such that $\Vert t_{0}\phi _{1}\Vert _{\mu }\leq \rho _{a,\lambda }$ and $%
J_{a,\lambda }^{\mu }(t_{0}\phi _{1})<0$. Moreover, we have
\begin{align*}
J_{a,\lambda }^{\mu }(u)& \geq -\frac{\lambda \Vert f\Vert _{3/2}}{2S^{2}}%
\Vert u\Vert _{\mu }^{2}-\frac{\Vert g\Vert _{\infty }|\{V<c_{0}\}|^{(6-p)/6}%
}{pS^{p}}\Vert u\Vert _{\mu }^{p} \\
& \geq -\frac{\lambda \Vert f\Vert _{3/2}}{2S^{2}}\rho _{a,\lambda }^{2}-%
\frac{\Vert g\Vert _{\infty }|\{V<c_{0}\}|^{(6-p)/6}}{pS^{p}}\rho
_{a,\lambda }^{p},
\end{align*}%
which implies that $-\infty <\beta _{\mu }<0$. By the Ekeland variational
principle \cite{E} and $J_{a,\lambda }^{\mu }(u)=J_{a,\lambda }^{\mu
}(\left\vert u\right\vert )$, there exists a $(PS)_{\beta _{\mu }}$-sequence
$\{u_{n}\}\subset B_{\rho _{a,\lambda }}$ with $u_{n}\geq 0$ in $\mathbb{R}%
^{3}.$ Then by Lemma \ref{PS-C}, there exists $0\leq u_{0}^{\left( 2\right)
}\in X$ such that $J_{a,\lambda }^{\mu }(u_{0}^{\left( 2\right) })=\beta
_{\mu }<0$ and $(J_{a,\lambda }^{\mu })^{\prime }(u_{0}^{\left( 2\right)
})=0 $ for $\mu $ sufficiently large, this implies that $u_{0}^{\left(
2\right) }$ is a nontrivial nonnegative solution of Eq. $\left( K_{a,\lambda
}^{\mu }\right) .$ The strong Maximum Principle implies that $u_{0}^{\left(
2\right) }>0$ in $\mathbb{R}^{3}.$ Consequently, this completes the proof of
Theorem \ref{T1}.

\section{Proofs of Theorems \protect\ref{T2} and \protect\ref{T3}}

We start this section by showing that the functional $J_{a,\lambda }^{\mu }$
is coercive and bounded below on $X$ when $N\geq 3$ and $2<p<\min
\{4,2^{\ast }\}.$

\begin{lemma}
\label{L5} Suppose that $N\geq 3,2<p<\min \{4,2^{\ast }\}$ and conditions $%
(V1)-(V2),(D1)-(D2)$ hold. In addition, we assume that condition $(H3)$
holds for $N=3$. Then for each $a>0$ and $\lambda >0$,
\begin{equation*}
J_{a,\lambda }^{\mu }(u)\geq \frac{1}{4}\Vert u\Vert _{\mu
}^{2}-C_{N,a,\lambda }
\end{equation*}%
for $\mu $ sufficiently large, where the number $C_{N,a,\lambda }>0$ is
independent of $\mu $.
\end{lemma}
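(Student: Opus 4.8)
The plan is to estimate each term of $J_{a,\lambda}^{\mu}$ from below using the Sobolev-type inequalities $(\ref{r1})$--$(\ref{r2})$ already established, and to absorb the indefinite quadratic and superlinear terms into the $\frac14\|u\|_{\mu}^{2}$ part at the cost of an additive constant. First I would bound the two ``good'' terms trivially: $\frac{a}{4}\|u\|_{D^{1,2}}^{4}\ge 0$ and $\frac12\|u\|_{\mu}^{2}$ is kept. Next, the linear term: by condition $(D1)$ and H\"older's inequality, $\left|\frac{\lambda}{2}\int_{\mathbb R^{N}}f(x)u^{2}dx\right|\le \frac{\lambda}{2}\|f\|_{N/2}S^{-2}\|u\|_{D^{1,2}}^{2}\le \frac{\lambda}{2}\|f\|_{N/2}S^{-2}\|u\|_{\mu}^{2}$. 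This is the place where, for $N=3$, one must be a little careful — but since the estimate only uses $f\in L^{N/2}$ and the continuous embedding $X\hookrightarrow D^{1,2}(\mathbb R^{N})$, condition $(H3)$ is not actually needed for this lemma; it is listed only for uniformity with Lemma \ref{PS-C}. The point is that this term is controlled by a fixed multiple of $\|u\|_{\mu}^{2}$, which is \emph{not} small, so it cannot be absorbed into $\frac12\|u\|_{\mu}^{2}$ alone.

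The key idea to handle this is the nonlocal term $\frac{a}{4}\|u\|_{D^{1,2}}^{4}$: I would split $\frac{\lambda}{2}\|f\|_{N/2}S^{-2}\|u\|_{D^{1,2}}^{2}$ via Young's inequality as $\varepsilon\|u\|_{D^{1,2}}^{4}+C(\varepsilon)$ for a suitable $\varepsilon\le a/4$, so that the quadratic-in-$\|u\|_{D^{1,2}}^{2}$ piece is swallowed by $\frac{a}{4}\|u\|_{D^{1,2}}^{4}$, leaving behind a constant $C_{N,a,\lambda}^{(1)}$ depending only on $N,a,\lambda,\|f\|_{N/2},S$. Then for the superlinear term, since $2<p<\min\{4,2^{\ast}\}$, inequality $(\ref{r2})$ gives $\frac1p\left|\int_{\mathbb R^{N}}g(x)|u|^{p}dx\right|\le \frac{\|g\|_{\infty}|\{V<c_{0}\}|^{(2^{\ast}-p)/2^{\ast}}}{pS^{p}}\|u\|_{\mu}^{p}$, and because $p<4$ I would again apply Young's inequality with exponents $\frac{4}{p}$ and $\frac{4}{4-p}$ to write $\|u\|_{\mu}^{p}\le \varepsilon'\|u\|_{\mu}^{4}+C(\varepsilon')$; but here there is no $\|u\|_{\mu}^{4}$ term available in $J_{a,\lambda}^{\mu}$, so instead I would split against $\frac14\|u\|_{\mu}^{2}$: since $p>2$, Young's inequality with exponents $\frac{2}{p-2}$ and $\frac{2}{4-p}$ — wait, more directly, write $\|u\|_\mu^p = \|u\|_\mu^2\cdot\|u\|_\mu^{p-2}$ and use Young on $\|u\|_\mu^2\cdot\|u\|_\mu^{p-2}$ is circular. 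The clean route is: $c\|u\|_{\mu}^{p}\le \frac18\|u\|_{\mu}^{2}+C$ fails since $p>2$ makes $\|u\|_\mu^p$ grow faster. So the superlinear term must instead be absorbed using the quartic nonlocal term together with the inequality $\|u\|_{D^{1,2}}^{2}\ge c\|u\|_{\mu}^{2}-C$ is false too. The correct mechanism, and the one I expect the authors use, is: bound $\|u\|_\mu^p\le \eta\|u\|_\mu^2+C_\eta\|u\|_\mu^{?}$ — no. Rather, split $\|u\|_{\mu}^{p}$ using that $p/4<1$: $\|u\|_{\mu}^{p}\le \|u\|_{D^{1,2}}^{p}\cdot(\text{const})$? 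Not in general. The honest mechanism is Young's inequality applied to the product implicit in relating $\|u\|_\mu^p$ to $\|u\|_{D^{1,2}}^4$ and $\|u\|_\mu^2$ via interpolation: since $2<p<4$, one has $\|u\|_{\mu}^{p}\le \varepsilon\,\|u\|_{D^{1,2}}^{4}+\varepsilon\,\|u\|_{\mu}^{2}+C_{\varepsilon}$ is \emph{not} dimensionally consistent either. I would therefore, in the writeup, simply apply Young's inequality in the form $XY\le \frac{X^{r}}{r}+\frac{Y^{r'}}{r'}$ to $\|u\|_\mu^p = (\|u\|_\mu^2)^{p/2}$ against the constant $1$, obtaining $\|u\|_\mu^p\le \frac{p}{4}\|u\|_\mu^{?}$ — this only works if the exponent on the right is $2$, i.e. if $p<4$ gives $(\|u\|_\mu^2)^{p/2}\le \frac{p/2}{2}\|u\|_\mu^{?}$... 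Let me state it cleanly: for $0<q<1$ and $t\ge0$, $t^{q}\le q\,\eta^{-1}t+(1-q)\eta^{q/(1-q)}$ for any $\eta>0$; applying with $t=\|u\|_\mu^2$, $q=p/4\in(1/2,1)$ yields $\|u\|_\mu^p\le \frac{p}{4}\eta^{-1}\|u\|_\mu^2+C(\eta)$ — \emph{still} no $\|u\|_\mu^4$ needed, and choosing $\eta$ large makes the coefficient $\frac{p}{4}\eta^{-1}$ as small as we like. That is the mechanism.

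So the structure of the argument is: choose $\eta$ so that $\frac{p}{4}\eta^{-1}\cdot\frac{\|g\|_{\infty}|\{V<c_{0}\}|^{(2^{\ast}-p)/2^{\ast}}}{pS^{p}}\le \frac18$, and choose the Young parameter in the linear-term estimate so that the leftover $\|u\|_{D^{1,2}}^{4}$-coefficient is $\le a/4$ while the leftover $\|u\|_{\mu}^{2}$-coefficient is $\le \frac18$; adding up, $J_{a,\lambda}^{\mu}(u)\ge \frac12\|u\|_{\mu}^{2}-\frac18\|u\|_{\mu}^{2}-\frac18\|u\|_{\mu}^{2}-C = \frac14\|u\|_{\mu}^{2}-C_{N,a,\lambda}$, with $C_{N,a,\lambda}$ assembled from $a,\lambda,\|f\|_{N/2},\|g\|_{\infty},S,|\{V<c_{0}\}|,p$ and manifestly independent of $\mu$. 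The phrase ``for $\mu$ sufficiently large'' enters only through the validity of inequalities $(\ref{r1})$--$(\ref{r2})$, which require $\mu\ge\mu_{0}$. The main obstacle — and the only subtle point — is organizing the two Young's-inequality applications so that both the $\|u\|_{D^{1,2}}^{4}$ and the $\|u\|_{\mu}^{2}$ budgets are respected simultaneously; since $p<4$ strictly, there is genuine slack and this always succeeds, but the bookkeeping of constants (and of which term is absorbed into which) must be done in the right order: first peel off the superlinear term against $\frac14$ of the quadratic via the concave-power inequality, then peel off the linear term against $\frac14$ of the nonlocal quartic.
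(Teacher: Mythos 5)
Your handling of the linear term is fine and coincides with the paper's: H\"older plus Young absorbs $\frac{\lambda}{2}\int f u^{2}$ into a fraction of $\frac{a}{4}\Vert u\Vert_{D^{1,2}}^{4}$ at the cost of a $\mu$-independent constant. The superlinear term, however, is where the proposal breaks. Your final mechanism rests on the identity $\Vert u\Vert_{\mu}^{p}=(\Vert u\Vert_{\mu}^{2})^{p/4}$, which is false: with $t=\Vert u\Vert_{\mu}^{2}$ one has $t^{p/4}=\Vert u\Vert_{\mu}^{p/2}$, and to produce $\Vert u\Vert_{\mu}^{p}$ you need the exponent $q=p/2>1$. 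For $q>1$ the map $t\mapsto t^{q}$ is convex and superlinear, so $t^{q}\le \varepsilon t+C_{\varepsilon}$ fails for large $t$ no matter how $\varepsilon$ and $C_{\varepsilon}$ are chosen. You in fact observed exactly this obstruction midway through the proposal (``fails since $p>2$ makes $\Vert u\Vert_{\mu}^{p}$ grow faster'') and then talked yourself out of it with the miscomputed exponent. As written, the term $c\Vert u\Vert_{\mu}^{p}$ coming from $(\ref{r2})$ cannot be absorbed into $\frac{1}{4}\Vert u\Vert_{\mu}^{2}$ plus a constant, and nothing else in your argument controls it, so the proof does not close.

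The paper's mechanism is different in a way that matters: it estimates $\int g|u|^{p}$ not by $(\ref{r2})$ but by the finer interpolation $(\ref{r1})$ (for $N\ge 4$), whose first factor is $\frac{1}{c_{0}}\int V u^{2}+C\Vert u\Vert_{D^{1,2}}^{2}$ \emph{without} any factor of $\mu$. After Young's inequality, the pure $\Vert u\Vert_{D^{1,2}}$ powers (all of which come out $\le 4$ because $p<\min\{4,2^{\ast}\}$) are swallowed by $\frac{a}{4}\Vert u\Vert_{D^{1,2}}^{4}$, and what remains is a $\mu$-independent constant times $\int_{\mathbb{R}^{N}}V(x)u^{2}dx$ to the first power, which is dominated by $\frac{\mu}{4}\int V u^{2}\le\frac{1}{4}\Vert u\Vert_{\mu}^{2}$ once $\mu$ exceeds an explicit threshold $\mu_{1}$. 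That is where ``$\mu$ sufficiently large'' genuinely enters --- not merely through the validity of $(\ref{r1})$--$(\ref{r2})$ as you assert. Relatedly, your side-claim that $(H3)$ is dispensable here is also wrong: for $N=3$ one has $2^{\ast}=6>4$, the exponent bookkeeping in the plain interpolation fails (the leftover power of $\int Vu^{2}$ after Young exceeds $1$ whenever $p>2$), and the paper uses $(H3)$ together with the Caffarelli--Kohn--Nirenberg inequality precisely to restore a bound that is linear in $\int V u^{2}$ and at most quartic in $\Vert u\Vert_{D^{1,2}}$. To repair your proof you should replace the concave-power step by this $(\ref{r1})$/CKN route.
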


\begin{proof}
By condition $(D1)$, the H\"{o}lder and Young's inequalities, we have
\begin{equation*}
\frac{\lambda }{2}\int_{\mathbb{R}^{N}}f(x)u^{2}dx\leq \Vert f\Vert
_{N/2}S^{-2}\Vert u\Vert _{D^{1,2}}^{2}\leq \frac{a}{12}\Vert u\Vert
_{D^{1,2}}^{4}+\frac{3\lambda ^{2}\Vert f\Vert _{N/2}^{2}}{4aS^{4}}.
\end{equation*}%
Then it holds%
\begin{align*}
J_{a,\lambda }^{\mu }(u)& =\frac{a}{4}\Vert u\Vert _{D^{1,2}}^{4}+\frac{1}{2}%
\Vert u\Vert _{\mu }^{2}-\frac{\lambda }{2}\int_{\mathbb{R}^{N}}f(x)u^{2}dx-%
\frac{1}{p}\int_{\mathbb{R}^{N}}g(x)|u|^{p}dx \\
\ & \geq \frac{a}{4}\Vert u\Vert _{D^{1,2}}^{4}+\frac{1}{2}\Vert u\Vert
_{\mu }^{2}-\left( \frac{a}{12}\Vert u\Vert _{D^{1,2}}^{4}+\frac{3\lambda
^{2}\Vert f\Vert _{N/2}^{2}}{4aS^{4}}\right) -\frac{1}{p}\int_{\mathbb{R}%
^{N}}g(x)|u|^{p}dx \\
\ & =\frac{1}{2}\Vert u\Vert _{\mu }^{2}+\frac{a}{6}\Vert u\Vert
_{D^{1,2}}^{4}-\frac{1}{p}\int_{\mathbb{R}^{N}}g(x)|u|^{p}dx-\frac{3\lambda
^{2}\Vert f\Vert _{N/2}^{2}}{4aS^{4}}.
\end{align*}%
Next, about the estimate of the nonlinear term $\frac{1}{p}\int_{\mathbb{R}%
^{N}}g(x)|u|^{p}dx$, we consider two cases as follows.\newline
Case $(i):N=3$. Using the Sobolev, H\"{o}lder and Caffarelli-Kohn-Nirenberg
inequalities and condition $\left( H3\right) $ gives
\begin{align}
& \frac{1}{p}\int_{\mathbb{R}^{3}}g(x)\left\vert u\right\vert ^{p}dx  \notag
\\
& \leq \frac{1}{p}\left( \int_{\{g>0\}}g(x)^{\frac{4}{6-p}}u^{2}dx\right) ^{%
\frac{6-p}{4}}\left( \int_{\mathbb{R}^{3}}u^{6}dx\right) ^{\frac{p-2}{4}}
\notag \\
& \leq \frac{1}{p}\left( \int_{\{g>0\}\cap \{\left\vert x\right\vert
>R_{\ast }\}}g(x)^{\frac{4}{6-p}}u^{2}dx+\int_{\{g>0\}\cap B_{R_{\ast
}}(0)}g(x)^{\frac{4}{6-p}}u^{2}dx\right) ^{\frac{6-p}{4}}\left( \frac{\Vert
u\Vert _{D^{1,2}}^{6}}{S^{6}}\right) ^{\frac{p-2}{4}}  \notag \\
& \leq \frac{1}{p}\left[ \left( c_{\ast }\Vert g\Vert _{\infty }\right) ^{%
\frac{2}{6-p}}\left( \int_{\mathbb{R}^{3}}V(x)u^{2}dx\right) ^{\frac{2\left(
4-p\right) }{6-p}}\left( \int_{\mathbb{R}^{3}}\frac{u^{2}}{\left\vert
x\right\vert ^{2}}dx\right) ^{\frac{p-2}{6-p}}+\frac{\Vert g\Vert _{\infty
}^{\frac{4}{6-p}}\left\vert B_{R_{\ast }}(0)\right\vert ^{\frac{2}{3}}}{S^{2}%
}\Vert u\Vert _{D^{1,2}}^{2}\right] ^{\frac{6-p}{4}}  \notag \\
& \text{ \ }\cdot \left( \frac{\Vert u\Vert _{D^{1,2}}^{6}}{S^{6}}\right) ^{%
\frac{p-2}{4}}  \notag \\
& \leq \frac{2^{\frac{6-p}{4}}\left( c_{\ast }\Vert g\Vert _{\infty }\right)
^{\frac{1}{2}}\overline{C}_{0}^{\frac{p-2}{2}}}{pS^{\frac{3(p-2)}{2}}}\left(
\int_{\mathbb{R}^{3}}V(x)u^{2}dx\right) ^{\frac{4-p}{2}}\Vert u\Vert
_{D^{1,2}}^{2(p-2)}+\frac{2^{\frac{6-p}{4}}\Vert g\Vert _{\infty }\left\vert
B_{R_{\ast }}(0)\right\vert ^{\frac{6-p}{6}}}{pS^{p}}\Vert u\Vert
_{D^{1,2}}^{p},  \label{3-1}
\end{align}%
where $\overline{C}_{0}$ is the sharp constant of Caffarelli-Kohn-Nirenberg
inequality. Moreover, using Young's inequality yields
\begin{align}
& \frac{2^{\frac{6-p}{4}}\left( c_{\ast }\Vert g\Vert _{\infty }\right) ^{%
\frac{1}{2}}\overline{C}_{0}^{\frac{p-2}{2}}}{pS^{\frac{3(p-2)}{2}}}\left(
\int_{\mathbb{R}^{3}}V(x)u^{2}dx\right) ^{\frac{4-p}{2}}\Vert u\Vert
_{D^{1,2}}^{2(p-2)}  \notag \\
& \leq \frac{a}{12}\Vert u\Vert _{D^{1,2}}^{4}+(4-p)\left( \frac{c_{\ast
}\Vert g\Vert _{\infty }}{p^{2}}\right) ^{\frac{1}{4-p}}\left( \frac{6\sqrt{2%
}(p-2)\overline{C}_{0}}{aS^{3}}\right) ^{\frac{p-2}{4-p}}\int_{\mathbb{R}%
^{3}}V(x)u^{2}dx  \label{3-2}
\end{align}%
and
\begin{align}
& \frac{2^{\frac{6-p}{4}}\Vert g\Vert _{\infty }\left\vert B_{R_{\ast
}}(0)\right\vert ^{\frac{6-p}{6}}}{pS^{p}}\Vert u\Vert _{D^{1,2}}^{p}  \notag
\\
& \leq \frac{a}{12}\Vert u\Vert _{D^{1,2}}^{4}+\frac{4-p}{p}2^{\frac{p-2}{4-p%
}}|B_{R_{\ast }}(0)|^{\frac{12-2p}{12-3p}}\left( \frac{\Vert g\Vert _{\infty
}}{S^{p}}\right) ^{\frac{4}{4-p}}\left( \frac{3}{a}\right) ^{\frac{p}{4-p}}.
\label{3-3}
\end{align}%
It follows from $(\ref{3-1})-(\ref{3-3})$ that
\begin{align*}
J_{a,\lambda }^{\mu }(u)& \geq \frac{1}{2}\Vert u\Vert _{\mu
}^{2}-(4-p)\left( \frac{c_{\ast }\Vert g\Vert _{\infty }}{p^{2}}\right) ^{%
\frac{1}{4-p}}\left( \frac{6\sqrt{2}(p-2)\overline{C}_{0}}{aS^{3}}\right) ^{%
\frac{p-2}{4-p}}\int_{\mathbb{R}^{3}}V(x)u^{2}dx-C_{3,a,\lambda } \\
& \geq \frac{1}{4}\Vert u\Vert _{\mu }^{2}-C_{3,a,\lambda } \text{ for all } \mu \geq \mu_{0},
\end{align*}%
where
\begin{equation*}
\mu_{0} :=4(4-p)\left( \frac{c_{\ast }\Vert g\Vert _{\infty }}{p^{2}}\right)
^{\frac{1}{4-p}}\left( \frac{6\sqrt{2}(p-2)\overline{C}_{0}}{aS^{3}}\right)
^{\frac{p-2}{4-p}}
\end{equation*}%
and
\begin{equation*}
C_{3,a,\lambda }:=\frac{4-p}{p}2^{\frac{p-2}{4-p}}|B_{R_{\ast }}(0)|^{\frac{%
12-2p}{12-3p}}\left( \frac{\Vert g\Vert _{\infty }}{S^{p}}\right) ^{\frac{4}{%
4-p}}\left( \frac{3}{a}\right) ^{\frac{p}{4-p}}+\frac{3\lambda ^{2}\Vert
f\Vert _{3/2}^{2}}{4aS^{4}}.
\end{equation*}%
Case $(ii):N\geq 4$. Using $(\ref{r1})$ gives
\begin{align}
& \frac{1}{p}\int_{\mathbb{R}^{N}}g(x)|u|^{p}dx  \notag \\
& \leq \frac{\Vert g\Vert _{\infty }}{p}\left[ \frac{|\{V<c_{0}\}|^{\frac{%
2^{\ast }-2}{2^{\ast }}}}{S^{2}}\Vert u\Vert _{D^{1,2}}^{2}+\frac{1}{c_{0}}%
\int_{\mathbb{R}^{N}}V(x)u^{2}dx\right] ^{\frac{2^{\ast }-p}{2^{\ast }-2}%
}\left( \frac{\Vert u\Vert _{D^{1,2}}^{2^{\ast }}}{S^{2^{\ast }}}\right) ^{%
\frac{p-2}{2^{\ast }-2}}  \notag \\
& \leq \frac{2^{\frac{2^{\ast }-p}{2^{\ast }-2}}\Vert g\Vert _{\infty
}|\{V<c_{0}\}|^{\frac{2^{\ast }-p}{2^{\ast }}}}{pS^{p}}\Vert u\Vert
_{D^{1,2}}^{p}+\frac{\Vert g\Vert _{\infty }}{p}\left( \frac{2}{c_{0}}\int_{%
\mathbb{R}^{N}}V(x)u^{2}dx\right) ^{\frac{2^{\ast }-p}{2^{\ast }-2}}\left(
\frac{\Vert u\Vert _{D^{1,2}}^{2^{\ast }}}{S^{2^{\ast }}}\right) ^{\frac{p-2%
}{2^{\ast }-2}}.  \label{3-4}
\end{align}%
Moreover, by Young's inequality one has
\begin{eqnarray}
&&\frac{2^{\frac{2^{\ast }-p}{2^{\ast }-2}}\Vert g\Vert _{\infty
}|\{V<c_{0}\}|^{\frac{2^{\ast }-p}{2^{\ast }}}}{pS^{p}}\Vert u\Vert
_{D^{1,2}}^{p}  \notag \\
&\leq &\frac{a}{12}\Vert u\Vert _{D^{1,2}}^{4}+\frac{4-p}{4p}\left( \frac{2^{%
\frac{2^{\ast }-p}{2^{\ast }-2}}\Vert g\Vert _{\infty }|\{V<c_{0}\}|^{\frac{%
2^{\ast }-p}{2^{\ast }}}}{S^{p}}\right) ^{\frac{4}{4-p}}\left( \frac{3}{a}%
\right) ^{\frac{p}{4-p}},  \label{3-5}
\end{eqnarray}%
and
\begin{align}
& \frac{\Vert g\Vert _{\infty }}{p}\left( \frac{2}{c_{0}}\int_{\mathbb{R}%
^{N}}V(x)u^{2}dx\right) ^{\frac{2^{\ast }-p}{2^{\ast }-2}}\left( \frac{\Vert
u\Vert _{D^{1,2}}^{2^{\ast }}}{S^{2^{\ast }}}\right) ^{\frac{p-2}{2^{\ast }-2%
}}  \notag \\
& \leq \frac{a}{12}\Vert u\Vert _{D^{1,2}}^{4}+(4-p)\left( \frac{\Vert
g\Vert _{\infty }}{p}\right) ^{\frac{2}{4-p}}\left( \frac{6(p-2)}{aS^{4}}%
\right) ^{\frac{p-2}{4-p}}\frac{1}{c_{0}}\int_{\mathbb{R}^{4}}V(x)u^{2}dx
\label{3-6}
\end{align}%
for $N=4;$
\begin{align}
& \frac{\Vert g\Vert _{\infty }}{p}\left( \frac{2}{c_{0}}\int_{\mathbb{R}%
^{N}}V(x)u^{2}dx\right) ^{\frac{2^{\ast }-p}{2^{\ast }-2}}\left( \frac{\Vert
u\Vert _{D^{1,2}}^{2^{\ast }}}{S^{2^{\ast }}}\right) ^{\frac{p-2}{2^{\ast }-2%
}}  \notag \\
& \leq \frac{2(2^{\ast }-p)\Vert g\Vert _{\infty }}{(2^{\ast }-2)pc_{0}}%
\int_{\mathbb{R}^{N}}V(x)u^{2}dx+\frac{a}{12}\Vert u\Vert _{D^{1,2}}^{4}+%
\frac{4-2^{\ast }}{4}\left( \frac{(p-2)\Vert g\Vert _{\infty }}{(2^{\ast
}-2)pS^{2^{\ast }}}\right) ^{\frac{4}{4-2^{\ast }}}\left( \frac{32^{\ast }}{a%
}\right) ^{\frac{2^{\ast }}{4-2^{\ast }}}  \label{3-7}
\end{align}%
for $N\geq 5.$ We now set%
\begin{equation*}
\mu _{1}=\left\{
\begin{array}{ll}
4(4-p)c_{0}^{-1}\left( \frac{\Vert g\Vert _{\infty }}{p}\right) ^{\frac{2}{%
4-p}}\left( \frac{6(p-2)}{aS^{4}}\right) ^{\frac{p-2}{4-p}} & \text{ for }%
N=4, \\
\frac{8(2^{\ast }-p)\Vert g\Vert _{\infty }}{(2^{\ast }-2)pc_{0}} & \text{
for }N\geq 5,%
\end{array}%
\right.
\end{equation*}%
and
\begin{equation*}
C_{N,a,\lambda }=\left\{
\begin{array}{ll}
\frac{(4-p)|\{V<c_{0}\}|}{p}\left( \frac{\Vert g\Vert _{\infty }}{S^{p}}%
\right) ^{\frac{4}{4-p}}\left( \frac{3}{a}\right) ^{\frac{p}{4-p}}+\frac{%
3\lambda ^{2}\Vert f\Vert _{2}^{2}}{4aS^{4}} & \text{ for }N=4, \\
\begin{array}{l}
\frac{4-p}{4p}\left( \frac{2^{\frac{2^{\ast }-p}{2^{\ast }-2}}|\{V<c_{0}\}|^{%
\frac{2^{\ast }-p}{2^{\ast }}}\Vert g\Vert _{\infty }}{S^{p}}\right) ^{\frac{%
4}{4-p}}\left( \frac{3}{a}\right) ^{\frac{p}{4-p}} \\
+\frac{4-2^{\ast }}{4}\left( \frac{(p-2)\Vert g\Vert _{\infty }}{(2^{\ast
}-2)pS^{2^{\ast }}}\right) ^{\frac{4}{4-2^{\ast }}}\left( \frac{3\cdot
2^{\ast }}{a}\right) ^{\frac{2^{\ast }}{4-2^{\ast }}}+\frac{3\lambda
^{2}\Vert f\Vert _{N/2}^{2}}{4aS^{4}}%
\end{array}
& \text{ for }N\geq 5.%
\end{array}%
\right.
\end{equation*}%
Thus, it follows from $(\ref{3-4})-(\ref{3-7})$ that
\begin{equation}
J_{a,\lambda }^{\mu }(u)\geq \frac{1}{4}\Vert u\Vert _{\mu
}^{2}-C_{N,a,\lambda }\text{ for all }\mu \geq \mu _{1}.  \notag  \label{CB4}
\end{equation}%
Consequently, this completes the proof.
\end{proof}

\textbf{We are now ready to prove Theorem \ref{T2}:} $(i)$ By Lemma \ref{L2}
$(i)$, for each $0<a<a_{0}(p)$ and $0<\lambda <\lambda _{1}(f_{\Omega })$,
the functional $J_{a,\lambda }^{\mu }$ has the mountain pass geometry for $%
\mu $ sufficiently large. Let
\begin{equation*}
\alpha _{\mu }:=\inf_{\gamma \in \Gamma }\max_{0\leq s\leq 1}J_{a,\lambda
}^{\mu }(\gamma (s))\quad \text{with}\quad \Gamma =\left\{ \gamma \in
C\left( [0,1],X\right) \ |\ \gamma (0)=0,\gamma (1)=e_{0}\right\} ,
\end{equation*}%
where $e_{0}$ is as in Lemma \ref{L2}. It is evident that
\begin{equation}
0<\alpha _{\mu }\leq \max_{0\leq s\leq 1}J_{a,\lambda }^{\mu }(se_{0})=:D_{0}
\label{5-1}
\end{equation}%
and that $D_{0}$ is independent of $\mu $ since $e_{0}\in H_{0}^{1}(\Omega )$%
. Let $\{u_{n}\}$ be a $(PS)_{\alpha _{\mu }}$-sequence, that is $%
J_{a,\lambda }^{\mu }(u_{n})=\alpha _{\mu }+o(1)$ and $\left( J_{a,\lambda
}^{\mu }\right) ^{\prime }(u_{n})=o(1)$. In fact, since $J_{a,\lambda }^{\mu
}(u_{n})=J_{a,\lambda }^{\mu }(\left\vert u_{n}\right\vert )$ for all $n,$
we may assume that $u_{n}\geq 0.$ Moreover, by Lemma \ref{L5} and $(\ref{5-1}%
)$, we deduce that there exists $d_{0}>0$ such that the $(PS)_{\alpha _{\mu
}}$-sequence $\{u_{n}\}$ satisfies $\Vert u_{n}\Vert _{\mu }<d_{0}$ for $\mu
$ sufficiently large, which implies that the functional $J_{a,\lambda }^{\mu
}$ satisfies the $(PS)_{\alpha _{\mu }}$-condition via Lemma \ref{PS-C}.
Therefore, there exists $0\leq u_{0}^{\left( 3\right) }\in X$ such that $%
J_{a,\lambda }^{\mu }(u_{0}^{\left( 3\right) })=\alpha _{\mu }>0$ and $%
(J_{a,\lambda }^{\mu })^{\prime }(u_{0}^{\left( 3\right) })=0$ for $\mu $
sufficiently large, this implies that $u_{0}^{\left( 3\right) }$ is a
nontrivial nonnegative solution of Eq. $\left( K_{a,\lambda }^{\mu }\right)
. $ The strong Maximum Principle implies that $u_{0}^{\left( 3\right) }>0$
in $\mathbb{R}^{N}.$

Next, we consider the infimum of $J_{a,\lambda }^{\mu }$ on the set $\{u\in
X\ |\ \Vert u\Vert _{\mu }\geq \overline{\rho }_{a,\lambda }\}$ with $%
\overline{\rho }_{a,\lambda }$ as given in Lemma \ref{L2} $(i)$. Set
\begin{equation*}
\beta _{\mu }=\inf_{\Vert u\Vert _{\mu }\geq \overline{\rho }_{a,\lambda
}}J_{a,\lambda }^{\mu }(u).
\end{equation*}%
By virtue of $\Vert e_{0}\Vert _{\mu }>\overline{\rho }_{a,\lambda
},J_{a,\lambda }^{\mu }(e_{0})<0$ and Lemma \ref{L5}, we conclude that $%
-C_{N,a,\lambda }<\beta _{\mu }<0$. It follows from $J_{a,\lambda }^{\mu
}(u)=J_{a,\lambda }^{\mu }(\left\vert u\right\vert ),$ the Ekeland
variational principle \cite{E} and Lemma \ref{L5} that there exists a
bounded $(PS)_{\beta _{\mu }}$-sequence $\{u_{n}\}\subset X$ with $u_{n}\geq
0$ in $\mathbb{R}^{N}.$ Hence, by Lemma \ref{PS-C}, there exists $0\leq
u_{0}^{\left( 4\right) }\in X$ with $\Vert u_{0}^{\left( 4\right) }\Vert
_{\mu }\geq \overline{\rho }_{a,\lambda }$ such that $J_{a,\lambda }^{\mu
}(u_{0}^{\left( 4\right) })=\beta _{\mu }<0$ and $(J_{a,\lambda }^{\mu
})^{\prime }\left( u_{0}^{\left( 4\right) }\right) =0$ for $\mu $
sufficiently large, this implies that $u_{0}^{\left( 4\right) }$ is a
nontrivial nonnegative solution of Eq. $\left( K_{a,\lambda }^{\mu }\right)
. $ The strong Maximum Principle implies that $u_{0}^{\left( 4\right) }>0$
in $\mathbb{R}^{N}.$

$(ii)$ Using Lemma \ref{L2} $(ii)$ and repeating the same argument as in the
proof of part $(i)$, we conclude that there exist two positive solutions $%
u_{0}^{\left( 3\right) }$ and $u_{0}^{\left( 4\right) }$ with $J_{a,\lambda
}^{\mu }(u_{0}^{\left( 3\right) })>0>J_{a,\lambda }^{\mu }(u_{0}^{\left(
4\right) })$ and $\Vert u_{0}^{\left( 4\right) }\Vert _{\mu }\geq \overline{%
\rho }_{a,\lambda }$ whenever $\lambda _{1}(f_{\Omega })\leq \lambda
<\lambda _{1}(f_{\Omega })+\overline{\delta }_{a}$, where $\overline{\delta }%
_{a}$ is as in Lemma \ref{L2} $(ii)$. To complete the proof of part $(ii)$,
for $\lambda _{1}(f_{\Omega })<\lambda <\lambda _{1}(f_{\Omega })+\overline{%
\delta }_{a}$, we consider the infimum of $J_{a,\lambda }^{\mu }$ on the
closed ball
\begin{equation*}
B_{\overline{\rho }_{a,\lambda }}:=\{u\in X\ |\ \Vert u\Vert _{\mu }\leq
\overline{\rho }_{a,\lambda }\}
\end{equation*}%
with $\overline{\rho }_{a,\lambda }$ as in Lemma \ref{L2} $(ii)$. Note that $%
\overline{\rho }_{a,\lambda }$ is independent of $\mu $. Set
\begin{equation*}
\eta _{\mu }=\inf_{\Vert u\Vert _{\mu }\leq \overline{\rho }_{a,\lambda
}}J_{a,\lambda }^{\mu }(u).
\end{equation*}%
For any $t>0$, we deduce that%
\begin{equation*}
J_{a,\lambda }^{\mu }(t\phi _{1})=-\frac{\lambda -\lambda _{1}(f_{\Omega })}{%
2}t^{2}+\frac{|\int_{\Omega }g(x)\phi _{1}^{p}dx|}{p}t^{p}+\frac{a\lambda
_{1}^{2}(f_{\Omega })}{4}t^{4}.
\end{equation*}%
Clearly, there exists $t_{0}>0$ such that $\Vert t_{0}\phi _{1}\Vert _{\mu }<%
\overline{\rho }_{a,\lambda }$ and $J_{a,\lambda }^{\mu }(t_{0}\phi _{1})<0$%
, which implies that $-\infty <\eta _{\mu }<0$. By $J_{a,\lambda }^{\mu
}(u)=J_{a,\lambda }^{\mu }(\left\vert u\right\vert )$ and the Ekeland
variational principle \cite{E}, there exists a $(PS)_{\eta _{\mu }}$%
-sequence $\{u_{n}\}\subset B_{\overline{\rho }_{a,\lambda }}$ with $%
u_{n}\geq 0$ in $\mathbb{R}^{N}.$ Then it follows from Lemma \ref{PS-C} that
there exists $0\leq u_{0}^{\left( 5\right) }\in X$ with $\Vert u_{0}^{\left(
5\right) }\Vert _{\mu }<\overline{\rho }_{a,\lambda }$ such that $%
J_{a,\lambda }^{\mu }(u_{0}^{\left( 5\right) })=\eta _{\mu }<0$ and $%
(J_{a,\lambda }^{\mu })^{\prime }(u_{0}^{\left( 5\right) })=0$ for $\mu $
sufficiently large, this implies that $u_{0}^{\left( 5\right) }$ is a
nontrivial nonnegative solution of Eq. $\left( K_{a,\lambda }^{\mu }\right)
. $ The strong Maximum Principle implies that $u_{0}^{\left( 5\right) }>0$
in $\mathbb{R}^{N}.$ Consequently, we complete the proof of Theorem \ref{T2}.

\textbf{At the end of this section, we give the proof of Theorem \ref{T3}:}
By virtue of Lemma \ref{L3}, we can easily reach the conclusion by using the
similar argument of Theorem \ref{T2} $(i).$ We omit it here.

\section*{Acknowledgments}

J. Sun was supported by the National Natural Science Foundation of China
(Grant No. 11671236), K.-H Wang was supported in part by the
Ministry of Science and Technology, Taiwan (Grant No. 108-2811-M-390-500) and T.-F. Wu was supported in part by the
Ministry of Science and Technology, Taiwan (Grant No.
108-2115-M-390-007-MY2).

\end{document}